\documentclass{amsart}
\usepackage[mathscr]{eucal}
\usepackage{graphicx}
\usepackage{amscd}
\usepackage{amsmath}
\usepackage{amsthm}
\usepackage{amsxtra}
\usepackage{mathtools}
\usepackage{calc} 
\usepackage{amsfonts}
\usepackage{amssymb}
\usepackage{latexsym}
\usepackage{pdfsync}
\usepackage[all]{xy}

\usepackage[colorlinks, bookmarks=true]{hyperref}

%\definecolor{myyellow}{RGB}{255, 246, 196}

\newtheorem{theorem}{Theorem}[section]
\theoremstyle{plain}

\newtheorem{corollary}[theorem]{Corollary}
\newtheorem{corollary-definition}[theorem]{Corollary-Definition}

\newtheorem{definition}[theorem]{Definition}

\newtheorem{lemma}[theorem]{Lemma}

\newtheorem{proposition}[theorem]{Proposition}

\numberwithin{equation}{section}

\newcommand{\blank}{\hspace{0.04cm} \rule{2.4mm}{.4pt} \hspace{0.04cm} }

\DeclareMathOperator{\ot}{\overset{\rightharpoondown}{\otimes}}
\DeclareMathOperator{\otleft}{\overset{\leftharpoondown}{\otimes}}
\DeclareMathOperator{\oh}{\overline{\mathrm{Hom}}}

\theoremstyle{definition}
\newtheorem{example}[theorem]{Example}
\newtheorem{remark}[theorem]{Remark}

\newcommand{\Ker}{\mathrm{Ker}\,}

\newcommand{\Hom}{\mathrm{Hom}\,}
\DeclareMathOperator{\Ext}{\mathrm{Ext}}
\newcommand{\Tor}{\mathrm{Tor}}

\newcommand{\Z}{\mathbb{Z}\,}
\newcommand{\Q}{\mathbb{Q}\,}

\newcommand{\ev}{\textrm{ev}}
\newcommand{\Tr}{\mathrm{Tr}\,}

\newcommand{\lra}{\longrightarrow}

\newcommand{\Mod}{\mathrm{Mod}\,}
\newcommand{\injt}{\mathfrak{s}}
\newcommand{\injc}{\mathfrak{q}}

%  Categories

\newcommand{\fp}{\mathrm{fp}}

\newcommand{\ab}{\mathrm{Ab}\,}

% Diagrams

\usepackage{pgfpages}
\usepackage{tikz}
\usetikzlibrary{matrix,arrows,decorations,backgrounds}

\newcounter{hours}
\newcounter{minutes}

\begin{document}
%\blendcolors{!60!yellow}
%
%\pagecolor{myyellow}
%
%\pagecolor{red!1!green!1!blue!0.76!}

\title[]{Injective stabilization of additive functors. II. (Co)torsion and the Auslander-Gruson-Jensen functor}

\author{Alex Martsinkovsky}
\address{Mathematics Department\\
Northeastern University\\
Boston, MA 02115, USA}
\email{a.martsinkovsky@northeastern.edu}
%\urladdr{}
\author{Jeremy Russell}
\address{Department of Mathematics\\
Rowan University\\
201 Mullica Hill Rd, Glassboro, NJ 08028, USA}
\email{russelljj@rowan.edu}

%\urladdr{}
\thanks{The first author was supported in part by the Shota Rustaveli National Science Foundation of Georgia Grant NFR-18-10849}
\date{\today, \setcounter{hours}{\time/60} \setcounter{minutes}{\time-\value
{hours}*60} \thehours\,h\ \theminutes\,min}
\subjclass[2010]{Primary: 16E30 ; Secondary: }
\keywords{Additive functor, injective stabilization, projective stabilization, classical torsion, Bass torsion functor, injective torsion, torsion submodule of a module, cotorsion quotient module of a module, torsion module, torsion-free module, cotorsion module, cotorsion-free module,  Auslander-Gruson-Jensen functor, filtered colimits, finitely presented functor, reject of flats, trace of injectives, cosatellite, pp-functors, pure injective, absolutely pure module, artin algebra, character module.}
\dedicatory{In memory of Gena Puninski}

\begin{abstract}
The formalism of injective stabilization of additive functors is used to define a new notion of the torsion submodule of a module. It applies to arbitrary modules over arbitrary rings. For arbitrary modules over commutative domains it coincides with the classical torsion, and for finitely presented modules over arbitrary rings it coincides with the \textcolor{purple}{Bass torsion}. A formally dual approach -- based on projective stabilization --  gives rise to a new concept: the cotorsion quotient module of a module. This is done  in complete generality -- the new concept is defined for any module over any ring. Unlike torsion, cotorsion does not have classical prototypes.

General properties of these constructs are established. It is shown that  the Auslander-Gruson-Jensen functor applied to the cotorsion functor returns the torsion functor. \textcolor{purple}{As a consequence, a ring is one-sided absolutely pure if and only if each pure injective on the other side is cotorsion-free.} If the injective envelope of the ring is finitely presented, then the right adjoint of the Auslander-Gruson-Jensen functor applied to the torsion functor returns the cotorsion functor. This correspondence establishes a duality between torsion and cotorsion over such rings. In particular, this duality applies to artin algebras. \textcolor{purple}{It is also shown that, over any ring, the character module of the torsion of a module is isomorphic to the cotorsion of the character module of the module. Under various finiteness conditions on the injective envelope of the ring, the derived functors of torsion and cotorsion are computed.}
\end{abstract}

\maketitle
\tableofcontents

\section{Introduction}

This paper is second in a series of three dealing with the injective stabilization of the tensor product and other additive functors. Part~I~\cite{MR-1} provided basic results on this construct, and now we give first applications --  definitions of the torsion submodule of a module and of the cotorsion quotient module of a module. One of the main points here is that this is done in complete generality, without any restrictions on rings or modules. For ease of reference, we sporadically (and somewhat inconsistently) refer to the new torsion as the ``injective torsion''; more often though we simply use the term ``torsion''.

In Section~\ref{S:torsion}, we define the torsion submodule of a module as the injective stabilization of the tensor product with the module, evaluated at the ring \textcolor{purple}{viewed as a module on the other side}, and establish a dozen or so expected properties of this operation. \textcolor{purple}{This is followed up by} definitions of torsion modules and of torsion-free modules. Over commutative domains the injective torsion coincides with the classical torsion, and for finitely presented modules over arbitrary rings it coincides with the Bass torsion, defined at each component as the kernel of the canonical map from the module to its bidual~\cite{B-60}. While the Bass torsion is defined over arbitrary rings and, for finitely presented (or, even, finite) modules over commutative domains, coincides with the classical torsion, this is no longer true for infinite modules. Thus the injective torsion is in general different from  the Bass torsion. However, the former is always contained in the latter, and it turns out that the injective torsion functor commutes with filtered colimits. In fact, it is the largest subfunctor of the Bass torsion functor commuting with \textcolor{purple}{filtered} colimits. As an application, we show that the injective torsion is a radical: the torsion of the module modulo its torsion submodule is zero. If the injective envelope of the ring is flat, then the torsion subfunctor is idempotent, and the torsion-free class is closed under extensions. \textcolor{purple}{This need no longer be true without the flatness assumption (see Remark~\ref{R:low-torsion}). Said differently, over a general ring, there is no torsion theory for which the class of torsion modules is the corresponding torsion class. As a consequence of the functorial definition of torsion, we show that over rings for which torsion splits off, it cannot split off functorially, unless it is trivial.}

In Section~\ref{S:cotorsion} we introduce the notions of the cotorsion quotient module of a module and of the cotorsion-free submodule of a module. \textcolor{purple}{This leads to the corresponding notions of the cotorsion class and the cotorsion-free class. The reader is warned that these classes have little, if anything, to do with the notions of cotorsion theories (or cotorsion pairs). The underlying philosophy in this paper is that cotorsion should be a notion ``dual'' to torsion and \texttt{should thus be a coradical}, i.e., a radical on the opposite category. Notice that the categorical dual of a torsion theory in the sense of Dickson is again a torsion theory. The injective torsion is not a torsion theory, but a simple reversal of the arrows (this amounts to passing to the opposite category) produces just a ``mirror image'' of the torsion and doesn't seem to give any new insights coming from the opposite category. However, there is a more sophisticated correspondence, based on the comparison between the injective stabilization of the tensor product and the projective stabilization of the contravariant Hom functor, that indeed yields a new construct.} Our concept of cotorsion is developed in similarity with our definition of torsion, except that we were not able to find any prior attempts, even over commutative domains, to define \texttt{the cotorsion module of a module}. There have been attempts to define cotorsion modules and cotorsion-free modules in various degrees of generality. However, a quick review of the literature reveals considerable variation in the end results. We remark that, for some of the prior definitions, the cotorsion class contains both cotorsion and cotorsion-free modules in our sense; \textcolor{purple}{see Remarks~\ref{R:cot-Matlis} and~\ref{R:EJ} for details. Similar to torsion, we also show that over rings for which cotorsion splits off it cannot split off functorially, unless it is trivial.}
 
In Section~\ref{S:duality}, we  show that the functor discovered (independently) by Gruson-Jensen and Auslander \textcolor{purple}{sends cotorsion into torsion, thereby converting a metamathematical analogy between cotorsion and injective torsion into a mathematical statement. As a consequence, we show that a ring is absolutely pure on one side if and only if every pure injective on the other side is cotorsion-free.}

\textcolor{purple}{For algebras over commutative rings, we utilize the extension of the Auslander-Reiten formula proved in Part I~\cite[Proposition 9.7 and Remark 9.8]{MR-1}, 
\[
 D_{\mathbf{J}}(A\ot B) \simeq \overline{\Hom} (B, D_{\mathbf{J}}(A)), 
\]
which holds for arbitrary modules $A$ and $B$, to provide additional connections between torsion and cotorsion. Here the ring is an algebra over a commutative coefficient ring $R$, $\mathbf{J}$ is an injective $R$-module, and $D_{\mathbf{J}} := \Hom_{R} (\blank, \mathbf{J})$. In particular, specializing to integer coefficients and $B := \Lambda$, we have that the character module of the torsion of $A$ is isomorphic to the cotorsion of the character module of $A$: 
\[
\injt(A)^{+} \simeq \injc(A^{+})
\]
}

\textcolor{purple}{In Section 5, we look at the properties of torsion and cotorsion under various finiteness assumptions on the injective envelope of the ring. This allows to completely determine all derived functors of torsion and cotorsion.} In the case the injective envelope of the ring is finitely presented, we show that the right adjoint of the Auslander-Gruson-Jensen functor converts torsion back into cotorsion, thus establishing a duality between the two concepts. In particular, this duality holds over artin algebras.

The terminology and notation used in this part are the same as in the first paper~\cite{MR-1} of the series.

\textcolor{purple}{The authors are grateful to the referee for their pointed comments and encouraging suggestions, as well as shared insights. As a result, the authors have made a significant effort to make the paper more ``user-friendly''.}

\section{Torsion over arbitrary rings}\label{S:torsion}
%\subsection{Torsion via the injective stabilization of the univariate tensor product}

In this section, our goal is to extend the classical torsion, originally defined for modules over commutative domains, to arbitrary modules over arbitrary rings. Let $\Lambda$ be a ring and $A$ a right $\Lambda$-module. We return to the injective stabilization $A \ot \blank$ of $A \otimes \blank$ and want to take a closer look at its $\Lambda$-component. \textcolor{purple}{Recall that the injective stabilization $\overline{F}$ of an additive covariant functor $F$ on a module category (or, more generally, on an abelian category with enough injectives) is the kernel of the natural transformation $F \to R^{0}F$, where $R^{0}F$ is the zeroth right-derived functor of $F$. 
Thus we have a defining exact sequence 
\[
0 \lra \overline{F} \lra F \lra R^{0}F.
\]
Equivalently, $\overline{F}$ can be defined as the largest subfunctor of $F$ vanishing on injectives. Given a module $B$, the value of 
$\overline{F}$ on $B$ can also be determined as follows. Take the injective envelope $\iota : B \to I$, apply $F$, and take the kernel of $F(\iota)$. Thus we have yet another defining exact sequence
\[
0 \lra \overline{F}(B) \lra F(B) \overset{F(\iota)}\lra F(I).
\]
In the case $F := A \otimes \blank$, this becomes
\[
0 \lra A \ot B \lra A \otimes B \lra A \otimes I.
\]
}
Now recall that if $\Lambda$ is a commutative domain and $K$ is its field of fractions, then the classical torsion of $A$ coincides with the kernel of the canonical \textcolor{purple}{localization} map $A \to A \otimes K$. But $K$, being divisible and torsion-free, is also the injective envelope of $\Lambda$, and therefore the kernel  \textcolor{purple}{of the localization map} is isomorphic to the injective stabilization of the functor $A \otimes \blank$ evaluated at~$\Lambda$. This observation leads to the following definition.

\begin{definition}
The (injective) \textit{torsion} of the right $\Lambda$-module $A$ is defined by
\[
\injt(A) : =  (A \ot \blank)(\Lambda) = A \ot \Lambda.
\] 
\end{definition}

Immediately from this definition we have
\begin{proposition}
 If $\Lambda$ is a commutative domain, then the injective torsion coincides with the classical torsion. \qed
\end{proposition}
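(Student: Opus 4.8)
The key computation is, in fact, the one carried out informally in the paragraph preceding the Definition; the proof only needs to make its two inputs precise. Write $K$ for the field of fractions of the commutative domain $\Lambda$, set $S := \Lambda \setminus \{0\}$, and let $t(A) := \{\, a \in A \mid \lambda a = 0 \text{ for some } 0 \neq \lambda \in \Lambda \,\}$ be the classical torsion submodule of $A$. The plan is: (a) verify that $\Lambda \hookrightarrow K$ is an injective envelope, so that the Part~I identification of $A\ot\blank$ evaluated at $\Lambda$ applies with $E(\Lambda) = K$; (b) verify that $\ker(A \to A\otimes_\Lambda K) = t(A)$; and (c) assemble these, keeping track of naturality in $A$.

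For (a): the extension $\Lambda \hookrightarrow K$ is essential, since any nonzero $a/b \in K$ satisfies $b \cdot (a/b) = a \in \Lambda \setminus \{0\}$; and $K$ is an injective $\Lambda$-module by Baer's criterion, because a homomorphism from an ideal $I \subseteq \Lambda$ into $K$ is forced to be multiplication by a fixed scalar of $K$ (compare its values on two nonzero elements of $I$) and so extends to all of $\Lambda$. Hence $K$ is the injective envelope $E(\Lambda)$. For (b): the localization map $A \to S^{-1}A$ has kernel precisely $t(A)$, since every nonzero element of $\Lambda$ belongs to $S$; and $S^{-1}A \cong A \otimes_\Lambda S^{-1}\Lambda = A \otimes_\Lambda K$ naturally in $A$. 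Thus $t(A) = \ker(A \to A \otimes_\Lambda K)$, the canonical map recalled before the statement.

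To assemble (c): by the Part~I description of the injective stabilization, $\injt(A) = A \ot \Lambda$ is naturally identified with $\ker\bigl(A \otimes \Lambda \to A \otimes E(\Lambda)\bigr)$; substituting $E(\Lambda) = K$ from (a) and the canonical isomorphism $A \otimes_\Lambda \Lambda \cong A$ yields $\injt(A) \cong \ker(A \to A \otimes_\Lambda K)$, which is $t(A)$ by (b). Since every identification used is natural in $A$, this exhibits an isomorphism of subfunctors of the identity on $\Mod\Lambda$.

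I do not expect a genuine obstacle here — the work is essentially standard commutative algebra, layered on the concrete formula imported from Part~I. The two spots that call for a little care are establishing that $K$ is injective over an \emph{arbitrary} commutative domain (rather than just over a PID or Dedekind domain), and making sure that the Part~I formula, when specialized to the ring $\Lambda$ as a module over itself, reduces exactly to the kernel written above.
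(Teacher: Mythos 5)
Your proof is correct and follows exactly the line of reasoning the paper gives in the paragraph immediately preceding the definition of $\injt$, which is why the proposition is stamped with a bare \qed: the fraction field $K$ is the injective envelope of $\Lambda$, so $\injt(A) = \ker(A\otimes\Lambda \to A\otimes K) \cong \ker(A \to A\otimes_\Lambda K) = t(A)$. You simply make precise the two ingredients (injectivity and essentiality of $\Lambda\hookrightarrow K$ via Baer, and the localization kernel computation) that the paper treats as standard.
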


\textcolor{purple}{Since $\Lambda$ is a bimodule, $\injt(A)$ is a right $\Lambda$-module.} By~\cite[Remark~9.6]{MR-1}, the injective stabilization of the tensor product is a bifunctor. Together with the canonical isomorphism $A \otimes \Lambda \cong A$ and the fact that, in each component, the inclusion map from the injective stabilization to the \textcolor{purple}{functor} is a homomorphism of right $\Lambda$-modules~\cite[Remark~4.2]{MR-1}, this yields

\begin{proposition}\label{P:subfunctor}
$\injt$ is a subfunctor of the identity functor on the category 
 $\mathrm{Mod}$-$\Lambda$ of right $\Lambda$-modules,  \textcolor{purple}{and therefore a subfunctor of the forgetful functor on $\mathrm{Mod}$-$\Lambda$ to abelian groups.}
% In particular, $\injt(A)$ is a submodule of $A$. 
\qed
\end{proposition}

\begin{corollary}\label{C:s-pres-mono}
 $\injt$ preserves monomorphisms.\footnote{\textcolor{purple}{Any subfunctor of the identity functor preserves monomorphisms.}} In particular, if $\injt(A) = {0}$ and $B$ is a submodule of $A$, then $\injt(B) = {0}$. \qed
\end{corollary}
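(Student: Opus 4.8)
The plan is to deduce the corollary formally from the preceding proposition, that $\injt$ is a subfunctor of the identity functor on $\mathrm{Mod}$-$\Lambda$. Being a subfunctor of the identity means precisely that for every right $\Lambda$-module $M$ the module $\injt(M)$ is a submodule of $M$, and that for every homomorphism $f\colon M \to N$ the map $\injt(f)$ is the (co)restriction of $f$; equivalently, the square whose horizontal arrows are $f$ and $\injt(f)$ and whose vertical arrows are the canonical inclusions $\injt(M) \hookrightarrow M$ and $\injt(N) \hookrightarrow N$ commutes. This is the only structural input needed.

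First I would take an arbitrary monomorphism $f\colon M \to N$ and consider the composite $\injt(M) \hookrightarrow M \xrightarrow{f} N$. As a composition of two monomorphisms it is a monomorphism; by commutativity of the square it equals the composite $\injt(M) \xrightarrow{\injt(f)} \injt(N) \hookrightarrow N$, and since the latter is a monomorphism its first factor $\injt(f)$ is a monomorphism as well. This gives the first assertion. For the ``in particular'' part, apply this to the inclusion $\iota\colon B \hookrightarrow A$ of a submodule: $\injt(\iota)\colon \injt(B) \to \injt(A)$ is then a monomorphism, so $\injt(A) = 0$ forces $\injt(B) = 0$.

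There is essentially no obstacle: the substance lies entirely in the already-established fact that $\injt$ is a subfunctor of the identity (which in turn rests on \cite[Remark~3.3]{MR-1} and \cite[Remark~8.8]{MR-1}), and the corollary is the elementary observation that any subfunctor of the identity preserves monomorphisms, together with the trivial remark that the zero module has no nonzero submodules.
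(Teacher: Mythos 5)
Your argument is correct and is precisely the standard observation the paper leaves implicit (the corollary is stated with no proof, being immediate from the preceding proposition that $\injt$ is a subfunctor of the identity). Nothing more is needed.
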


\begin{corollary}\label{C:inj-stab-s=0}
 \textcolor{purple}{The injective stabilization of $\injt$ is zero: $\overline{\injt} = 0$.}
\end{corollary}

\begin{proof}
 \textcolor{purple}{By~\cite[Lemma 4.8]{MR-1}, the injective stabilization of a functor is zero if and only if it preserves monomorphisms.}
\end{proof}

\textcolor{purple}{If we want to write the definition of torsion as a functor, without referring to the argument, we already have a notation in place: $\injt = \blank \ot \Lambda$. The same cannot be said about our vocabulary since we do not have a succinct name for the right-hand side. Notice that in the case of the tensor product bifunctor, the functors $A \otimes \blank$ and $\blank \otimes B$ are of the same kind and, understandably, are both referred to as the (univariate) tensor product. The situation changes dramatically when dealing with the injective stabilization of the tensor product because the functors $A \ot \blank$ and $\blank \ot B$ are vastly different in their properties. For example, while the former vanishes on injectives, the latter vanishes on projectives (because tensoring with a projective is an exact functor). This need for a nomenclature prompts the following}

\begin{definition}\label{D:tensor-active-vs-inert}
\textcolor{purple}{Let $A$ be a right $\Lambda$-module and $B$ a left $\Lambda$-module. The functor $A \ot \blank$ will be called the \texttt{active injective stabilization} of the tensor product, whereas the functor $\blank \ot B$ will be called the \texttt{inert injective stabilization} of the tensor product.}\footnote{\textcolor{purple}{The more adventurous reader may reverse the direction of the harpoon and look at the bifunctor $A \otleft B$, which gives rise to another pair of active and inert univariate functors.}}
\end{definition}

\textcolor{purple}{Notice that, in each of the two cases above, the name of the functor corresponds to the appellation (active vs inert) of its argument (cf.~\cite[p. 455]{MR-1}). In summary, \texttt{the injective torsion functor is just the inert injective stabilization of the tensor product with~$\prescript{}{\Lambda}{\Lambda}$.}}

\begin{definition}
 The submodule $\injt(A)$ will be called the torsion submodule of $A$. 
\end{definition}

\begin{remark}
\textcolor{purple}{As part of Proposition~\ref{P:subfunctor}, we have that $\injt(A)$ is a submodule of $A$. Mimicking the definition in the commutative case, one can define the torsion $t(A)$ of $A$ as the set of all elements of $A$ which can be annihilated by regular elements of $\Lambda$. It is known~\cite[10.19]{Lam} that $\Lambda$ is right Ore if and only if for each right $\Lambda$-module~$A$, $t(A)$ is a submodule of $A$. It now follows that $\injt \neq t$ if $\Lambda$ is not right Ore.}
\end{remark}

There is another possible candidate, $\mathfrak{t}(A)$, for the torsion submodule of~$A$, first considered by Bass~\cite{B-60}. It is defined by the exact sequence
\[
0 \lra \mathfrak{t}(A) \lra A \overset{e_{A}}\lra A^{\ast\ast}
\]
where $A^{\ast} := \Hom_{\Lambda}(A, \Lambda)$ and $e_{A}$ is the canonical evaluation map. We shall call $\mathfrak{t}(A)$ the \textcolor{purple}{Bass torsion submodule} of~$A$.\footnote{The case of a vanishing Bass torsion (\textcolor{purple}{without using this name}) was investigated in~\cite{MS}; it provides a generalization and a conceptual framework for the notion of linkage of algebraic varieties. That approach was extended to functors by the second author in~\cite{R-15}. The case of a non-vanishing Bass torsion of finitely presented modules over semiperfect rings was studied in~\cite{M-10} \textcolor{purple}{(where it was called 1-torsion)}.} 
Clearly, $\mathfrak{t}$ is also a subfunctor of the identity functor. The Bass torsion is defined for any module and, for finitely generated modules over commutative domains, it coincides with the classical torsion, i.e., with the injective torsion. However, this is no longer true for infinitely generated modules. In fact, as the next example shows, the two could be at the extreme ends in terms of size.  

\begin{example}\label{E:extremes}
Let $\Lambda := \Z$ and $A := \Q$. Since $\Q$ is divisible, 
$\Q^{\ast\ast}= \{0\}$, and therefore $\mathfrak{t}(\Q) = \Q$. On the other hand, $\Q$ is flat and, by~\cite[Proposition~7.2]{MR-1},
$\injt(\Q) = \{0\}$.
\end{example}

While the injective torsion $\injt$ doesn't have this drawback and is indeed our choice for torsion, the Bass torsion functor $\mathfrak{t}$ will also play an important role in our arguments, so now we want to clarify the relationship between the two. The next result puts Example~\ref{E:extremes} in a conceptual framework.

\begin{proposition}\label{P:s-is-a-sub-of-t}
$\injt \subseteq \mathfrak{t}$, i.e., the injective torsion is a subfunctor of the Bass torsion. 
\end{proposition}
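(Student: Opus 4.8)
The plan is to factor the canonical evaluation map $e_A\colon A\to A^{\ast\ast}$ through a tensor product and then exploit the left exactness of a $\mathrm{Hom}$-functor to force the injective torsion inside $\ker e_A=\mathfrak{t}(A)$. Fix once and for all an embedding $\iota\colon\Lambda\hookrightarrow E$ of the left module ${}_\Lambda\Lambda$ into an injective left $\Lambda$-module $E$ (for instance its injective envelope, or the first term of any injective copresentation of ${}_\Lambda\Lambda$). Regarding $\injt(A)=A\ot\Lambda$ as a submodule of $A\cong A\otimes_\Lambda\Lambda$ via the canonical monomorphism of \cite[Remark~3.3]{MR-1}, the preliminary observation is that $\injt(A)$ lies in the kernel of the induced map $A\otimes\iota\colon A\otimes_\Lambda\Lambda\to A\otimes_\Lambda E$; equivalently, every $a\in\injt(A)$ satisfies $a\otimes\iota(1_\Lambda)=0$ in $A\otimes_\Lambda E$. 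This is the single point where the internal construction of the injective stabilization from Part~I is invoked: by bifunctoriality \cite[Remark~8.8]{MR-1}, the composite $A\ot\Lambda\to A\otimes_\Lambda\Lambda\xrightarrow{A\otimes\iota}A\otimes_\Lambda E$ factors through $A\ot E$, which vanishes since $E$ is injective.

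Since $\mathfrak{t}(A)=\ker e_A$ and $e_A(a)$ is the functional $f\mapsto f(a)$ on $A^{\ast}=\mathrm{Hom}_\Lambda(A,\Lambda)$, it now suffices to show that $f(a)=0$ for every $f\in A^{\ast}$ whenever $a\otimes\iota(1_\Lambda)=0$ in $A\otimes_\Lambda E$. This is a one-line computation: applying the functor $\blank\otimes_\Lambda E$ to $f$ and composing with the canonical isomorphism $\Lambda\otimes_\Lambda E\cong E$ gives a map $A\otimes_\Lambda E\to E$ carrying $a\otimes\iota(1_\Lambda)$ to $f(a)\cdot\iota(1_\Lambda)=\iota\bigl(f(a)\bigr)$; since $a\otimes\iota(1_\Lambda)=0$, we get $\iota\bigl(f(a)\bigr)=0$, hence $f(a)=0$ because $\iota$ is a monomorphism. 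As $f$ was arbitrary, $e_A(a)=0$, i.e.\ $a\in\mathfrak{t}(A)$. Therefore $\injt(A)\subseteq\mathfrak{t}(A)$ for every right $\Lambda$-module $A$; and since $\injt$ and $\mathfrak{t}$ are both subfunctors of the identity on $\mathrm{Mod}$-$\Lambda$, this pointwise containment is automatically a containment of subfunctors, which is the assertion.

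Conceptually, the mechanism is that $M\mapsto\bigl(A\otimes_\Lambda M\longrightarrow\mathrm{Hom}_\Lambda(A^{\ast},M),\ a\otimes m\mapsto(f\mapsto f(a)m)\bigr)$ is a natural transformation $A\otimes_\Lambda\blank\Rightarrow\mathrm{Hom}_\Lambda(A^{\ast},\blank)$ of functors on left $\Lambda$-modules whose component at ${}_\Lambda\Lambda$ is $e_A$; evaluating naturality at $\iota$ and using that $\mathrm{Hom}_\Lambda(A^{\ast},\blank)$ is left exact — so $\mathrm{Hom}_\Lambda(A^{\ast},\iota)$ is a monomorphism — yields $\ker(A\otimes\iota)\subseteq\ker e_A$ at once. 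I do not foresee a serious obstacle: everything after the preliminary observation is formal, and the observation itself follows directly from the definition of the injective stabilization together with its vanishing on injective modules, which is the only place the machinery of Part~I enters.
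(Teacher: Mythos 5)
Your proof is correct and is essentially the same argument as the paper's: the paper forms the commutative square obtained by evaluating the natural transformation $\mu_A : A \otimes \blank \to (A^{\ast}, \blank)$ at $\iota : \Lambda \to I$ and invokes left-exactness of $\Hom$ to see that $(1,\iota)$ is monic, which is exactly what your third (``conceptual'') paragraph states and what your second paragraph verifies element-by-element. The only presentational difference is your preliminary step deriving $\injt(A) \subseteq \ker(A\otimes\iota)$ from the vanishing of $A\ot E$ on injectives, whereas the paper simply takes $\injt(A) = \Ker(1\otimes\iota)$ as the definition of the injective stabilization evaluated at $\Lambda$.
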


\begin{proof}
 Let $A$ be an arbitrary module. Evaluating the natural transformation 
\[
A \otimes \blank \overset{\mu_{A}}\lra (A^{\ast}, \blank)
\]
(see Lemma~\ref{L:mu-on-proj} below) on the injective envelope $\iota : \Lambda \to I$ of $\Lambda$, we have a commutative square
\begin{equation}\label{Eq:mu-square}
\begin{gathered}
 \xymatrix
	{
	A \otimes \Lambda \ar[d]_{\mu_{A}(\Lambda)} \ar[r]^{1 \otimes \iota}
	& A \otimes I \ar[d]^{\mu_{A}(I)}
\\
	(A^{\ast}, \Lambda) \ar@{}[r]^(.30){}="a"^(.8){}="b" \ar@{>->}_{(1, \iota)} 	"a";"b"
	& (A^{\ast}, I).
	}
\end{gathered}
\end{equation}
Here $\injt(A) = \Ker (1 \otimes \iota)$ and $\mathfrak{t}(A) = 
\textcolor{purple}{\Ker \mu_{A}(\Lambda)}$. By the left-exactness of the Hom functor, $(1, \iota)$ is monic. The desired result now follows from the commutativity of the square.
\end{proof}

\begin{corollary}\label{C:Lambda-power}
 $\injt(\Lambda^{N}) = \{0\}$ for any indexing set $N$. \qed
\end{corollary}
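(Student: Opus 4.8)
The plan is to reduce the statement to Proposition~\ref{P:s-is-a-sub-of-t}. Since that result gives $\injt \subseteq \mathfrak{t}$, it is enough to show that the $1$-torsion already vanishes, i.e. that $\mathfrak{t}(\Lambda^{N}) = \{0\}$. By the defining exact sequence of $\mathfrak{t}$, this amounts to checking that the canonical evaluation map $e_{\Lambda^{N}} \colon \Lambda^{N} \to (\Lambda^{N})^{\ast\ast}$ is a monomorphism.

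To verify this I would exhibit enough functionals on $\Lambda^{N}$ to separate points. For each $n \in N$ the coordinate projection $p_{n} \colon \Lambda^{N} \to \Lambda$ is a homomorphism of right $\Lambda$-modules, hence an element of $(\Lambda^{N})^{\ast}$. If $x = (x_{n})_{n \in N}$ is a nonzero element of $\Lambda^{N}$, choose an index $n_{0}$ with $x_{n_{0}} \neq 0$; then $e_{\Lambda^{N}}(x)(p_{n_{0}}) = p_{n_{0}}(x) = x_{n_{0}} \neq 0$, so $e_{\Lambda^{N}}(x) \neq 0$. Therefore $\Ker e_{\Lambda^{N}} = \{0\}$, which yields $\mathfrak{t}(\Lambda^{N}) = \{0\}$ and hence $\injt(\Lambda^{N}) \subseteq \mathfrak{t}(\Lambda^{N}) = \{0\}$. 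The same argument works verbatim if $\Lambda^{N}$ is read as the direct sum $\bigoplus_{N} \Lambda$, since there too every element has only finitely many nonzero coordinates and the $p_{n}$ separate points.

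There is essentially no obstacle: the entire content is the elementary observation that the coordinate functionals on a product (or coproduct) of copies of the base ring separate points, together with the already-established containment $\injt \subseteq \mathfrak{t}$. I note that one cannot instead invoke Corollary~\ref{C:s-pres-mono} directly, since $\Lambda^{N}$ does not in general embed into $\Lambda$ (or into any module already known to have vanishing torsion); passing through $\mathfrak{t}$ is what makes the argument immediate.
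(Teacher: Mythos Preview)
Your proposal is correct and follows precisely the route the paper intends: the corollary is marked \qed\ with no explicit argument, being an immediate consequence of the just-proved inclusion $\injt \subseteq \mathfrak{t}$ together with the evident fact that the coordinate projections on $\Lambda^{N}$ separate points (so $\mathfrak{t}(\Lambda^{N}) = \{0\}$). You have simply spelled out the details the paper leaves implicit.
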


Next we want to show that the inclusion of functors $\injt \subseteq \mathfrak{t}$ becomes an equality when restricted to finitely presented modules. This could be done by appealing to prior results (in particular,~\cite[Proposition~9.4]{MR-1}). Instead, we opt for a short, self-contained, and more direct proof, presented below. First, we need the following well-known lemma.

\begin{lemma}\label{L:mu-on-proj}
The natural transformation 
\[
\mu_{A} : A \otimes \blank \lra (A^{\ast}, \blank)
\]
is an isomorphism whenever $A$ is a finitely generated projective.
\end{lemma}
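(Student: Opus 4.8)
The plan is to reduce to the case $A = \Lambda$ by additivity, and then verify that assertion directly. First I would observe that both sides of $\mu_A$ are additive functors in $A$ (the tensor product and the Hom functor both commute with finite direct sums, and dualization $A \mapsto A^{\ast}$ converts finite direct sums of right modules into finite direct sums of left modules), and that $\mu_A$ is natural in $A$. Hence, writing a finitely generated projective $A$ as a direct summand of some $\Lambda^{n}$, it suffices to check that $\mu_A$ is an isomorphism for $A = \Lambda^{n}$, and by additivity again for $A = \Lambda$.

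Next I would unwind what $\mu_{\Lambda}$ is. For $A = \Lambda$ we have the canonical identifications $\Lambda \otimes_{\Lambda} X \cong X$ and $\Lambda^{\ast} = \Hom_{\Lambda}(\Lambda, \Lambda) \cong \Lambda$, so that $(\Lambda^{\ast}, X) = \Hom_{\Lambda}(\Lambda^{\ast}, X) \cong \Hom_{\Lambda}(\Lambda, X) \cong X$. Under these identifications $\mu_{\Lambda}(X)$ becomes the identity map of $X$ (this is where one must be slightly careful that $\mu$ was \emph{defined} by the rule $a \otimes x \mapsto (f \mapsto f(a)x)$, so that the composite of all these canonical isomorphisms really is the identity, not merely an isomorphism); in particular it is an isomorphism for every $X$. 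Running the additivity argument backwards then gives the claim for $A = \Lambda^{n}$ and for every direct summand thereof, i.e.\ for every finitely generated projective $A$.

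The only genuinely delicate point is bookkeeping: one has to confirm that the various canonical isomorphisms are compatible with $\mu$ in the naturality square, i.e.\ that the isomorphism $\mu_{\Lambda^{n}} \cong \mu_{\Lambda}^{\oplus n}$ is the one induced by the biproduct structure, and similarly that passing to a summand $A \hookrightarrow \Lambda^{n} \twoheadrightarrow A$ carries $\mu_{\Lambda^{n}}$ to $\mu_A$. This is entirely formal — it follows from naturality of $\mu$ in $A$ applied to the split inclusion and projection — but it is the step that makes the ``well-known'' lemma actually a lemma rather than a triviality. No real obstacle is expected; the argument is standard.
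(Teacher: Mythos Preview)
Your proof is correct, but it takes a different route from the paper's. You reduce to $A=\Lambda$ by additivity and naturality in $A$, checking that $\mu_{\Lambda}$ is the identity after the canonical identifications, and then push the result back to direct summands of $\Lambda^{n}$. The paper instead writes down an explicit two-sided inverse of $\mu_{A}(M)$ using a finite dual basis $\{e_{i},f_{i}\}$ of $A$: the map $(A^{\ast},M)\to A\otimes M$ sending $g\mapsto\sum_{i} e_{i}\otimes g(f_{i})$. Your argument is cleaner conceptually and avoids the dual-basis bookkeeping, at the cost of the naturality-in-$A$ verification you flagged; the paper's argument is shorter on the page and produces a concrete formula for the inverse, which can be useful downstream. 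Both are entirely standard and either would serve.
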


\begin{proof}
 Let $M$ be an arbitrary left $\Lambda$-module. The map 
 $\mu_{A}(M) : A \otimes M \lra (A^{\ast}, M)$ is given by $\mu_{A}(M)(a \otimes m)(l) :=  l(a)m$ for any $a \in A$, $m \in M$, and $l \in A^{\ast}$. Assume now that $A$ is a finitely generated projective and let $\{e_{i}, f_{i}\}$ be a finite projective basis of $A$. It is easy to see that the map 
\[
(A^{\ast}, M) \lra A \otimes M : g \mapsto \sum e_{i} \otimes g(f_{i})
\]
where $g \in (A^{\ast}, M)$, is the inverse of $\mu_{A}(M)$.
\end{proof}

\begin{proposition}\label{P:fp implies s=t}
 If $A$ is finitely presented, then $\injt(A) = \mathfrak{t}(A)$.
\end{proposition}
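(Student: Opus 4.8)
The plan is to use the finite presentation of $A$ to obtain a projective presentation
\[
P_1 \overset{f}\lra P_0 \lra A \lra 0
\]
with $P_0, P_1$ finitely generated projective, and then to exploit the fact, established in Lemma~\ref{L:mu-on-proj}, that $\mu$ is an isomorphism on finitely generated projectives. Since we already have $\injt \subseteq \mathfrak{t}$ by Proposition~\ref{P:s-is-a-sub-of-t}, it suffices to prove the reverse inclusion $\mathfrak{t}(A) \subseteq \injt(A)$, i.e., that $\Ker e_A \subseteq \Ker(1 \otimes \iota)$, where $\iota : \Lambda \to I$ is the injective envelope of $\Lambda$ and $e_A : A \to A^{\ast\ast}$ is the evaluation map.

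First I would apply the right exact functors $\blank \otimes \Lambda$ (trivially) and $\blank \otimes I$ to the presentation, obtaining two exact sequences, and use the naturality of $\mu$ to compare them with the sequences obtained by applying $(\blank^{\ast}, \Lambda)$ and $(\blank^{\ast}, I)$. Concretely, dualizing the presentation gives a left-exact sequence $0 \to A^{\ast} \to P_0^{\ast} \to P_1^{\ast}$, and then applying $(\blank, \Lambda)$ and $(\blank, I)$ and using the naturality of the inclusion $(1,\iota)$, we get a commutative diagram whose rows compute $\injt$ and whose behavior is controlled by the maps on the projective terms. The point is that on $P_0$ and $P_1$ the vertical comparison maps $\mu_{P_i}(\Lambda)$ and $\mu_{P_i}(I)$ are isomorphisms, so a diagram chase identifies $\injt(A) = \Ker(1 \otimes \iota : A \otimes \Lambda \to A \otimes I)$ with $\Ker \bigl((A^{\ast}, \Lambda) \to (A^{\ast}, I)\bigr)$, i.e., with the submodule of $A \cong (A^{\ast},\Lambda)$ consisting of those maps $A^{\ast} \to \Lambda$ that become zero after composing with $\iota$. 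Since $\iota$ is monic, this kernel is $\{g \in (A^{\ast},\Lambda) : g = 0\} = 0$? No — rather, composing with $\iota$ is injective on $(A^\ast,\Lambda)$, so this forces the comparison differently; the correct reading is that the image of $\mathfrak t(A)$ under $e_A$-related maps lands in the relevant kernel, and one checks that an element of $\Ker e_A$ maps to $0$ in $A \otimes I$ by tracking it through $A^{\ast\ast} = (A^\ast, \Lambda) \hookrightarrow (A^\ast, I) \cong A \otimes I$.

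More precisely, the cleanest route: from the Snake Lemma applied to the map of presentations $P_\bullet \to P_\bullet$ composed through $\mu$, deduce that $\Ker \mu_A(M)$ and $\Coker \mu_A(M)$ depend only on the presentation and, since $\mu$ is iso on the $P_i$, that $\mu_A(M)$ is always a monomorphism (this is~\cite[Proposition~8.4]{MR-1}, but can be reproven here directly). Then in the square~\eqref{Eq:mu-square}, $\mu_A(\Lambda)$ is monic, so $\Ker(1 \otimes \iota) = \Ker\bigl(\mu_A(I) \circ (1 \otimes \iota)\bigr) = \Ker\bigl((1,\iota) \circ \mu_A(\Lambda)\bigr) = \Ker \mu_A(\Lambda) = \mathfrak{t}(A)$, where the third equality uses that $(1,\iota)$ is monic. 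This gives $\injt(A) = \mathfrak{t}(A)$.

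I expect the main obstacle to be verifying that $\mu_A$ is a monomorphism for finitely presented $A$ in a self-contained way — that is, running the Snake Lemma / five-lemma argument on the comparison of the two presentations cleanly. Everything else (right exactness of tensor, left exactness of Hom, naturality of $\mu$, and Lemma~\ref{L:mu-on-proj}) is routine, but one must be careful that the map on the $P_1$-level is only needed to be an isomorphism to conclude injectivity at the $A$-level, and set up the diagram so that the relevant connecting maps vanish. Once $\mu_A(\Lambda)$ is known to be monic, the conclusion is immediate from the commutative square~\eqref{Eq:mu-square} together with Proposition~\ref{P:s-is-a-sub-of-t}.
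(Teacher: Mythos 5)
Your final chain of kernel identities -- $\Ker(1 \otimes \iota) = \Ker(\mu_A(I) \circ (1 \otimes \iota)) = \Ker((1,\iota) \circ \mu_A(\Lambda)) = \Ker \mu_A(\Lambda) = \mathfrak{t}(A)$ -- is the right conclusion and is exactly what the paper extracts from the square~\eqref{Eq:mu-square}. The first equality, however, requires $\mu_A(I)$ (not $\mu_A(\Lambda)$, as you write) to be monic, and your justification for this is wrong. You claim that for finitely presented $A$ the map $\mu_A(M)$ is a monomorphism for \emph{every} $M$, deducing this by a five-lemma argument from the fact that the $\mu_{P_i}$ are isomorphisms. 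But $\mu_A(\Lambda)$ identifies with the evaluation $e_A : A \to A^{\ast\ast}$, whose kernel is $\mathfrak{t}(A)$, so asserting $\mu_A(\Lambda)$ is monic amounts to asserting $\mathfrak{t}(A) = 0$ for every finitely presented $A$ -- which is false. Concretely, with $\Lambda = \Z$ and $A = \Z/2\Z$ one has $A^{\ast} = 0$, hence $\mu_A(\Z) : \Z/2\Z \to 0$ is not a monomorphism.

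The five-lemma deduction fails for general $M$ because, while the top row $P_1 \otimes M \to P_0 \otimes M \to A \otimes M \to 0$ is exact by right exactness of tensor, the bottom row $(P_1^{\ast}, M) \to (P_0^{\ast}, M) \to (A^{\ast}, M) \to 0$, obtained by applying $(\blank, M)$ to the left-exact sequence $0 \to A^{\ast} \to P_0^{\ast} \to P_1^{\ast}$, is \emph{not} exact for general $M$: exactness at $(P_0^{\ast}, M)$ and surjectivity onto $(A^{\ast}, M)$ each require extending homomorphisms along monomorphisms, which is available precisely when $M$ is injective. So the correct statement, and the one the paper proves, is that $\mu_A(I)$ is an isomorphism for finitely presented $A$ and \emph{injective} $I$; the injectivity of $I$ is essential, not incidental. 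With that replacement the rest of your argument goes through unchanged: $(1,\iota)$ is monic by left exactness of Hom, the square~\eqref{Eq:mu-square} commutes, and therefore $\injt(A) = \Ker(1 \otimes \iota) = \Ker \mu_A(\Lambda) = \mathfrak{t}(A)$. The fix is to drop the claim about arbitrary $M$ and instead establish that $\mu_A(I)$ is an isomorphism by combining Lemma~\ref{L:mu-on-proj}, exactness of $(\blank, I)$, and the five lemma.
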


\begin{proof}
 It suffices to show that, under the above assumption, the map 
\[
\mu_{A}(I) : A \otimes I \lra (A^{\ast}, I)
\]
from~\eqref{Eq:mu-square} is an isomorphism. Let $ P_{1} \to P_{0} \to A \to 0$
be a finite presentation of~$A$. Using the functoriality of $\mu$, we have a commutative diagram 
\[
\xymatrix
	{
	P_{1} \otimes I \ar[r] \ar[d]^{\mu_{P_{1}}(I)}  
	& P_{0} \otimes I \ar[r] \ar[d]^{\mu_{P_{0}}(I)}
	& A \otimes I \ar[r] \ar[d]^{\mu_{A}(I)} 
	& 0
\\
	(P_{1}^{\ast}, I) \ar[r]
	& (P_{0}^{\ast}, I) \ar[r]
	& (A^{\ast}, I) \ar[r]
	& 0
	}
\] 
Since the tensor product is right-exact, the top row is exact. Since $I$ is injective, the bottom row is exact, too. By Lemma~\ref{L:mu-on-proj}, the $\mu_{P_{i}}(I)$ are isomorphisms. It follows that $\mu_{A}(I)$ is an isomorphism, too.
\end{proof}

\begin{remark}\label{R:t}
 \textcolor{purple}{If $A$ is finitely presented, then $\mathfrak{t}(A) \simeq  \Ext^{1}_{\Lambda^{\mathrm{op}}}(\Tr A, \Lambda)$, as was first established in~\cite[Proposition 6.3]{A66}.} Here $\Tr A$ is the transpose of $A$.
\end{remark}

Next we want to show that the injective torsion functor commutes with filtered colimits and, \textcolor{purple}{therefore}, coproducts (see \cite[Corollary 10.5]{MR-1}). 
%Corollary 10.5 \label{C:comm-coprod}
This is a consequence of the following well-known fact.

\begin{lemma}
Let $X,Y$ be left $\Lambda$-modules and $\blank\otimes X\overset{\alpha}{\longrightarrow} \blank\otimes Y$ be a natural transformation between the tensor \textcolor{purple}{product} functors on the category $\Mod$-$\Lambda$ of all right $\Lambda$-modules.  Then the functor $F:\Mod$-$\Lambda \to \ab$ defined by the exact sequence 
\[
0\longrightarrow F\longrightarrow\blank\otimes X\overset{\alpha}{\longrightarrow} \blank\otimes Y
\]
commutes with filtered colimits and coproducts.
\end{lemma}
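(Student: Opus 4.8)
The plan is to exploit two standard facts. First, both tensor functors $\blank \otimes X$ and $\blank \otimes Y$ preserve arbitrary colimits, being left adjoints. Second, filtered colimits and arbitrary coproducts are exact functors on the category of abelian groups, hence preserve kernels. Granting these, the argument reduces to comparing the kernel of $\alpha$ computed before and after passing to the colimit.

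Concretely, let $(M_{i})_{i \in \mathcal{I}}$ be a filtered system of right $\Lambda$-modules with colimit $M$ and structure maps $\lambda_{i} \colon M_{i} \to M$. Applying $F$, $\blank \otimes X$, and $\blank \otimes Y$ to the $\lambda_{i}$ and then taking $\varinjlim_{i}$, I would assemble the commutative diagram with exact rows
\[
\xymatrix{
0 \ar[r] & \varinjlim_{i} F(M_{i}) \ar[r] \ar[d] & \varinjlim_{i} (M_{i} \otimes X) \ar[r]^{\varinjlim_{i}\alpha_{M_{i}}} \ar[d]^{\cong} & \varinjlim_{i} (M_{i} \otimes Y) \ar[d]^{\cong}
\\
0 \ar[r] & F(M) \ar[r] & M \otimes X \ar[r]^{\alpha_{M}} & M \otimes Y
}
\]
Here the top row is exact because a filtered colimit preserves the kernel of the morphism of diagrams $(\alpha_{M_{i}})$; the bottom row is exact by the definition of $F$; the two right-hand vertical maps are the canonical isomorphisms expressing that $\blank \otimes X$ and $\blank \otimes Y$ preserve colimits; and the naturality of $\alpha$ is precisely what makes the right square commute, identifying $\varinjlim_{i}\alpha_{M_{i}}$ with $\alpha_{M}$ under those isomorphisms. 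The left-hand vertical arrow is then the canonical comparison morphism $\varinjlim_{i} F(M_{i}) \to F(M)$ induced by the $F(\lambda_{i})$, and since kernels are unique up to unique isomorphism (equivalently, by the five lemma), it is an isomorphism. This says exactly that $F$ commutes with the filtered colimit.

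For coproducts I would run the identical argument with $\varinjlim_{i}$ replaced by $\bigoplus_{i}$ throughout: direct sums are exact, so $\bigoplus_{i} F(M_{i})$ is again the kernel of $\bigoplus_{i}\alpha_{M_{i}}$, while $\bigoplus_{i}(M_{i} \otimes X) \cong \bigl(\bigoplus_{i} M_{i}\bigr) \otimes X$ canonically, and likewise for $Y$; the same four-term diagram with exact rows then forces the comparison map $\bigoplus_{i} F(M_{i}) \to F(\bigoplus_{i} M_{i})$ to be an isomorphism. Alternatively, since every additive functor preserves finite biproducts, one may deduce the coproduct case from the filtered-colimit case by writing an arbitrary coproduct as the filtered colimit of its finite sub-coproducts.

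The only delicate point --- the ``main obstacle,'' such as it is --- is the bookkeeping of the comparison maps: one must verify that the left-hand vertical arrow in the diagram is genuinely the canonical map $\varinjlim_{i} F(M_{i}) \to F(M)$ whose invertibility is the meaning of ``commutes with colimits,'' and not some other arrow. This is immediate once one recalls that $F$ is a subfunctor of $\blank \otimes X$, so that every map occurring in the diagram is a restriction of a structure map of the tensor functors and the whole diagram commutes on the nose.
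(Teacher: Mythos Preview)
Your proof is correct and follows essentially the same approach as the paper: both arguments rest on the two facts that tensor functors preserve (filtered) colimits and coproducts, and that filtered colimits and coproducts are exact in $\ab$, hence preserve kernels. Your presentation is in fact a bit more careful than the paper's---you set up the comparison diagram explicitly and verify that the induced map is the canonical one---while the paper simply juxtaposes the two exact sequences and reads off the isomorphism.
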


\begin{proof}Let $A=\underset{\longrightarrow}{\lim}\ A_i$ be a left module represented as the filtered colimit of finitely presented modules.  Since both 
$\blank\otimes X$ and $\blank\otimes Y$ commute with coproducts and colimits, we have an exact sequence 
\[
\xymatrix
	{
	0 \ar[r]  
	& F(A) \ar[r] 
	& \varinjlim(A_i\otimes X) \ar[r]^{\alpha_A}
	& \varinjlim( A_i\otimes Y)
	}
\] 
Since the functor ``filtered $\underset{\longrightarrow}{\lim}$'' is exact in the category of abelian groups, there is also an exact sequence 
\[
\xymatrix
	{
	0 \ar[r]  
	& \varinjlim F(A_{i}) \ar[r] 
	& \varinjlim(A_i\otimes X) \ar[r]^{\alpha_A}
	& \varinjlim( A\otimes Y)
	}
\] 
It follows that we have a functorial isomorphism 
\[
\underset{\longrightarrow}{\lim }\ F(A_i) \simeq F(A)
\]
i.e., $F$ commutes with filtered colimits.  An entirely similar argument shows that $F(\coprod M_i) \simeq\coprod F(M_i)$ since taking coproducts is also exact in the category of abelian groups and the tensor product functors commute with coproducts.\footnote{\textcolor{purple}{This also follows from the fact that any direct sum is a filtered colimit of finite direct sums.}}  \end{proof}

\begin{proposition}\label{P:s-comm-colim}
For any left module $B$, the functor 
\[
\blank \ot B : \Mod\text{-}\Lambda \to \ab
\] 
commutes with filtered colimits and coproducts. In particular, the torsion functor 
$\injt = \blank \ot \Lambda$ commutes with filtered colimits and coproducts.
\end{proposition}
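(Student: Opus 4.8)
The plan is to realize $\blank \ot B$ as the kernel functor of a natural transformation between tensor functors on $\Mod$-$\Lambda$ and then invoke the Lemma just proved. Fix an injective envelope $\iota \colon B \to I$ of the left $\Lambda$-module $B$. By the construction of the injective stabilization of the tensor product from Part~I --- recall that it is a bifunctor, \cite[Remark~8.8]{MR-1}, and that for the value at $\Lambda$ this is precisely the description used in the proof of Proposition~\ref{P:s-is-a-sub-of-t} --- one has, naturally in the right module $A$,
\[
A \ot B \;=\; \Ker\bigl(A \otimes \iota \colon A \otimes B \lra A \otimes I\bigr).
\]
Hence the functor $\blank \ot B \colon \Mod\text{-}\Lambda \to \ab$ is exactly the functor $F$ fitting into the exact sequence
\[
0 \lra F \lra \blank \otimes B \overset{\blank \otimes \iota}{\lra} \blank \otimes I,
\]
where $\blank \otimes \iota$ is a natural transformation between the tensor functors $\blank \otimes B$ and $\blank \otimes I$.

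With this identification in place, I would simply apply the preceding Lemma with $X := B$, $Y := I$, and $\alpha := \blank \otimes \iota$; it immediately gives that $F = \blank \ot B$ commutes with filtered colimits and with coproducts. The ``in particular'' clause is then the special case $B = \Lambda$, for which $\blank \ot \Lambda = \injt$ by definition.

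The one step that actually requires care --- and essentially the only nontrivial point --- is the naturality assertion in the first paragraph: that $A \mapsto A \ot B$ is not just defined objectwise via injective envelopes but is literally the kernel, computed in $\ab$, of the natural transformation $\blank \otimes \iota$ in the variable $A$. This follows from the bifunctoriality of the injective stabilization of the tensor product; once it is recorded, nothing remains to be computed, the Lemma supplying the commutation with filtered colimits and coproducts. One could alternatively sidestep this bookkeeping by quoting the relevant structural results of Part~I directly.
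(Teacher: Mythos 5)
Your proof is correct and takes exactly the approach of the paper: identify $\blank \ot B$ as the kernel of the natural transformation $\blank \otimes \iota \colon \blank \otimes B \to \blank \otimes I$, where $\iota$ is an injective envelope, and invoke the preceding Lemma with $X=B$, $Y=I$. The paper states this in a single terse line; you have simply made explicit the bookkeeping point (bifunctoriality of $\ot$, so the kernel is genuinely computed in the category of functors, naturally in $A$) that the paper leaves to the reader.
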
 

\begin{proof}
Indeed, $\blank\ot B$ is determined by the exact sequence of functors 
\[
0\to \blank\ot B\to \blank\otimes B\to \blank\otimes I  
\] 
\end{proof}

If the ring is left semihereditary then the Bass torsion submodule of a finitely presented \texttt{right} module splits off \textcolor{purple}{(see, for example, \cite[Proposition~6]{M-10})}. Proposition~\ref{P:s-comm-colim} now yields a suitable extension of this result to arbitrary modules.

\begin{corollary}
 Suppose $\Lambda$ is \texttt{left} semihereditary and $A$ is a \texttt{right}  $\Lambda$-module. Then the inclusion $\injt(A) \subseteq A$ is pure.
\end{corollary}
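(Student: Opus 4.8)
The plan is to reduce everything to the finitely presented case, where the result is already known, and then propagate it along a filtered colimit. First I would write $A$ as a filtered colimit $A = \varinjlim_{i} A_{i}$ of finitely presented right $\Lambda$-modules. For each index $i$, Proposition~\ref{P:fp implies s=t} gives $\injt(A_{i}) = \mathfrak{t}(A_{i})$, and since $\Lambda$ is left semihereditary the $1$-torsion submodule of the finitely presented module $A_{i}$ splits off. Hence the short exact sequence
\[
0 \lra \injt(A_{i}) \lra A_{i} \lra A_{i}/\injt(A_{i}) \lra 0
\]
is split, and in particular it is pure.

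Next I would pass to the colimit of this filtered system of short exact sequences. Filtered colimits are exact in $\Mod$-$\Lambda$, so the colimit sequence is again short exact; and since $\injt$ is a subfunctor of the identity that commutes with filtered colimits (Proposition~\ref{P:s-comm-colim}), while the identity functor and the formation of quotients also commute with colimits, the colimit sequence is canonically identified with
\[
0 \lra \injt(A) \lra A \lra A/\injt(A) \lra 0 .
\]
Thus $0 \to \injt(A) \to A \to A/\injt(A) \to 0$ is realized as the filtered colimit of the split (hence pure) sequences displayed above.

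It remains to note that purity is preserved under filtered colimits: for any left $\Lambda$-module $L$, the functor $\blank \otimes_{\Lambda} L$ commutes with filtered colimits, each sequence in the system stays exact after applying it (by purity of split sequences), and filtered colimits are exact, so the colimit sequence stays exact after $\blank \otimes_{\Lambda} L$. Therefore $\injt(A) \otimes_{\Lambda} L \to A \otimes_{\Lambda} L$ is monic for every $L$, which is precisely the assertion that $\injt(A) \subseteq A$ is pure. The only delicate point is the identification of the colimit of the sequences $0 \to \injt(A_{i}) \to A_{i} \to A_{i}/\injt(A_{i}) \to 0$ with the sequence $0 \to \injt(A) \to A \to A/\injt(A) \to 0$ together with its canonical maps; this is exactly where Proposition~\ref{P:s-comm-colim} (and the subfunctor property of $\injt$) does the real work, everything else being standard homological bookkeeping.
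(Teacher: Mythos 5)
Your proof is correct and follows essentially the same route as the paper's: write $A$ as a filtered colimit of finitely presented modules, note that each $\injt(A_i) \hookrightarrow A_i$ splits (by Proposition~\ref{P:fp implies s=t} together with the left-semihereditary hypothesis), identify the colimit of these split sequences with $0 \to \injt(A) \to A \to A/\injt(A) \to 0$ using Proposition~\ref{P:s-comm-colim}, and conclude purity. You spell out more carefully than the paper does why a filtered colimit of split monomorphisms is pure (commuting $\blank \otimes_\Lambda L$ past the filtered colimit), but the underlying argument is the same.
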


\begin{proof}
 If $A$ is finitely presented, then $\injt(A)$ is a direct summand of $A$, as we just remarked. A general $A$ can be represented as a filtered colimit of finitely presented modules:
 $A \simeq \varinjlim A_{i}$ with structure maps $\varphi_{ij} : A_{i} 
 \lra A_{j}$. Applying the functor~$\injt$, we have a directed system $\injt(\varphi_{ij}) : \injt(A_{i})  \lra \injt(A_{j})$. As $\injt$ preserves filtered colimits, we have an isomorphism 
\[
\varinjlim (\injt(A_{i})) \simeq \injt(\varinjlim A_{i}) = \injt(A)
\]
In fact, it is easy to check that the inclusion $\injt(A) \to A$ is isomorphic to the colimit of the inclusions $\injt(A_{i}) \to A_{i}$. Since those inclusion are split monomorphisms, we have the desired claim.
 
 \end{proof}

Having brought colimits into the picture, we can now see what makes $\mathfrak{t}$ different from $\injt$: the Bass torsion does not, in general, preserve filtered colimits. To show this, we return to Example~\ref{E:extremes}. Represent $\Q$ as a filtered colimit of its finitely generated submodules: $\Q = \varinjlim A_{i}$. Then $\mathfrak{t}(A_{i})$  agrees with the classical torsion of $A_{i}$, which is zero. If $\mathfrak{t}$ preserved filtered colimits, we would have $\mathfrak{t}(\Q) = \{0\}$, a contradiction.

The next result makes the previous observation precise.

\begin{proposition}The torsion functor $\injt$ is the largest subfunctor of the Bass torsion functor $\mathfrak{t}$ that commutes with filtered colimits.
\end{proposition}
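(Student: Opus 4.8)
The plan is to verify that $\injt$ itself has the two stated properties and then that it dominates every other subfunctor of $\mathfrak{t}$ with those properties. The first property, $\injt \subseteq \mathfrak{t}$, is Proposition~\ref{P:s-is-a-sub-of-t}, and the second, commutation with filtered colimits, is Proposition~\ref{P:s-comm-colim}. So the only remaining content of the statement is this: if $G$ is a subfunctor of the identity functor on $\Mod$-$\Lambda$ with $G \subseteq \mathfrak{t}$ and $G$ commutes with filtered colimits, then $G \subseteq \injt$.

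To prove this, fix a right $\Lambda$-module $A$ and write it as a filtered colimit $A \simeq \varinjlim A_i$ of finitely presented modules. Since each $A_i$ is finitely presented, Proposition~\ref{P:fp implies s=t} gives $\mathfrak{t}(A_i) = \injt(A_i)$; hence $G(A_i) \subseteq \mathfrak{t}(A_i) = \injt(A_i)$ inside $A_i$, and these inclusions are compatible with the structure maps of the directed system. Now pass to the filtered colimit. For any subfunctor $H$ of the identity that commutes with filtered colimits, the canonical map $\varinjlim H(A_i) \to H(A)$ is an isomorphism and, by naturality of $H \hookrightarrow \id$, the inclusion $H(A) \hookrightarrow A$ is identified with the colimit of the inclusions $H(A_i) \hookrightarrow A_i$ (this was already observed for $H = \injt$ in the proof of the corollary on left semihereditary rings above, and the identical reasoning applies to $H = G$). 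Applying this to both $H = G$ and $H = \injt$, and using that filtered colimits are exact in $\ab$, the inclusions $G(A_i) \subseteq \injt(A_i) \subseteq A_i$ pass in the colimit to inclusions $G(A) \subseteq \injt(A) \subseteq A$. Since $A$ was arbitrary, $G$ is a subfunctor of $\injt$, which is therefore the largest subfunctor of $\mathfrak{t}$ commuting with filtered colimits.

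I do not expect a genuine obstacle: the argument is a short diagram chase resting on Propositions~\ref{P:fp implies s=t} and~\ref{P:s-comm-colim} together with the exactness of filtered colimits of abelian groups. The one point that requires care is the identification of the various colimits as \emph{submodules of $A$} --- that is, making sure the comparison $G(A) \hookrightarrow \injt(A)$ produced at the level of colimits is genuinely the inclusion of submodules of $A$ and not merely some abstract monomorphism --- and this is handled by invoking the naturality of the inclusions of $G$, $\mathfrak{t}$, and $\injt$ into the identity functor, exactly as in the corollary on left semihereditary rings.
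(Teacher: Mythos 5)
Your proof is correct and takes essentially the same route as the paper's: reduce to finitely presented modules, where Proposition~\ref{P:fp implies s=t} gives $\injt=\mathfrak{t}$, and then pass to filtered colimits using the hypothesis that the competing subfunctor also commutes with them. You are a bit more careful than the paper about checking that the comparison produced at the colimit level is the honest inclusion of submodules of $A$ rather than an abstract monomorphism; the paper simply asserts this is ``easily seen to be natural.''
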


\begin{proof}
We have already seen in Proposition~\ref{P:fp implies s=t} that the inclusion $\injt \subseteq \mathfrak{t}$ evaluates to an equality on finitely presented modules. We have also seen in~Proposition~\ref{P:s-comm-colim}  that $\injt$ commutes with filtered colimits. Suppose that $\mu : \mathfrak{u} \to \mathfrak{t}$ is a subfunctor of $\mathfrak{t}$ commuting with filtered colimits.  Let $A=\underset{\longrightarrow}{\lim}\ A_i$ be a right module expressed as a filtered colimit of finitely presented modules.  The components 
$\mu_i : \mathfrak{u}(A_i)\to \mathfrak{t}(A_i) = \injt(A_i)$ induce a family of monomorphisms $\tilde{\mu}_i : \mathfrak{u}(A_i)\to \injt(A_i)$. By applying the exact functor $\underset{\longrightarrow}{\lim}$ to this family we have a monomorphism $\underset{\longrightarrow}{\lim}\ \tilde \mu_i \ :\underset{\longrightarrow}{\lim}\ \mathfrak{u}(A_i)\to \underset{\longrightarrow}{\lim}\ \injt(A_i)$.  Since $\mathfrak{u}$ and $\injt$ commute with filtered colimits, this monomorphism is equivalent to a monomorphism $\mathfrak{u} (A)\to \injt(A)$.  This is easily seen to be natural in $A$, establishing that $ \mathfrak{u} \subseteq \injt$.    
\end{proof}

One property of torsion shared by both the classical torsion over commutative domains and the Bass torsion over arbitrary rings, is that both functors are radicals, i.e., the quotient of any module modulo its torsion submodule has zero torsion. Now we show that $\injt$ has this property, too.

\begin{theorem}\label{T:residual-torsion}
$\injt$ is a radical, i.e., $\injt(A / \injt(A)) \simeq \{0\}$ for any module $A$.
\end{theorem}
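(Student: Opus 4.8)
The plan is to reduce the assertion to finitely presented modules, where $\injt$ coincides with the $1$-torsion $\mathfrak{t}$, and then to invoke the fact that $\mathfrak{t}$ is a radical.

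First I would observe that the assignment $A \mapsto A/\injt(A)$ is the cokernel of the inclusion $\injt \hookrightarrow \id$ of subfunctors of the identity functor on $\Mod$-$\Lambda$. Both $\injt$ (Proposition~\ref{P:s-comm-colim}) and $\id$ commute with filtered colimits, and cokernels of natural transformations of functors into $\ab$ are computed componentwise and commute with filtered colimits; hence $A \mapsto A/\injt(A)$ commutes with filtered colimits, and composing once more with $\injt$ shows that $A \mapsto \injt(A/\injt(A))$ commutes with filtered colimits. Writing an arbitrary $A$ as a filtered colimit $A = \varinjlim A_i$ of finitely presented modules then yields $\injt(A/\injt(A)) \cong \varinjlim \injt(A_i/\injt(A_i))$, so it suffices to prove the theorem for finitely presented $A$.

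For finitely presented $A$, Proposition~\ref{P:fp implies s=t} gives $\injt(A) = \mathfrak{t}(A)$, so $A/\injt(A) = A/\mathfrak{t}(A)$, and Proposition~\ref{P:s-is-a-sub-of-t} gives $\injt(A/\injt(A)) = \injt(A/\mathfrak{t}(A)) \subseteq \mathfrak{t}(A/\mathfrak{t}(A))$. It thus remains to recall that $\mathfrak{t}$ is a radical, and here is a self-contained reason: by definition $A/\mathfrak{t}(A)$ embeds, via $e_{A}$, into $A^{\ast\ast} = (A^{\ast})^{\ast}$; any dual module $N^{\ast}$ has vanishing $1$-torsion, since $f \mapsto (f(n))_{n \in N}$ embeds $N^{\ast}$ into a product of copies of $\Lambda$; hence $\mathfrak{t}(A^{\ast\ast}) = \{0\}$, and since $\mathfrak{t}$ preserves monomorphisms and $A/\mathfrak{t}(A)$ is a submodule of $A^{\ast\ast}$, we get $\mathfrak{t}(A/\mathfrak{t}(A)) = \{0\}$, as needed.

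The one point that deserves care is the colimit reduction: one must check that the canonical isomorphism $\varinjlim \injt(A_i) \cong \injt(A)$ supplied by Proposition~\ref{P:s-comm-colim} is compatible with the inclusions $\injt(A_i) \hookrightarrow A_i$ and $\injt(A) \hookrightarrow A$, so that passing to cokernels indeed produces $\varinjlim (A_i/\injt(A_i)) \cong A/\injt(A)$. This is precisely the compatibility already exploited in the proof of the purity corollary above (``the inclusion $\injt(A) \to A$ is isomorphic to the colimit of the inclusions $\injt(A_i) \to A_i$''), and it follows from the exactness of filtered colimits applied to the defining exact sequence $0 \to \injt \to \blank \otimes \Lambda \to \blank \otimes I$. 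Apart from this bookkeeping the argument is formal; in particular, no direct manipulation inside $(A/\injt(A)) \otimes I$ is required.
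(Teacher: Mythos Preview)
Your proof is correct and follows essentially the same route as the paper: reduce to finitely presented modules via the fact that $\injt$ (and hence $A\mapsto A/\injt(A)$) commutes with filtered colimits, then use $\injt=\mathfrak{t}$ on finitely presented modules together with the radical property of $\mathfrak{t}$. The only differences are cosmetic---you package the colimit step as ``the composite functor commutes with filtered colimits'' rather than unwinding it step by step, and you supply a self-contained verification that $\mathfrak{t}$ is a radical where the paper simply cites this as known.
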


\begin{proof}
As we just mentioned, for any $A$, $\mathfrak{t}(A / \mathfrak{t}(A)) = \{0\}$, where $\mathfrak{t}$ is the Bass torsion functor.  Since $\injt$ is a subfunctor of $\mathfrak{t}$ by Proposition~\ref{P:s-is-a-sub-of-t}, we have 
$\injt(A/ \mathfrak{t}(A)) = \{0\}$ for any $A$. Assuming now that is $A$ finitely presented we have, since $\injt(A) = \mathfrak{t}(A)$ by Proposition~\ref{P:fp implies s=t}, the desired assertion for finitely presented modules. 

If $A$ is now arbitrary, let $A =\underset{\longrightarrow}{\lim}\ A_i$ be a representation of $A$ as a filtered colimit of finitely presented modules. Then $\injt(A_i/\injt(A_i)) = \{0\}$ for each~$A_i$ and
\begin{align*}
	\injt(A/\injt(A))
	&  \simeq  \injt\bigg(\frac{\underset{\longrightarrow}{\lim}\ A_i} 					{\injt(\underset{\longrightarrow}{\lim}\ A_i)}\bigg)
	& 
\\
	& \simeq \injt\bigg(\frac{\underset{\longrightarrow}{\lim}\ A_i}						{\underset{\longrightarrow}{\lim}\ \injt(A_i)}\bigg)
	& (\text{since $\injt$ commutes with filtered colimits})
\\
	& \simeq \injt\bigg(\underset{\longrightarrow}{\lim}\ \frac{A_i}{\injt(A_i)}\bigg)
	& (\text{since filtered colimits preserve exactness})
\\
	&  \simeq \underset{\longrightarrow}{\lim}\, \injt\bigg(\frac{A_i}{\injt(A_i)}\bigg) 
	&  (\text{since $\injt$ commutes with filtered colimits})
\\
	& = \{0\}
\end{align*} 
\end{proof}

\begin{remark}\label{R:low-torsion}
\textcolor{purple}{In general, the radical $\injt$ is not idempotent, which can be seen as follows. By Proposition~\ref{P:fp implies s=t}, $\injt(A) =\mathfrak{t}(A)$ for any finitely presented $A$. It was shown in~\cite[Proposition~17]{M-10} that, over a commutative artinian local ring, $\mathfrak{t}(A)$ is contained in the radical of $A$ for \texttt{any} finitely generated nonzero $A$, making $\mathfrak{t}$ nilpotent on $A$. We would be done if we can now find a finitely generated $A$ with a nonzero $\mathfrak{t}(A)$. Let~$\Bbbk$ be a field and $\Lambda := \Bbbk[x,y]/(x,y)^{2}$. This is a finite-dimensional local algebra whose socle is minimally generated by the classes of $x$ and $y$. Hence~$\Lambda$ is indecomposable and not self-injective. Then there is an almost split sequence beginning with~$\Lambda$. It corresponds to a nonzero element of 
$\Ext^{1}(\Tr D \Lambda, \Lambda)$, the latter being isomorphic, by 
Remark~\ref{R:t}, to $\mathfrak{t}(D\Lambda)$ (here $D$ is the duality with respect to $\Bbbk$). Thus we can take $A := D \Lambda$.}
\end{remark}

It is helpful to update our terminology and introduce the familiar classical notions of torsion module and torsion-free module into the new context. As before, let $\iota : \prescript{}{\Lambda}{\Lambda} \to I$ be the injective envelope.

\begin{definition}\label{D:torsion-mod}
 A right module $A$ is said to be a \texttt{torsion module} if the canonical monomorphism $ \injt(A) \to A$ is an isomorphism or, equivalently, the map
\[
1 \otimes \iota : A \otimes \Lambda \to A \otimes I
\]
is zero. 
\end{definition}

\begin{proposition}\label{P:t-quot-coprod}
The class of torsion modules is closed under coproducts and quotient modules.
\end{proposition}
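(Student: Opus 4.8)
The statement to prove is that the class of torsion right $\Lambda$-modules is closed under direct sums and under quotients. Both parts should follow quickly from properties of $\injt$ already established, chiefly that $\injt$ is a subfunctor of the identity commuting with coproducts (Proposition~\ref{P:s-comm-colim}) and that $\injt$ preserves epimorphisms in an appropriate sense; the characterization of ``torsion module'' in Definition~\ref{D:torsion-mod} as the condition $\injt(A) \to A$ being an isomorphism is what I would work with throughout.

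For direct sums, I would take a family $\{A_j\}_{j \in J}$ of torsion modules and use that $\injt$ commutes with coproducts (Proposition~\ref{P:s-comm-colim}), so that the canonical map $\injt(\coprod_j A_j) \to \coprod_j A_j$ is, up to the coproduct-commutation isomorphism, the coproduct of the maps $\injt(A_j) \to A_j$. Each of the latter is an isomorphism by hypothesis, and a coproduct of isomorphisms is an isomorphism; hence $\coprod_j A_j$ is torsion. One should note that $\injt$ being a subfunctor of the identity guarantees that the comparison map really is (naturally identified with) the canonical inclusion, so no compatibility check is swept under the rug.

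For quotients, let $A$ be torsion and $p : A \twoheadrightarrow B$ an epimorphism. Applying the subfunctor $\injt$ to $p$ gives a commutative square with vertical inclusions $\injt(A) \hookrightarrow A$ and $\injt(B) \hookrightarrow B$ and horizontal maps $\injt(p)$ on top and $p$ on the bottom. Since $\injt(A) \to A$ is an isomorphism (as $A$ is torsion) and $p$ is onto, the composite $\injt(A) \xrightarrow{\ \sim\ } A \xrightarrow{p} B$ is onto; but this composite factors through $\injt(B) \hookrightarrow B$ by commutativity of the square, so $\injt(B) \to B$ is onto as well. Being also a monomorphism (it is the canonical inclusion, $\injt$ being a subfunctor of the identity), it is an isomorphism, so $B$ is torsion.

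I do not anticipate a genuine obstacle here; the one point needing a little care is making sure the ``coproduct of the inclusions'' identification in the direct-sum case is legitimate — i.e., that the canonical map $\coprod_j \injt(A_j) \to \injt(\coprod_j A_j)$ furnished by Proposition~\ref{P:s-comm-colim} is compatible with the two inclusions into $\coprod_j A_j$ — but this is immediate from naturality of the inclusion $\injt \hookrightarrow \mathrm{id}$ together with the universal property of the coproduct. If one prefers to avoid even that, the equivalent formulation via ``$1 \otimes \iota$ is zero'' in Definition~\ref{D:torsion-mod} makes both closure properties transparent: $1 \otimes \iota$ for a direct sum is the direct sum of the component maps, hence zero if each component map is; and for a quotient $B = A/A'$, the map $1 \otimes \iota$ for $B$ is a quotient of the map $1 \otimes \iota$ for $A$ (by right-exactness of $\otimes$), hence zero.
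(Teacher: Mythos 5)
Your proof is correct and matches the paper's approach: direct sums via Proposition~\ref{P:s-comm-colim}, and quotients via the commutative square coming from $\injt$ being a subfunctor of the identity, concluding that the bottom inclusion $\injt(B)\hookrightarrow B$ is both monic and epic. The extra care you take in identifying the coproduct comparison map with the inclusion, and the alternative argument via $1\otimes\iota$, are fine but not needed beyond what the paper already records.
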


\begin{proof}
\textcolor{purple}{This result holds in greater generality: the torsion class of any preradical is closed under quotient modules and coproducts, see~\cite[VI, Proposition 1.2]{S-75}.}
% The first assertion follows from Proposition~\ref{P:s-comm-colim}. Let now $\pi : M \to M''$ be an epimorphism of $\Lambda$-modules. Since $\injt$ is a subfunctor of the identity functor, we have a commutative square 
% \[
%\xymatrix
%	{
%	\injt(M) \ar@{}[r]^(.35){}="a"^(.8){}="b" \ar@{>->} "a";"b"  \ar[d]
%	& M \ar@{->>}[d]^{\pi}
%\\
%	\injt(M'') \ar@{}[r]^(.35){}="a"^(.8){}="b" \ar@{>->} "a";"b"
%	& M''
%	}
%\] 
%If $M$ is torsion, then the top arrow is an isomorphism, forcing the bottom arrow to be epic, but then it is an isomorphism.\footnote{\textcolor{purple}{The proof shows that the second assertion  holds for any preradical in place of $\injt$, and it is not difficult to see that the first assertion also holds for any preradical~\cite[VI, Proposition 1.2]{S-75}.}}
\end{proof}

\begin{remark}\label{R:t-not-tt}
\textcolor{purple}{In general, the class of torsion modules is not closed under extensions. Otherwise this class would be the torsion class of a torsion theory in the sense of Dickson~\cite[VI, Proposition~2.1]{S-75}. The latter is equivalent, by~\cite[VI, Proposition~2.3]{S-75}, to saying that  $\injt$ is an idempotent radical, which is not always the case by Remark~\ref{R:low-torsion}.}
\end{remark}

\begin{definition}\label{D:torsionfree-mod}
 A \textcolor{purple}{right} module $A$ is said to be \texttt{torsion-free} if $ \injt(A) = \{0\}$ or, equivalently, the map  $1 \otimes \iota : A \otimes \Lambda \to A \otimes I$ is monic. 
\end{definition}

The short exact sequence of endofunctors
\begin{equation}\label{Eq:s-inverse}
 0 \lra \injt \lra \mathbf{1} \lra \mathbf{1}/\injt \lra 0
\end{equation}
on the category of all right $\Lambda$-modules makes it reasonable to set $\injt^{-1} := \mathbf{1}/\injt$. Theorem~\ref{T:residual-torsion} justifies the following definition.

\begin{definition}
The quotient module $\injt^{-1}(A)$ will be called the torsion-free quotient module of $A$.
\end{definition}

\begin{proposition}\label{P:tf-sub-prod}
\textcolor{purple}{The class of torsion-free modules is closed under submodules and products.}
\end{proposition}

\begin{proof}
\textcolor{purple}{This result holds in greater generality: the torsion-free class of any preradical is closed under submodules and products, see~\cite[VI, Proposition 1.2]{S-75}.}
\end{proof}

It is a general property of radical functors that the corresponding torsion-free class\footnote{Strictly speaking, this should be called the \texttt{pretorsion-free} class.} is a reflective subcategory. In our case, this means that $\injt^{-1}$, called a reflector and viewed as a functor from the category of all modules to the category of torsion-free modules (but not all modules!) is left adjoint to the inclusion functor. Thus, we have 

\begin{proposition}\label{P:reflector-colim}
The functor $\injt^{-1}$ from the category of all \textcolor{purple}{right} modules to the category of torsion-free \textcolor{purple}{right} modules preserves all colimits. In particular, the class of torsion-free modules has all colimits.  \qed
\end{proposition}

\begin{remark}
It is tempting to use the short exact sequence~\eqref{Eq:s-inverse} to show that~$\injt$ preserves all colimits, too. However such an argument would be incorrect since, in that sequence, $\injt^{-1}$ denotes a functor from the category of all modules to itself, whereas the codomain of $\injt^{-1}$ in the proposition is the subcategory determined by the torsion-free modules. In fact, as we shall see in Proposition~\ref{P:s-epi} below, $\injt$ preserves all colimits if and only if it is the zero functor. It is a general fact that colimits in a reflective subcategory can be computed as images under the reflector (in our case, $\injt^{-1}$) of colimits in the ambient category.
\end{remark}

The next three results deal with the vanishing and the exactness properties of the injective torsion functor. Since tensoring with a flat module is an exact functor, we have

\begin{lemma}\label{L:flat=0}
If $A$ is flat, then $\injt(A) = \{0\}$, i.e., each flat module is torsion-free. \qed
\end{lemma}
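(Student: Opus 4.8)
The plan is simply to unwind the definition of $\injt$ in the $\Lambda$-component and then invoke flatness. Recall (from the exact sequence of functors $0 \to \blank \ot \Lambda \to \blank \otimes \Lambda \to \blank \otimes I$ used in the proof of Proposition~\ref{P:s-comm-colim}, together with the identification $\injt = \blank \ot \Lambda$ and the canonical isomorphism $A \otimes \Lambda \cong A$) that $\injt(A)$ is, functorially in $A$, the kernel of the map
\[
1_{A} \otimes \iota \colon A \otimes \Lambda \lra A \otimes I,
\]
where $\iota \colon \Lambda \to I$ is the injective envelope of $_{\Lambda}\!\!\Lambda$. This is precisely the identification already recorded under the square~\eqref{Eq:mu-square}, where it is noted that $\injt(A) = \Ker(1 \otimes \iota)$.

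First I would observe that $\iota$ is a monomorphism, being an essential embedding of $\Lambda$ into its injective envelope. Next, since $A$ is flat, the functor $A \otimes \blank$ is exact and in particular preserves monomorphisms; hence $1_{A} \otimes \iota$ is a monomorphism. Therefore its kernel vanishes, i.e., $\injt(A) = \{0\}$. The final clause — that every flat module is torsion-free — is then immediate from Definition~\ref{D:torsionfree-mod}.

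There is essentially no obstacle here: the argument is a one-line consequence of the defining exact sequence for $\blank \ot \Lambda$ and the exactness of tensoring with a flat module. The only point requiring (minor) care is the correct bookkeeping of which morphism $\injt(A)$ is the kernel of, and this has already been made explicit in the material preceding the lemma.
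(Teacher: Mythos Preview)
Your argument is correct and is exactly the paper's reasoning: the sentence immediately preceding the lemma (``Since tensoring with a flat module is an exact functor, we have'') is the whole proof, and you have simply spelled it out by noting that $\injt(A) = \Ker(1_A \otimes \iota)$ with $\iota$ monic.
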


\textcolor{purple}{In view of Corollary~\ref{C:s-pres-mono}, we have}

\begin{corollary}
 \textcolor{purple}{If $A$ is flat, then $\injt(B) = 0$ for any submodule $B$ of $A$}. \qed
\end{corollary}

\begin{remark}
\textcolor{purple}{Specializing, in the preceding corollary, to the case of a finitely presented submodule $B$, we have, in view of Proposition~\ref{P:fp implies s=t}, 
%and a theorem of Lazard~\cite[Th\'{e}or\`{e}me 1.2]{L} 
that a finitely presented submodule of a flat module is torsionless (i.e., its Bass torsion submodule is zero).}
\end{remark}

\begin{proposition}\label{P:s-epi}
The following conditions are equivalent \textcolor{blue}{for the torsion radical~$\injt$ on right 
$\Lambda$-modules}:

\begin{itemize}
 \item[a)] $\injt$ preserves epimorphisms;
 
 \item[b)] $\injt$ is the zero functor;
 
 \item [(c)] \textcolor{purple}{$\injt$ vanishes on finitely presented modules};
 
 \item[d)] \textcolor{purple}{$\injt$ vanishes on injectives};
 
 \item[e)] $\prescript{}{\Lambda}{\Lambda}$ is absolutely pure;
 
 \item[f)] $\Lambda$ is \texttt{left} $FP$-injective, i.e., $\Ext^{1}_{\Lambda}(M, \prescript{}{\Lambda}{\Lambda}) = \{0\}$ for all finitely presented left  $\Lambda$-modules $M$.
\end{itemize}

In particular, if $\Lambda$ is selfinjective on the left, then $\injt$ is the zero functor \textcolor{purple}{(on right modules)}.

\textcolor{purple}{If $\Lambda$ is two-sided noetherian, then the above conditions are equivalent to $\Lambda$ being a \texttt{right} IF ring (i.e., all injective right $\Lambda$-modules being flat)}.
\end{proposition}

\begin{proof}
 Given any \textcolor{purple}{right module} $A$, choose an epimorphism $P \to A \to 0$ with $P$ projective. Assuming that $\injt$ preserves epimorphisms, we have an epimorphism $\injt(P) \to \injt(A) \to 0$. By Lemma~\ref{L:flat=0}, $\injt(P) = \{0\}$ and, therefore, 
 $\injt(A) = \{0\}$, thus proving the implication a) $\Rightarrow$ b). The converse is trivial. 

\textcolor{purple}
{To show the equivalence of b) and c), pick any right module and represent it as a filtered colimit of finitely presented modules. The result now follows from the fact that $\injt$ preserves filtered colimits (Proposition~\ref{P:s-comm-colim}).}

\textcolor{purple}{Since any module can be embedded in an injective module, Corollary~\ref{C:s-pres-mono} shows the equivalence of b) and d).}
 
\textcolor{purple}{To prove the remaining equivalences, notice that~$\injt$ is the zero functor if and only if $A \otimes \Lambda \to A \otimes I$ is monic for any $A$. In terminology of~\cite{Mad}, this means that $\prescript{}{\Lambda}{\Lambda}$ is absolutely pure, which is equivalent to the condition $\Ext^{1}_{\Lambda}(M, \prescript{}{\Lambda}{\Lambda}) = \{0\}$ for all finitely presented left $\Lambda$-modules $M$~\cite[Proposition 1]{Meg}.}

\textcolor{purple}{Finally, assume that $\Lambda$ is two-sided noetherian. If all injective right $\Lambda$-modules are flat, then d) follows immediately (even without the noetherian assumption). Conversely, suppose the above conditions hold and let $J$ be an injective right $\Lambda$-module. The fact that $J$ is flat then follows from a result of H.~Sato~\cite[Lemma 1.4]{Sa}. For the convenience of the reader, we recall his argument, which relies on the theorem of Lazard~\cite[Th\'{e}or\`{e}me 1.2]{L} characterizing flat modules. Thus let $M$ be any finitely presented right module and $f : M \lra J$ an arbitrary homomorphism. Since $\Lambda$ is noetherian, $f(M)$, being finitely generated, is finitely presented and therefore 
$\mathfrak{t}(f(M)) = \injt(f(M)) = 0$, where the last equality holds by the assumption. Choose an epimorphism $g : P \lra f(M)$ with $P$ finitely generated projective. Dualizing into the ring, we have that $f(M)^{\ast}$ is a submodule of the finitely generated projective $P^{\ast}$, and, by the noetherian assumption is also finitely generated. Choose an epimorphism $h : Q \lra f(M)^{\ast}$ with $Q$ finitely generated projective. Dualizing again, we have that $f(M)^{\ast\ast}$ is a submodule of the finitely generated 
projective~$Q^{\ast}$. Since $f(M)$ embeds in $f(M)^{\ast\ast}$ we have that it also embeds in $Q^{\ast}$. By the injectivity of $J$, the embedding of $f(M)$ in $J$ can be extended to $Q^{\ast}$, which shows that $f$ factors through a finitely generated projective. By the theorem of Lazard,~$J$ is flat.}
\end{proof}

\textcolor{purple}{The just proved result allows to prove the nonexistence of a functorial splitting for the injective torsion.} 

\begin{corollary}
 \textcolor{purple}{Suppose the injective torsion splits off as a functor, i.e., the exact sequence $0 \lra \injt \lra \mathbf{1} \lra \injt^{-1} \lra 0$ splits. Then $\injt =0$, i.e., $\Lambda$ is left absolutely pure.}
 \end{corollary}

\begin{proof}
 \textcolor{purple}{Under the assumption, $\injt$ becomes a quotient functor of the identity functor~$\mathbf{1}$ and hence preserves epimorphisms. The result follows.}
\end{proof}

\begin{remark}
\textcolor{blue}{The foregoing corollary and the proof of the implication a) $\Rightarrow$ b) of Proposition~\ref{P:s-epi} show that the existence of a functorial splitting of $\injt$ restricted to a full subcategory of right $\Lambda$-module having enough projectives still implies the vanishing of $\injt$ restricted to that category.}
 \end{remark}

\begin{proposition}\label{P:s-of-pex-is-ex}
If $0 \to A' \to A \to A'' \to 0$ is a pure exact sequence, then the \textcolor{purple}{induced} sequence 
 \[
 0 \lra \injt(A') \lra \injt(A) \lra \injt(A'') \to 0
 \]
 is exact.
\end{proposition}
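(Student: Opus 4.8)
The plan is to deduce the statement from the snake lemma. Recall, from the exact sequence of functors used in the proof of Proposition~\ref{P:s-comm-colim}, that for every right module $M$ one has $\injt(M) = \Ker\bigl(1 \otimes \iota \colon M \otimes \Lambda \to M \otimes I\bigr)$, where $\iota \colon {}_{\Lambda}\Lambda \to I$ is the injective envelope of $\Lambda$. Tensoring the given pure exact sequence $0 \to A' \to A \to A'' \to 0$ first with $\Lambda$ and then with $I$, and using the naturality of $1 \otimes \iota$ in the first variable, produces a commutative diagram with exact rows
\[
\xymatrix{
0 \ar[r] & A' \otimes \Lambda \ar[r] \ar[d] & A \otimes \Lambda \ar[r] \ar[d] & A'' \otimes \Lambda \ar[r] \ar[d] & 0 \\
0 \ar[r] & A' \otimes I \ar[r] & A \otimes I \ar[r] & A'' \otimes I \ar[r] & 0
}
\]
in which the vertical arrows are the relevant instances of $1 \otimes \iota$. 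The top row is exact because $\blank \otimes \Lambda$ is naturally isomorphic to the identity functor; the bottom row is exact because purity keeps $\blank \otimes I$ left exact and $\otimes$ is always right exact. The kernels of the three vertical maps are, respectively, $\injt(A')$, $\injt(A)$, $\injt(A'')$.

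Applying the snake lemma yields an exact sequence
\[
0 \to \injt(A') \to \injt(A) \to \injt(A'') \overset{\delta}{\lra} C' \to C \to C'' \to 0,
\]
where $C'$, $C$, $C''$ denote the cokernels of the three vertical maps. Right exactness of the tensor product applied to $0 \to \Lambda \overset{\iota}{\to} I \to I/\Lambda \to 0$ identifies these cokernels naturally with $A' \otimes (I/\Lambda)$, $A \otimes (I/\Lambda)$, $A'' \otimes (I/\Lambda)$, the map $C' \to C$ becoming the map $A' \otimes (I/\Lambda) \to A \otimes (I/\Lambda)$ induced by $A' \to A$.

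Everything is then in place except the surjectivity of $\injt(A) \to \injt(A'')$, which, by exactness of the snake sequence at $C'$, is equivalent to the vanishing of $\delta$, hence to the injectivity of $C' \to C$. This is precisely where the hypothesis is used once more: tensoring the pure exact sequence with the left module $I/\Lambda$ preserves left exactness, so $A' \otimes (I/\Lambda) \to A \otimes (I/\Lambda)$ is monic, $\delta = 0$, and the sequence $0 \to \injt(A') \to \injt(A) \to \injt(A'') \to 0$ is exact. I do not expect any genuine difficulty here; the one point worth isolating is that purity must be invoked not merely to keep $\blank \otimes I$ left exact, but also, more essentially, to make $\blank \otimes (I/\Lambda)$ left exact, which is what kills the connecting map. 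As an alternative, one could instead write the pure exact sequence as a filtered colimit of split exact sequences and use that $\injt$ is additive (hence sends split exact sequences to split exact sequences) and commutes with filtered colimits by Proposition~\ref{P:s-comm-colim}.
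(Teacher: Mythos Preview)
Your argument is correct and is precisely the approach the paper takes: its entire proof reads ``Follows immediately from the snake lemma,'' and what you have written is a careful unpacking of that one line. Your observation that purity must be invoked a second time, to make $A'\otimes(I/\Lambda)\to A\otimes(I/\Lambda)$ monic and hence kill the connecting map, is the only nontrivial point hidden in the paper's terse statement, and you have isolated it correctly.
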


\begin{proof}
\textcolor{purple}{A pure exact sequence is a directed colimit of split exact sequences. Since~$\injt$ preserves filtered colimits, the sequence in question is a filtered colimit of similar sequences arising from split exact sequences. Each such sequence is exact 
because~$\injt$ is an additive functor. The result now follows since filtered colimits preserve exactness.}
\smallskip
\newline\textit{Second Proof.} Tensor the cosyzygy sequence $0 \to \Lambda \to I \to \Sigma \Lambda \to 0$ with the given pure exact sequence and use the snake lemma.\footnote{\textcolor{purple}{The first proof was suggested by the referee. We also keep the original proof because it applies to abelian categories without \textbf{AB5}.}}
\end{proof}

\textcolor{purple}{The next observation was suggested by the referee.}

\begin{corollary}
 \textcolor{purple}{Both the torsion-free and the torsion class of $\injt$ are closed under pure extensions.} \qed
\end{corollary}

Now we want to examine the properties of the torsion functor 
$\injt$ when the injective envelope of $\prescript{}{\Lambda}{\Lambda}$ is flat. 
 
\begin{proposition}\label{P:inj-torsion}
Suppose the injective envelope of $\prescript{}{\Lambda}{\Lambda}$ is flat. Then:
\begin{enumerate}

\item $\injt \simeq \Tor_{1}(\blank, \Sigma \Lambda)$.
\smallskip

\item  If $0 \to A' \to A \to A'' \to 0$ is a short exact sequence of right 
$\Lambda$-modules, then \textcolor{purple}{there is a homomorphism $\delta : \injt(A'') \to A' \otimes \Sigma \Lambda$ such that} the induced sequence 
 \[
 0 \to \injt(A')  \to  \injt(A) \to  \injt(A'') \overset{\textcolor{purple}{\delta}}\to
 A' \otimes \Sigma \Lambda \to A \otimes \Sigma \Lambda \to A'' \otimes \Sigma \Lambda \to 0
 \]
is exact. In particular, $\injt$ is left-exact.
\smallskip 

\item The natural transformation $\injt(\injt \to 1) = \injt^{2} \to \injt$ is an isomorphism, i.e., \textcolor{purple}{$\injt$ is an idempotent radical and thus the torsion class of $\injt$ is the torsion class of a torsion theory.} \textcolor{purple}{That torsion theory is hereditary, i.e., its torsion class is closed under submodules.}
\smallskip

\item The torsion-free class of $\injt$ is closed under extensions.
\smallskip
\item \textcolor{purple}{The torsion class of $\injt$ is closed under extensions}.
\end{enumerate}
\end{proposition}

\begin{proof}

(1) Tensor the short exact sequence 
\[
0 \lra \Lambda \lra I \lra \Sigma \Lambda \lra 0
\]
with an arbitrary right module and pass to the long exact sequence. 
\smallskip
 
(2) Follows from \textcolor{purple}{the definition of $\injt$ and} the snake lemma; for \textcolor{purple}{$\delta$ take the connecting homomorphism.}
\smallskip
 
 (3) By (2), $\injt$ is left-exact. Thus the short exact sequence 
\[
0 \lra \injt(A) \lra A \lra \injt^{-1}(A) \lra 0
\] 
gives rise to the exact sequence 
\[
0 \lra \injt^2(A) \lra \injt(A) \lra \injt(\injt^{-1}(A)),
\] 
where $\injt(\injt^{-1}(A)) = \{0\}$ by Theorem~\ref{T:residual-torsion}, \textcolor{purple}{showing that $\injt$ is idempotent. By~\cite[VI, Proposition~2.3]{S-75}, idempotent radicals bijectively correspond to torsion theories, which proves the first claim.  The second claim follows from (2) and the bijective correspondence~\cite[VI, Proposition 3.1]{S-75} between hereditary torsion theories and left-exact radicals.}
\smallskip

(4) Apply $\injt$ to a short exact sequence $ 0\to A'\to A\to A''\to 0$. The resulting exact sequence $0\to  \injt(A')\to \injt(A)\to \injt(A'')$ shows that if $\injt(A')$ and $\injt(A'')$ vanish, then so does $\injt(A)$, which means that the torsion-free class is closed under extensions.

(5) \textcolor{purple}{As we already mentioned, the torsion class of $\injt$ is the torsion class of a torsion theory. Such classes are closed under extensions~(\cite[VI, Proposition 2.1]{S-75}). }
\end{proof}

\begin{remark}
\textcolor{blue}{Several types of rings are known to have a flat injective envelope. 
The field of quotients of a commutative domain is its injective envelope. Tensoring with the field of quotients amounts to the localization at the zero ideal, and since localization is an exact functor, the field of quotients is flat. More generally, the injective envelope of a two-sided Ore domain (on either side) is flat; this follows from~\cite[Exercise~10.20]{Lam}. Commutative Gorenstein rings also have this property~\cite[Proposition~5.1.2 and Lemma~5.1.1]{Xu}. For various characterizations of commutative noetherian rings with a flat envelope, see~\cite{KS} and~\cite{K-A}. There are also several obvious choices, for example the (right) IF rings, i.e., the rings all of whose (right) injectives are flat. Such rings were mentioned in Proposition~\ref{P:s-epi}, which shows that the only possibly interesting cases are the ones with non-noetheiran rings. See~\cite[Theorems 6.8 - 6.11]{Faith} for characterizations of IF rings. Another obvious choice is the class of all rings whose injective envelope is projective. The (left) hereditary rings with this property have been classified~\cite[Theorem 3.2]{CR}; based on that result, it was later shown that those are precisely the left hereditary rings whose category of left modules modulo projectives is abelian~\cite[Theorem 9.5]{MZ}.  }
\end{remark}

\renewcommand{\mod}{\mathrm{mod}}

Now we want to give another description of the torsion submodule, this time using the notion of colimit extension. \textcolor{purple}{Recall that any additive functor $F$ on the category of finitely presented right $\Lambda$-modules extends uniquely, up to isomorphism, to a functor $\overset{\to}{F}$, defined on the category of all modules, which preserves filtered colimits. $\overset{\to}{F}$ will be called here the colimit extension of $F$; see~\cite[Section 10]{MR-1} for more details.}
\smallskip

Let~$\mathscr{F}$ denote the class of flat right $\Lambda$-modules, and $Rej(A, \mathscr{F})$ -- the reject of~$\mathscr{F}$ in the right module $A$; \textcolor{purple}{see~\cite[pp. 109-112]{AF-92} for details on the reject}. The restriction of $Rej(\blank, \mathscr{F})$ to the full subcategory  determined by the finitely presented modules will be denoted by $rej(\blank, \mathscr{F})$.

\begin{proposition}\label{P:s=colim-ext}
 $\injt \simeq \overset{\lra}{rej}(\blank, \mathscr{F})$, i.e., the torsion functor is isomorphic to the colimit extension of the reject of flats restricted to finitely presented modules.
\end{proposition}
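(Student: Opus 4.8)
The plan is to reduce the statement to the behavior on finitely presented modules, where both sides are already understood, and then invoke the fact that both functors commute with filtered colimits. First I would recall that, by definition, the colimit extension $\overset{\lra}{rej}(\blank, \mathscr{F})$ is the left Kan extension along the inclusion $\mod\text{-}\Lambda \hookrightarrow \Mod\text{-}\Lambda$ of the restriction $rej(\blank, \mathscr{F})$ of $Rej(\blank, \mathscr{F})$ to finitely presented modules; in particular it is, up to isomorphism, the unique filtered-colimit-preserving functor $\Mod\text{-}\Lambda \to \ab$ whose restriction to $\mod\text{-}\Lambda$ agrees with $rej(\blank, \mathscr{F})$. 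By Proposition~\ref{P:s-comm-colim}, $\injt$ commutes with filtered colimits, so it suffices to produce a natural isomorphism $\injt(A) \simeq Rej(A, \mathscr{F})$ for $A$ finitely presented, together with a check that this isomorphism is natural in $A$ over the subcategory $\mod\text{-}\Lambda$.

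So the heart of the argument is the identification, for finitely presented $A$, of $\injt(A)$ with the reject of the class of flats. Here I would use Proposition~\ref{P:fp implies s=t}: for finitely presented $A$ we have $\injt(A) = \mathfrak{t}(A) = \Ker(e_A : A \to A^{\ast\ast})$. So I must show $\Ker(e_A) = Rej(A, \mathscr{F})$, i.e., that the kernel of the biduality map is exactly the intersection of the kernels of all homomorphisms from $A$ to flat modules. One inclusion is essentially formal: $A^{\ast\ast} = \Hom_\Lambda(A^\ast, \Lambda)$ is a submodule of a product of copies of $\Lambda$ — actually it need not be flat, so this needs care. The cleaner route is: every $\Lambda$-linear map $A \to F$ with $F$ flat kills the torsion, because flats are torsion-free (Lemma~\ref{L:flat=0}) and $\injt$ is a subfunctor of the identity, so $\injt(A)$ maps into $\injt(F) = 0$; this gives $\injt(A) \subseteq Rej(A, \mathscr{F})$. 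For the reverse inclusion I would exhibit a single flat module $F$ and a map $A \to F$ whose kernel is exactly $\injt(A)$ when $A$ is finitely presented. The natural candidate is to embed $A/\injt(A)$, which is finitely presented and 1-torsion-free, into a flat module — but in general a finitely presented 1-torsion-free module need not embed in a flat one. Instead I would work with $A^{\ast\ast}$ directly, or better, use a finite projective presentation $P_1 \to P_0 \to A \to 0$: dualizing gives $0 \to A^\ast \to P_0^\ast \to P_1^\ast$, and dualizing again produces a map from $A$ into something built out of $P_0^{\ast\ast} \cong P_0$, a (finitely generated projective, hence flat) module, with kernel $\mathfrak{t}(A)$. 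Concretely, the composite $A \xrightarrow{e_A} A^{\ast\ast} \hookrightarrow P_0^{\ast\ast} \cong P_0$ has kernel $\Ker(e_A) = \mathfrak{t}(A) = \injt(A)$, and $P_0$ is flat, so $Rej(A,\mathscr{F}) \subseteq \injt(A)$. Combining the two inclusions gives $\injt(A) = Rej(A, \mathscr{F})$ for finitely presented $A$.

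Finally I would assemble the pieces: the equality just established is visibly natural in $A$ (it is induced by inclusions of subobjects of the identity functor, or one can check directly using functoriality of $e$ and of the projective presentations), so $\injt|_{\mod\text{-}\Lambda} \simeq rej(\blank, \mathscr{F})$. Since $\injt$ preserves filtered colimits and every right module is a filtered colimit of finitely presented ones, the universal property of the colimit extension yields $\injt \simeq \overset{\lra}{rej}(\blank, \mathscr{F})$ on all of $\Mod\text{-}\Lambda$.

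I expect the main obstacle to be the reverse inclusion $Rej(A, \mathscr{F}) \subseteq \injt(A)$ for finitely presented $A$: one must produce an explicit flat module together with a map out of $A$ detecting precisely the complement of the torsion, and the subtlety is that the obvious candidate $A^{\ast\ast}$ is generally neither flat nor a submodule of an evidently flat module — the fix is to factor the biduality map through $P_0 \cong P_0^{\ast\ast}$, using the finite projective presentation, where flatness is automatic.
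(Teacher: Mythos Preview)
Your overall architecture matches the paper's exactly: establish $\injt(A) = Rej(A,\mathscr{F})$ for finitely presented $A$, then pass to colimit extensions using Proposition~\ref{P:s-comm-colim}. The inclusion $\injt(A) \subseteq Rej(A,\mathscr{F})$ via Lemma~\ref{L:flat=0} is also the paper's argument.

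The reverse inclusion, however, contains an actual error. You claim a composite $A \xrightarrow{e_A} A^{\ast\ast} \hookrightarrow P_0^{\ast\ast}$, but no such embedding $A^{\ast\ast} \hookrightarrow P_0^{\ast\ast}$ exists: the bidual $(\blank)^{\ast\ast}$ is \emph{covariant}, so the epimorphism $P_0 \twoheadrightarrow A$ induces a map $P_0^{\ast\ast} \to A^{\ast\ast}$, not the other way around. (Concretely, take $\Lambda$ commutative and $A = \Lambda/I$ with $P_0 = \Lambda$; there is no reason for $\Hom(\mathrm{ann}(I),\Lambda)$ to embed in $\Lambda$.) So your specific construction of a flat target detecting the torsion does not work.

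The paper's fix is much simpler and avoids the issue entirely. Observe that the 1-torsion $\mathfrak{t}(A) = \Ker e_A$ is, by its very definition, the reject of the single module $\Lambda$: an element $a$ lies in $\Ker e_A$ iff $f(a) = 0$ for every $f \in A^{\ast} = (A,\Lambda)$. Since $\Lambda$ is flat, the class $\{\Lambda\}$ is contained in $\mathscr{F}$, whence $Rej(A,\mathscr{F}) \subseteq Rej(A,\{\Lambda\}) = \mathfrak{t}(A)$ for \emph{every} $A$. This yields the chain $\injt \subseteq Rej(\blank,\mathscr{F}) \subseteq \mathfrak{t}$, which collapses on finitely presented modules by Proposition~\ref{P:fp implies s=t}. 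No auxiliary flat module needs to be constructed.
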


\begin{proof}
 It is easy to see that the Bass torsion functor $\mathfrak{t}$ is just the reject of  $\Lambda$. As~$\Lambda$ is flat, ${Rej}(\blank, \mathscr{F})$ is a subfunctor of $\mathfrak{t}$. On the other hand, let $F$ be a flat module and $f : A \to F$ an arbitrary homomorphism. Since torsion is a subfunctor of the identity functor, we have a commutative diagram 
 \[
\xymatrix
	{
	\injt(A) \ar[r] \ar[d]_{\injt(f)}
	& A\ar[d]^{f}
\\
	\injt(F)  \ar[r]
	& F
	}
\] 
By Lemma~\ref{L:flat=0}, $\injt(F) = \{0\}$, and therefore torsion is a subfunctor of the reject. Thus we have a chain of functors
\[
\injt \subseteq {Rej}(\blank, \mathscr{F}) \subseteq \mathfrak{t}
\]
Restricting it to finitely presented modules we have an identification of the end terms. Therefore ${rej}(\blank, \mathscr{F})$ coincides with $\injt$ restricted to finitely presented modules. Passing to the colimit extensions and using the fact that $\injt$ preserves filtered colimits, we have the desired result.
\end{proof}

This alternative description of the torsion functor~$\injt$, together with the notion of right cosatellite~\cite[Definition 5.7]{MR-1} and~\cite[Proposition 5.9]{MR-1}, show that 
\[
\injt^{-1}(A) \simeq C^{1}(A \otimes \blank)(\Lambda),
\]
which allows us to rewrite the short exact sequence 
\[
0 \lra \injt(A) \lra A \lra \injt^{-1}(A) \lra 0
\]
in the following form
\begin{equation}\label{Eq:Rej}
 0 \lra \overset{\lra}{rej}(A, \mathscr{F}) \lra A \lra C^{1}(A \otimes \blank)(\Lambda)
\lra 0.
\end{equation}
In summary,
\begin{equation}\label{Eq:Rej-mult}
 \injt(A) = \overset{\lra}{rej}(A, \mathscr{F}) \quad \text{and} \quad
\injt^{-1}(A) = C^{1}(A \otimes \blank)(\Lambda)
\end{equation}
\smallskip

\begin{corollary}
The right cosatellite $C^{1}(A \otimes \blank)(\Lambda)$, viewed as a functor of $A$ from all modules to torsion-free modules, preserves all colimits.
\end{corollary}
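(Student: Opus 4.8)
The plan is to observe that this corollary is essentially a restatement of Proposition~\ref{P:reflector-colim} through the identification recorded in~\eqref{Eq:Rej-mult}. Concretely, the preceding discussion establishes a natural isomorphism $\injt^{-1}(A) \simeq C^{1}(A \otimes \blank)(\Lambda)$, functorial in $A$; this is the content of the second formula in~\eqref{Eq:Rej-mult}, obtained by combining Proposition~\ref{P:s=colim-ext} with~\cite[Proposition~4.9]{MR-1} and the fact that $\injt$ preserves filtered colimits. So the first step is simply to invoke this isomorphism of functors.

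Next, I would note that, by Theorem~\ref{T:residual-torsion}, $\injt^{-1}(A)$ is torsion-free for every $A$, so that $C^{1}(A \otimes \blank)(\Lambda)$ does land in the subcategory of torsion-free modules and the two sides of the isomorphism are being compared as functors into the same target category. Then I would cite Proposition~\ref{P:reflector-colim}, which says precisely that $\injt^{-1}$, regarded as a functor from $\Mod$-$\Lambda$ to the reflective subcategory of torsion-free modules, preserves all colimits (being a left adjoint to the inclusion). Transporting this property along the natural isomorphism $\injt^{-1} \simeq C^{1}(A \otimes \blank)(\Lambda)$ yields the claim.

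The only point that warrants a word of caution — and what I would flag explicitly — is the one already raised in the Remark following Proposition~\ref{P:reflector-colim}: the colimits in question are colimits computed in the category of torsion-free modules, not in $\Mod$-$\Lambda$, and in the ambient category these are obtained by applying the reflector $\injt^{-1}$ to the ordinary colimit. There is no claim (and indeed, by Proposition~\ref{P:s-epi}, no hope in general) that $A \mapsto C^{1}(A \otimes \blank)(\Lambda)$ preserves colimits into $\Mod$-$\Lambda$. So the statement and its proof must be read entirely inside the reflective subcategory, where it is then immediate. Since every ingredient is already in place, there is no real obstacle here; the proof is a two-line citation of~\eqref{Eq:Rej-mult} and Proposition~\ref{P:reflector-colim}.
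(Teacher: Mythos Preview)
Your proposal is correct and matches the paper's own proof, which simply reads ``Follows from Proposition~\ref{P:reflector-colim}.'' You have merely unpacked that one-line citation by invoking the identification~\eqref{Eq:Rej-mult} explicitly and adding the (appropriate) caveat about colimits being taken in the reflective subcategory.
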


\begin{proof}
 Follows from Proposition~\ref{P:reflector-colim}.
\end{proof}

We end the discussion of torsion by looking at its iteration. This yields a descending chain 
\begin{equation}\label{chain}
 A \supset \injt(A) \supset \ldots \supset \injt^{n}(A) \supset \ldots
\end{equation}
of submodules of $A$. By~\cite[Lemma~4.1]{MR-1},
%\marginpar{L:canonical}, 
these submodules are determined uniquely not only as modules up to isomorphism but also as submodules. 
It would be of interest to investigate this chain and, in particular, its asymptotic properties. We only know its behavior in some simple cases. Recapping Lemma~\ref{L:flat=0}, Proposition~\ref{P:s-epi}, Proposition~\ref{P:inj-torsion}, and Corollary~\ref{C:s-pres-mono}, we have:

\begin{itemize}
 \item If $A$ is flat, then $\injt(A) = \{0\}$.
 \smallskip
 
 \item If $\Lambda$ is selfinjective, then $\injt(A) = \{0\}$ for all $A$.
  \smallskip
  
  \item If the injective envelope of $\Lambda$ is flat, then $\injt^{2} = \injt$ and the chain stabilizes at the first step. This generalizes the same property for classical torsion over commutative domains. 
  \smallskip
 
 \item The injective torsion of any submodule of a flat module is zero. In particular, this holds for syzygy modules in projective resolutions.
\end{itemize}

It is easy to construct modules $A$ with $\injt(A) = A$. This happens, for example, when $A \otimes I = \{0\}$, where $I$ is the injective envelope of 
$\Lambda$. Over an arbitrary ring, all nonzero finitely presented modules which coincide with their injective torsion (i.e., with their Bass torsion) can be characterized: each such module is the transpose of a module of projective dimension one~\cite[Proposition 5]{M-10}.

Finally, assume that $\Lambda$ is a commutative artinian local ring and $A$ is an arbitrary finitely generated $\Lambda$-module. It was shown 
in~\cite[Proposition 17]{M-10} that $\mathfrak{t}(A)$ does not contain minimal generators of $A$. Since in this case  $\injt(A) = \mathfrak{t}(A)$, we have that: a) $\injt(A)$ is a submodule of \textcolor{purple}{the radical of} $A$, b) the chain~\eqref{chain} stabilizes at $\{0\}$, and c) the length of the chain does not exceed the Loewy length of $A$.

\section{Cotorsion over arbitrary rings}\label{S:cotorsion}
%\subsection{Cotorsion via the projective stabilization of the contravariant Hom}

Given the generality of our definition of torsion, one may ask if that definition can be formally dualized to yield a definition of cotorsion. The answer is yes and, as we shall see, this can be done with little effort. However, compared with torsion, one faces the surprising fact that while there have been several competing definitions of cotorsion modules, there seems to be no definition of the \texttt{cotorsion module of a module}. Our next goal is to provide such a definition for arbitrary modules over arbitrary rings. \textcolor{purple}{Our plan is to introduce a certain quotient functor of the identity functor, i.e., a precoradical, and then define the cotorsion module of a module as the image of the quotient transformation computed at the module. We shall also show that our precoradical is in fact a coradical.} 

Recall that the injective torsion submodule $\injt(A) = (A \ot \blank)(\Lambda)$ of a right $\Lambda$-module $A$ was defined as the kernel of the map $A \otimes \Lambda \to A \otimes I$, where $I$ is an injective container of $\prescript{}{\Lambda}{\Lambda}$. The fact that $\injt(A)$ is a submodule of $A$ is due in part to the canonical isomorphism $A \otimes \Lambda  \cong A $. In a dual approach, it could be expected that one should use the other well-known canonical isomorphism, 
$\Hom(\Lambda, C)  \cong C$, where $C$ is an arbitrary (say, left) $\Lambda$-module. This leads to the contravariant functor $\Hom(\blank, C)$. Since the torsion \textcolor{purple}{submodule of $A$} was defined as \textcolor{purple}{the value of} the \texttt{injective} stabilization of the functor $A \otimes \blank$ \textcolor{purple}{at $\Lambda$}, in a dual approach one should be looking at the \texttt{projective} stabilization of  the contravariant functor $\Hom(\blank, C)$ \textcolor{purple}{evaluated at $\Lambda$}.\footnote{\textcolor{purple}{The hypothetical choice between projective or injective stabilization is imaginary: since $\Hom$ is left-exact in both variables, the injective stabilization of any of them is zero.}} Viewing $\Hom(\blank, C)$ as a \texttt{covariant} functor on the opposite category (which is never a module category but is still abelian) one is led to consider the contravariant Hom functor modulo injectives, customarily denoted by $\oh$.\footnote{Warning: we repeat that this is not the injective stabilization of \textcolor{purple}{any of the two Hom functors}, which are zero. We are forced to use this confusing notation since, historically, overline denotes the injective stabilization of a functor, but in the case of the Hom functors it \textcolor{purple}{also} denotes Hom modulo injectives. \textcolor{purple}{See a more detailed discussion of this below, between Theorems~\ref{T:DA} and~\ref{T:cotorthm}}.}
 This motivates

\begin{definition}\label{D:cotorsion}
 Let $C$ be a left $\Lambda$-module. The cotorsion module of~$C$ is defined as $\injc(C) : = \underline{\Hom(\blank, C)}(\prescript{}{\Lambda}{\Lambda}) = \oh(\prescript{}{\Lambda}{\Lambda}, C)$.\footnote{The term in the middle denotes the projective stabilization of the \texttt{contravariant} functor $\Hom(\blank, C)$.} 
\end{definition}

Dropping the variable $C$ and switching to the categorical notation (i.e., dropping the symbol $\Hom\!$), we have 
$\injc = (\overline{\Lambda, \blank})$. In the spirit of Definition~\ref{D:tensor-active-vs-inert} we can now say that the cotorsion functor is the inert projective stabilization of the contravariant  functor $\Hom\!$.

In more detail, if 
\begin{equation}\label{cosyz-seq}
0 \lra \Lambda \overset{\iota}\lra I \lra \Sigma \Lambda \lra 0
\end{equation}
is an injective cosyzygy sequence, then the cotorsion module of $C$ is defined by the exact sequence

\begin{equation}\label{cotorsion-def}
 0 \lra (\Sigma\Lambda, C) \lra (I, C) \overset{(\iota, C)}\lra (\Lambda, C) \lra (\overline{\Lambda, C}) \lra 0.
\end{equation}
\textcolor{purple}{To justify the notation for the last term in this sequence, we first remark that any map $\Lambda \to C$ in the image of $(\iota, C)$  factors through the injective $I$ via $\iota$. On the other hand, if $\Lambda \to C$ factors through an injective, then, by~\eqref{cosyz-seq} and the defining property of injective modules,  this map extends along $\iota$. Thus the image of $(\iota, C)$ consists of all maps that factor through injectives, justifying the notation for the last term. }

The reader should verify that the contravariant Hom modulo injectives fits a pattern dual to that of the injective stabilization. In fact, for any additive functor~$F$, \textcolor{purple}{covariant or contravariant}, from modules to abelian groups, the projective stabilization of $F$ is defined as the cokernel of the natural transformation $L_{0}F \to F$, \textcolor{purple}{where $L_{0}F$ is the zeroth left derived functor of $F$ (for more details, see the discussion after Corollary~3.15 of~\cite{MR-1})}. If $F$ is a contravariant Hom functor, then each component of its projective stabilization comprises the classes of homomorphisms modulo the ones factoring through \texttt{injectives}. As a consequence, we have 

\begin{proposition}
 $\injc$ is a quotient functor of the identity functor on the category 
 $\Lambda$-$\Mod$ of left $\Lambda$-modules,  
 \textcolor{purple}{and therefore can be viewed as a quotient functor of the forgetful functor on $\Lambda$-$\Mod$ to abelian groups.}
% In particular, $\injc(C)$ is a quotient module of $C$. 
\qed
\end{proposition}

\begin{corollary}\label{C:q-epi}
 $\injc$ preserves epimorphisms. In particular, if $\injc(C) = \{0\}$ and $D$ is a quotient module of $C$, then $\injc(D) = \{0\}$. \qed
\end{corollary}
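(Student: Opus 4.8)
The plan is to deduce this at once from the preceding proposition, which asserts that $\injc$ is a quotient functor of the identity functor on $\Lambda$-$\Mod$. Unwinding that statement: for every left module $C$ there is an epimorphism $p_{C} : C \twoheadrightarrow \injc(C)$; these fit into a commutative square for every homomorphism $g : C \to C'$ (horizontal arrows $g$ and $\injc(g)$, vertical arrows $p_{C}$ and $p_{C'}$); and $p$ is natural in $C$. Concretely, $p_{C}$ is the composite of the canonical isomorphism $C \cong (\Lambda, C)$ with the $\Lambda$-component $(\Lambda, C) \to \oh(\Lambda, C)$ of the natural transformation from $\Hom(\blank, C)$ to its projective stabilization; both are natural in $C$.

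Now let $f : C \to C'$ be an epimorphism. Naturality gives $\injc(f) \circ p_{C} = p_{C'} \circ f$, and the right-hand side is a composite of two epimorphisms, hence an epimorphism. Therefore $\injc(f) \circ p_{C}$ is epic, which forces $\injc(f)$ to be epic; this is the first assertion. For the second, suppose $\injc(D) = \{0\}$ and $C$ is a quotient of $D$, say via an epimorphism $\pi : D \twoheadrightarrow C$. Applying $\injc$ and using what was just shown, $\injc(\pi) : \injc(D) \to \injc(C)$ is an epimorphism with source $\{0\}$, so $\injc(C) = \{0\}$.

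I expect no genuine obstacle; the statement is purely formal and dual to Corollary~\ref{C:s-pres-mono}. The only point meriting a line of care is the passage from ``$\injc$ is a quotient functor of the identity'' to the existence of a componentwise-epic natural transformation $\mathbf{1} \to \injc$, which is exactly dual to the (tacitly used) fact that a subfunctor of the identity, such as $\injt$, carries a componentwise-monic natural transformation into $\mathbf{1}$. If one prefers a fully self-contained argument, one can work directly with the defining four-term exact sequence~\eqref{cotorsion-def}: an epimorphism $f : C \to C'$ induces, by covariant functoriality of $(\Lambda, \blank)$ and $(I, \blank)$, a commutative ladder relating the cokernel presentations $(I, C) \to (\Lambda, C) \to \oh(\Lambda, C) \to 0$ and $(I, C') \to (\Lambda, C') \to \oh(\Lambda, C') \to 0$; since $(\Lambda, f)$ is identified with $f$ under $(\Lambda, \blank) \cong \mathbf{1}$ and is therefore epic, right-exactness of the cokernel (or a one-line diagram chase) yields that $\oh(\Lambda, C) \to \oh(\Lambda, C')$ is epic.
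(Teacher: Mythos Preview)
Your proposal is correct and follows exactly the approach the paper intends: the corollary is stated with a bare \qed, so it is meant as an immediate consequence of the preceding proposition that $\injc$ is a quotient functor of the identity, which is precisely what you unwind. Your optional self-contained argument via the ladder on~\eqref{cotorsion-def} is also fine and amounts to the same observation.
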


\begin{corollary}
 \textcolor{purple}{The projective stabilization of $\injc$ is zero: 
 $\underline{\injc} = 0$.}
\end{corollary}

\begin{proof}
\textcolor{purple}{Similar to that of Corollary~\ref{C:inj-stab-s=0}: the projective stabilization of a functor is zero if and only if it preserves epimorphisms.}
\end{proof}

\begin{definition}
 The quotient module $\injc(C)$ will be called the cotorsion quotient module of $C$.
\end{definition}

\begin{example}
\textcolor{purple}{If $\Lambda := \Z$ and $A$ is an abelian group, then $\injc(A)$ is just the reduced group of $A$.}
\end{example}

\begin{example}
\textcolor{purple}{Trying to come up with a companion result for Corollary~\ref{C:Lambda-power}, we look at $\injc(\Lambda)$. This object varies from the zero module for selfinjective rings to the entire 
 $\Lambda$ for PIDs which are not fields (as they lack divisible elements). A more definitive answer can be given for the character module $\Lambda^{+} :=\, _{\Z}\!(\Lambda, \Q/\Z)$ of $\Lambda$. In fact, as we shall show in Corollary~\ref{C:exchange-formula}, that 
 $\injc((\Lambda^{N})^{+}) \simeq \injt(\Lambda^{N})^{+} \simeq 0$ for any indexing set $N$, where the latter isomorphism is provided by Corollary~\ref{C:Lambda-power}.}
\end{example}

\textcolor{purple}{A general property of $\injc(\Lambda)$ is given by the next observation.}

\begin{lemma}\label{L:q-L-bimodule}
 \textcolor{purple}{For any ring $\Lambda$, $\injc(\prescript{}{\Lambda}{\Lambda})$ is a $\Lambda$-$\Lambda$ bimodule, and the canonical map $\Lambda_{\Lambda} \cong (\prescript{}{\Lambda}{\Lambda}, \prescript{}{\Lambda}{\Lambda}) \lra \injc(\prescript{}{\Lambda}{\Lambda})$ is a bimodule homomorphism.}
\end{lemma}

\begin{proof}
\textcolor{purple}{By the foregoing discussion, $\injc(\prescript{}{\Lambda}{\Lambda}) = (\prescript{}{\Lambda}{\Lambda}, \prescript{}{\Lambda}{\Lambda})/I(\prescript{}{\Lambda}{\Lambda}, \prescript{}{\Lambda}{\Lambda})$, where the denominator denotes the endomorphisms of 
 $\prescript{}{\Lambda}{\Lambda}$ factoring through injectives. The right 
 $\Lambda$-module structures on the source and the target make the numerator a $\Lambda$-$\Lambda$ bimodule. Since, additively, the denominator is a subgroup of the numerator, we only need to check that the denominator withstands scaling on either side. Let 
\[
\xymatrix
	{
	\prescript{}{\Lambda}{\Lambda} \ar[rr]^{f} \ar[rd]_{g}
	&
	& \prescript{}{\Lambda}{\Lambda}
\\
	& J \ar[ru]_{h}
	}
\]
be a factorization with an injective $J$. Switching to the convention ``maps on the opposite side from scalars'', we have $f = g \circ h$. If 
$\alpha \in \Lambda$, then the right $\Lambda$-module structure on the domain of $g$ yields a factorization $\alpha f = \alpha g \circ h$. Similarly, the right $\Lambda$-module structure on the codomain of $h$ yields a factorization  $f \alpha = g \circ h\alpha$. The straightforward verifications are left to the reader.}
\end{proof}

The defining sequence~\eqref{cotorsion-def} gives rise to a short exact sequence of endofunctors on the category of (left) $\Lambda$-modules
\[
0 \lra \injc^{-1} \lra \mathbf{1} \lra \injc \lra 0
\]
which we view as the defining sequence for the endofunctor 
$\injc^{-1}$. 
%Now we take a closer look at the sequence~\eqref{cotorsion-def}. 
We can rewrite it as a short exact sequence 
\begin{equation}\label{Eq:cotorsion-def}
 \xymatrix
	{
	0 \ar[r]
	& I(\Lambda, C) \ar[r]
	& (\Lambda, C) \ar[r]
	& (\overline{\Lambda, C}) \ar[r]
	& 0
	}
\end{equation}
of left $\Lambda$-modules, where $I(\Lambda, C)$ denotes the submodule of $(\Lambda, C)$ consisting of the maps 
$\Lambda \to C$ factoring through injectives.\footnote{\textcolor{purple}{The reader should verify that this is indeed a submodule and not just an abelian subgroup.}} \textcolor{purple}{This observation indicates the relevance of the trace of the class of injective modules. Before proceeding, the reader may benefit from reviewing the general properties of the trace, as expounded in~\cite[pp. 109-112]{AF-92}, and also recall, for a balanced point of view, the relevance of the reject of flats to the notion of torsion, as explained in Proposition~\ref{P:s=colim-ext}.}

\begin{lemma}\label{L:trace} 
Under the canonical isomorphism  $(\Lambda, C) \cong C : f \mapsto f(1)$, we have $I(\Lambda, C) \cong Tr(\mathscr{I}, C)$, i.e.,  
$I(\Lambda, C)$ identifies with  $Tr(\mathscr{I}, C)$, the trace in $C$ of the class~$\mathscr{I}$ of injective  $\Lambda$-modules.
\end{lemma}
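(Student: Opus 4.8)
The plan is to unwind both sides of the claimed isomorphism under the canonical identification $(\Lambda, C) \cong C$ given by $f \mapsto f(1)$, and to check that the submodule $I(\Lambda, C)$ on the left corresponds exactly to the trace submodule $Tr(\mathscr{I}, C)$ on the right. Recall that, by definition, $Tr(\mathscr{I}, C) = \sum_{g} \imr g$, where the sum ranges over all $g \in \Hom(E, C)$ with $E$ an injective $\Lambda$-module; equivalently, it is the smallest submodule of $C$ through which every homomorphism from an injective module into $C$ factors.

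First I would take a homomorphism $f \colon \Lambda \to C$ that factors through an injective, say $f = h \circ k$ with $k \colon \Lambda \to E$, $h \colon E \to C$, and $E$ injective. Then $f(1) = h(k(1)) \in \imr h \subseteq Tr(\mathscr{I}, C)$, so the image of $I(\Lambda, C)$ under $f \mapsto f(1)$ lands in $Tr(\mathscr{I}, C)$. Conversely, given any element of $Tr(\mathscr{I}, C)$, it is a finite sum of elements of the form $h(e)$ with $h \colon E \to C$, $E$ injective, $e \in E$; since $I(\Lambda,C)$ is a submodule it suffices to handle a single such $h(e)$. Here I would use the fact that the element $e \in E$ determines a homomorphism $\Lambda \to E$, $1 \mapsto e$ (namely $\lambda \mapsto \lambda e$), which composed with $h$ gives a map $\Lambda \to C$ sending $1 \mapsto h(e)$ that factors through the injective $E$; hence $h(e)$ lies in the image of $I(\Lambda, C)$. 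This shows the two submodules of $C$ coincide, and naturality/$\Lambda$-linearity of the identification $f \mapsto f(1)$ is standard, so the isomorphism of the statement follows.

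The only point requiring a little care — and the step I expect to be the main (mild) obstacle — is the passage from "factors through \emph{some} injective" to "factors through a \emph{single cyclic} image inside an injective": one must make sure that when we write $f(1) = h(e)$ we can realize this with a genuine map $\Lambda \to E$ of \emph{left} modules, which is exactly the content of $\Lambda$ being a free generator, i.e. $\Hom(\Lambda, E) \cong E$ again via evaluation at $1$. So in fact the whole lemma is really the observation that evaluation at $1$ is a natural isomorphism $\Hom(\Lambda, \blank) \cong \mathrm{Id}$, and that under a natural isomorphism of functors the subfunctor "maps factoring through injectives" of $\Hom(\Lambda, \blank)$ is carried to the subfunctor "trace of injectives" of the identity; one simply has to verify that these two subfunctors of naturally isomorphic functors do correspond, which is the two inclusions above. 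No genuine difficulty arises beyond bookkeeping of the module structure, since pushing an $h \colon E \to C$ through evaluation at $1$ is manifestly $\Lambda$-linear.
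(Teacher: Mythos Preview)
Your proof is correct and follows essentially the same approach as the paper's: both directions are handled by the evaluation-at-$1$ isomorphism, showing first that $f(1)\in\imr h\subseteq Tr(\mathscr{I},C)$ whenever $f=h\circ k$ factors through an injective, and conversely that any generator $h(e)$ of the trace arises as $(h\circ g)(1)$ for the map $g:\Lambda\to E$ with $g(1)=e$, using that $I(\Lambda,C)$ is a submodule to pass to sums. Your additional remarks about naturality and the ``mild obstacle'' are more commentary than the paper provides, but the mathematical content is identical.
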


\begin{proof}
% Any map $\Lambda \to C$ is uniquely determined by the image of the identity. Thus, viewing $I(\Lambda, C)$ as a submodule of $C$, we have that 
Since any $k \in I(\Lambda, C)$ factors through an injective, $k(1)$ belongs to the trace of the injectives. 
%there is a map $f : \Lambda \to C$ which factors through an injective and such that $f(1) = k$. 
Therefore $I(\Lambda, C)$ is contained in the trace. To show the reverse inclusion, suppose  that $l \in C$ can be written as $h(m)$ for some map $h : J \to C$, where $J$ is injective and $m \in J$. Define a map $g : \Lambda \to J$ by setting $g(1):= m$. Then $l = hg(1)$ and therefore $l \in I(\Lambda, C)$. Such elements $l$ generate the trace. Since $I(\Lambda, C)$ is a $\Lambda$-module, the trace is contained in $I(\Lambda, C)$.
\end{proof}

%\textcolor{purple}{Before stating the next result, we recall the notion of finitely presented (a.k.a. coherent) functor (\cite[Section 2]{A66}), say, from a module category to the category of abelian groups. This is just the cokernel of a natural transformation between representable functors. Such functors are necessarily additive. The category of finitely presented functors and natural transformations between them is abelian, with kernels and cokernels defined componentwise. It has enough projectives and the projectives are precisely the representable functors. The global dimension of this category is at most 2 (it could only be 0 or 2).} 

\textcolor{purple}{The next proposition refers to the trace of injectives viewed as an object in the category of finitely presented functors and natural transformations between them; see the beginning of 
Section~\ref{S:duality} and Theorem~\ref{T:afpthm} for the relevant definitions and results.}

\begin{proposition}
\textcolor{purple}{The covariant functor $Tr(\mathscr{I}, \blank)$ is finitely presented of projective dimension at most 1.}
%$\mathrm{proj\, dim}\, Tr(\mathscr{I}, \blank) \leq 1$. 
Moreover, $\mathrm{proj\, dim}\, Tr(\mathscr{I}, \blank) = 0$ (i.e., $Tr(\mathscr{I}, \blank)$ is representable) if and only if $\Lambda$ is left selfinjective.
\end{proposition}

\begin{proof}
 The exact sequences~\eqref{cotorsion-def} yields a length-one projective resolution 
 \[
 0 \lra (\Sigma\Lambda, \blank) \lra (I, \blank) \lra Tr(\mathscr{I}, \blank) \lra 0
 \]
 of the trace, proving the first assertion. This sequence and Yoneda's lemma, show that the trace is projective if and only if \textcolor{purple}{$I \to \Sigma \Lambda$ is a split epimorphism if and only if $\Lambda \to I$ is a split monomorphism, which proves the second claim.}
\end{proof}

\begin{corollary}
 $Tr(\mathscr{I}, \blank)$, viewed as an endofunctor on the category of all left $\Lambda$-modules, preserves limits if and only if 
 $\Lambda$ is selfinjective. \textcolor{purple}{In that case, $Tr(\mathscr{I}, \blank)$ is the identity functor.}
\end{corollary}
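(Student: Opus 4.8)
The plan is to prove the two implications separately. The direction ``$\Lambda$ left selfinjective $\Rightarrow$ $Tr(\mathscr{I},\blank)$ preserves limits'' is immediate from the preceding Proposition: if $\Lambda$ is left selfinjective then $Tr(\mathscr{I},\blank)$ is representable, say $Tr(\mathscr{I},\blank)\cong(X,\blank)$, and a covariant Hom functor, being a right adjoint, preserves all limits. (Equivalently, $\Lambda$ left selfinjective means $\Lambda\in\mathscr{I}$, so $Tr(\mathscr{I},C)\supseteq Tr(\{\Lambda\},C)=C$ for every $C$, i.e. $Tr(\mathscr{I},\blank)$ is the identity functor, which trivially preserves limits.) So the content is the converse.

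For the converse I would first record that $Tr(\mathscr{I},\blank)$, being the trace of a class of modules, is a subfunctor of the identity functor on $\Lambda$-$\Mod$: for each $C$ we have $Tr(\mathscr{I},C)\subseteq C$, and for each homomorphism $f$ the map $Tr(\mathscr{I},f)$ is the restriction of $f$. Now assume $Tr(\mathscr{I},\blank)$ preserves all limits; in particular it preserves kernels (a kernel being the equalizer with the zero map). Applying this to a quotient map $p\colon C\to C/N$, whose kernel is the submodule $N$, gives $Tr(\mathscr{I},N)=\Ker\bigl(Tr(\mathscr{I},p)\bigr)$; and since $Tr(\mathscr{I},p)$ is simply $p$ restricted to $Tr(\mathscr{I},C)$, its kernel is $Tr(\mathscr{I},C)\cap N$. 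Hence $Tr(\mathscr{I},\blank)$ is a \emph{left exact} preradical:
\[
Tr(\mathscr{I},N)=N\cap Tr(\mathscr{I},C)\qquad\text{for every submodule }N\subseteq C.
\]

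Finally I would specialize this identity to $C=I$, the injective envelope of ${}_{\Lambda}\Lambda$, and $N=\iota(\Lambda)\cong{}_{\Lambda}\Lambda$. Since $I$ is injective, $Tr(\mathscr{I},I)=I$, so the displayed equality yields $Tr(\mathscr{I},{}_{\Lambda}\Lambda)={}_{\Lambda}\Lambda$. In particular $1\in\Lambda$ lies in the trace, so $1=\sum_{k=1}^{n}g_k(j_k)$ for finitely many homomorphisms $g_k\colon J_k\to\Lambda$ with each $J_k$ injective. Assembling these into one homomorphism $g\colon J:=\bigoplus_{k=1}^{n}J_k\to\Lambda$ with $J$ injective, we get $1\in\imr g$, so $g$ is an epimorphism; it splits because $\Lambda$ is projective (explicitly, $\lambda\mapsto\lambda j$ with $j:=(j_1,\dots,j_n)$ is a section). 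Thus ${}_{\Lambda}\Lambda$ is a direct summand of the injective module $J$, hence is itself injective, i.e. $\Lambda$ is left selfinjective.

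The only mildly delicate step is the passage from ``preserves limits'' to the left-exact-preradical identity; the rest is bookkeeping, and the key specialization ($N=\Lambda\subseteq I$) is the natural one suggested by the length-one presentation $0\to(\Sigma\Lambda,\blank)\to(I,\blank)\to Tr(\mathscr{I},\blank)\to 0$. One may also observe that $Tr(\mathscr{I},\blank)$ \emph{always} preserves arbitrary products (products are exact in $\ab$, and both $(\Sigma\Lambda,\blank)$ and $(I,\blank)$ carry products to products in that presentation), so ``preserves all limits'' is in fact equivalent here to ``left exact''; this refinement is not needed for the proof but clarifies what is really being asked.
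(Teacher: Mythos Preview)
Your proof is correct, and it takes a genuinely different route from the paper's. The paper dispatches the corollary in two lines: it invokes a theorem of Eilenberg--Watts to the effect that a covariant additive functor on a module category preserves limits if and only if it is representable, and then cites the preceding Proposition, which says $Tr(\mathscr{I},\blank)$ is representable if and only if $\Lambda$ is left selfinjective (via the length-one resolution and Yoneda). Your argument is more elementary and self-contained: you use only that preservation of limits implies preservation of kernels, deduce the left-exact preradical identity $Tr(\mathscr{I},N)=N\cap Tr(\mathscr{I},C)$, specialize to $\Lambda\subseteq I$ to get $Tr(\mathscr{I},{}_\Lambda\Lambda)={}_\Lambda\Lambda$, and then split $\Lambda$ off an injective. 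This avoids the (nontrivial) representability theorem entirely, and in fact proves a sharper statement: your converse only needs $Tr(\mathscr{I},\blank)$ to preserve \emph{kernels}, not all limits. Coupled with your closing observation that $Tr(\mathscr{I},\blank)$ always preserves products (from the resolution $0\to(\Sigma\Lambda,\blank)\to(I,\blank)\to Tr(\mathscr{I},\blank)\to 0$ and AB4* in module categories), you have effectively shown that ``preserves limits'', ``left exact'', and ``$\Lambda$ left selfinjective'' are all equivalent here. The paper's approach is quicker if one is willing to quote Eilenberg--Watts; yours is more transparent and extracts slightly more.
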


\begin{proof}
 By a theorem of Eilenberg and Watts, a covariant functor preserves limits if and only if it is representable. By the above proposition, this is equivalent to 
 $\Lambda$ being selfinjective.
\end{proof}

\begin{remark}\label{R:pdq}
\textcolor{purple}{Since the projective resolution 
\[
0 \lra (\Sigma\Lambda, \blank) \lra (I,\blank) \lra 
(\prescript{}{\Lambda}{\Lambda}, \blank) \lra \injc \lra 0
\] 
of $\injc$ comes from the short exact sequence 
$0 \lra \Sigma\Lambda \lra I \lra \prescript{}{\Lambda}{\Lambda} \lra 0$ the defect
(see~\cite[Proposition 6.1]{MR-1}) of $\injc$ is zero.}
\end{remark}

The next result shows that over noetherian hereditary rings, the cotorsion module of a module is a direct factor of the module. More precisely, we have

\begin{proposition}\label{P:cotorsion-direct}
If $\Lambda$ is left noetherian and left hereditary and $C$ is a left $\Lambda$-module, then the epimorphism $C \lra \injc(C)$ is split. \textcolor{purple}{For any indecomposable $C$ over a left hereditary $\Lambda$, noetherian or not, the homomorphism $C \lra \injc(C)$ is either the zero map or an isomorphism.}
\end{proposition}

\begin{proof}
 $Tr(\mathscr{I}, C)$, being the sum of the images of injectives, is the image of a direct sum of such images. 
%Since $\Lambda$ is hereditary, each such image is injective. 
 Since $\Lambda$ is noetherian, the direct sum is injective. Using the hereditary property, we have that the sum of the images, i.e., the trace of injectives in $C$, is injective. In view of Lemma~\ref{L:trace}, the short exact sequence~\eqref{Eq:cotorsion-def} is split-exact. \textcolor{purple}{To prove the second assertion, use again the fact that the image of an injective module is injective and hence a direct summand of $C$.}
\end{proof}

\textcolor{purple}{Having described the functor $\injc^{-1}$ as the trace of injectives, we want to describe an important property of the cotorsion functor $\injc$. To this end, we introduce} 

\begin{definition}
\textcolor{purple}{A quotient functor $\mathfrak{r}$ of the identity functor is a \texttt{coradical} if it is a radical on the opposite category, i.e., if $\mathfrak{r}\mathfrak{r}^{-1} = 0$.}
\end{definition}

In line with the formal duality between torsion and cotorsion, one may ask if there is an analog of Proposition~\ref{T:residual-torsion}. The next result answers this question in the positive.

\begin{proposition}\label{P:cotorsion-free-submodule}
 $\injc$ is a coradical, i.e., $\injc(\injc^{-1}(C)) = \{0\}$ for any $C$.   \end{proposition}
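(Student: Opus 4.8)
The plan is to dualize the proof of Theorem~\ref{T:residual-torsion}, exploiting the formal duality between the injective stabilization of the tensor product and the contravariant Hom modulo injectives. Since $\injc(C) = \oh(\Lambda, C)$ is, by Lemma~\ref{L:trace}, the cokernel of the inclusion $Tr(\mathscr{I}, C) \hookrightarrow C$ (under the canonical identification $(\Lambda, C)\cong C$), the statement $\injc(\injc^{-1}(C)) = \{0\}$ is equivalent to the assertion that the trace of the injectives in $\injc^{-1}(C) = C/Tr(\mathscr{I}, C)$ is all of $\injc^{-1}(C)$ -- that is, $Tr(\mathscr{I}, C/Tr(\mathscr{I},C)) = C/Tr(\mathscr{I},C)$. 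Equivalently, writing $T := Tr(\mathscr{I}, C)$, one must show that every element of $C/T$ lies in the image of some map from an injective module.

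First I would set up the diagram: let $\pi : C \to C/T$ be the canonical projection, let $\bar c \in C/T$ be arbitrary, and choose a preimage $c \in C$. The key observation is that $C$ itself embeds in its injective envelope $E(C)$, which is injective; thus the composite $C \hookrightarrow E(C)$ does not help directly, but instead one uses that $C/T$ is a \emph{quotient} of $C$. The cleanest route: note that $Tr(\mathscr{I}, \blank)$ is a subfunctor of the identity, hence for the projection $\pi : C \to C/T$ there is a commutative square with $Tr(\mathscr{I}, C) \to Tr(\mathscr{I}, C/T)$ on the left and $\pi$ on the right. Since the left vertical map's source $T = Tr(\mathscr{I},C)$ maps onto (a submodule of) $Tr(\mathscr{I}, C/T)$, and $\pi(T) = 0$, this alone is not enough; the actual content is that \emph{any} lift of $\bar c$ into $C$, when pushed into $E(C)$ and then... no. The correct dual move is the following: apply $Tr(\mathscr{I}, \blank)$-exactness. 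Observe that the contravariant functor $\Hom(\blank, C)$ modulo injectives has the property, dual to ``$\injt$ is a radical'', that $\oh(\Lambda, \oh(\Lambda, \blank))$... Actually the most economical proof mirrors the finitely-presented-then-colimit structure only if needed; but here, unlike torsion, no colimit passage is required, because $\injc$ is already defined via a projective resolution of $\Lambda$-type data that behaves well. So I would argue: the defining short exact sequence $0 \to T \to C \to \injc(C) \to 0$ is the $\Lambda$-component of a short exact sequence of functors $0 \to Tr(\mathscr{I}, \blank) \to \mathbf{1} \to \injc \to 0$; apply $Tr(\mathscr{I},\blank)$ and use that a trace submodule is the image of a sum of injectives, hence the square

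\begin{equation*}
\xymatrix{
Tr(\mathscr{I}, C) \ar@{->>}[r] \ar@{^{(}->}[d] & Tr(\mathscr{I}, C/T) \ar@{^{(}->}[d] \\
C \ar@{->>}[r]^{\pi} & C/T
}
\end{equation*}

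commutes, and the top arrow is epic \emph{because} $T$ is already a sum of images of injectives and $\pi$ restricted to those images still hits all of $Tr(\mathscr{I}, C/T)$ -- wait, that is false, as $\pi|_T = 0$.

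Let me reconsider and state the approach I actually expect to work. The honest dual of the torsion argument is: (1) recall that the $1$-cotorsion analogue -- here the cokernel of $C \to C^{**}$-type map, or rather the dual of $\mathfrak{t}$ -- is a coradical by a classical argument; (2) show $\injc$ sits above it (dual to $\injt \subseteq \mathfrak{t}$); (3) conclude. But the text signals instead a \emph{duality argument} (cf. Remark~\ref{R:q-not-idem}, and Section~\ref{S:duality}), so the intended proof is almost certainly: apply the Auslander-Gruson-Jensen functor (or its adjoint) to transport the already-proven statement $\injt(A/\injt(A)) = 0$ (Theorem~\ref{T:residual-torsion}) across the duality $\injc \leftrightarrow \injt$. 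Concretely, I would invoke the duality established later that sends $\injc$ to $\injt$ and $\injc^{-1}$ to $\injt^{-1}$, so that $\injc(\injc^{-1}(C))$ corresponds to $\injt(\injt^{-1}(A))$ for the appropriate $A$, which vanishes by Theorem~\ref{T:residual-torsion}; the vanishing of a module is preserved and reflected by the (faithful) functors of the duality, giving $\injc(\injc^{-1}(C)) = 0$. The main obstacle -- and the reason I would flag caution -- is circularity and generality: the full duality is only available when the injective envelope of $\Lambda$ is finitely presented, yet this Proposition is stated for arbitrary $\Lambda$. So the genuinely general proof must instead be the self-contained dual of Theorem~\ref{T:residual-torsion}: run the exact-sequence chase $0 \to \injc^{-1}(C) \to C \to \injc(C) \to 0$, apply the left-exact contravariant $\Hom(\blank, \blank)$-modulo-injectives machinery, and use that $\oh(\Lambda, \blank)$ kills modules which are traces of injectives together with the fact (from Lemma~\ref{L:trace}) that $\injc^{-1}(C) \cong Tr(\mathscr{I}, C)$ is itself such a trace -- so it suffices to prove $\injc(Tr(\mathscr{I}, C)) = 0$, i.e., that every map $\Lambda \to Tr(\mathscr{I}, C)$ factors through an injective, which is immediate since $Tr(\mathscr{I}, C)$ is generated by images of injectives and any single element lands in a finite such sum, itself a quotient of an injective... and here is the real subtlety: a quotient of an injective need not be injective, so ``factors through an injective'' requires that the generating element of $Tr(\mathscr{I}, C)$ be literally in the image of one map from one injective -- which is exactly the content of the reverse inclusion in the proof of Lemma~\ref{L:trace}. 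Thus I expect the proof to read: ``By Lemma~\ref{L:trace}, $\injc^{-1}(C) = Tr(\mathscr{I}, C)$. Every $c \in Tr(\mathscr{I}, C)$ equals $h(m)$ for some injective $J$, map $h : J \to C$ with image in $Tr(\mathscr{I},C)$, and $m \in J$; hence the map $\Lambda \to Tr(\mathscr{I}, C)$ sending $1 \mapsto c$ factors through $J$, so it is zero in $\oh(\Lambda, Tr(\mathscr{I},C)) = \injc(\injc^{-1}(C))$. As such maps generate, $\injc(\injc^{-1}(C)) = 0$.'' The hard part is making the ``as such maps generate'' step rigorous -- one must check that $\oh(\Lambda, -)$ applied to $Tr(\mathscr{I},C)$ vanishes on all of it, not merely on generators, which follows because $I(\Lambda, -)$ is a submodule and the quotient is therefore determined by whether the submodule is everything; and it is, since the generators of $Tr(\mathscr{I},C)$ all lie in $I(\Lambda, Tr(\mathscr{I},C))$.
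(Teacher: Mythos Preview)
Your final proposed proof is essentially the paper's, but you reach it only after a detour that opens with a genuine error: in your first paragraph you identify $\injc^{-1}(C)$ with $C/Tr(\mathscr{I},C)$, which is $\injc(C)$, not $\injc^{-1}(C)$. The short exact sequence $0 \to \injc^{-1} \to \mathbf{1} \to \injc \to 0$ together with Lemma~\ref{L:trace} gives $\injc^{-1}(C) = Tr(\mathscr{I}, C)$, as you eventually write correctly. Once that is fixed, the task is $\injc(Tr(\mathscr{I},C)) = 0$, and your last paragraph is exactly the paper's argument: for $g : \Lambda \to Tr(\mathscr{I},C)$ one composes with the inclusion $i : Tr(\mathscr{I},C) \hookrightarrow C$, observes $ig(1) \in Tr(\mathscr{I},C) = I(\Lambda,C)$ so $ig$ factors through some injective $J$, notes the map $J \to C$ lands in $Tr(\mathscr{I},C)$ and hence factors through $i$, and uses that $i$ is monic to conclude $g$ factors through $J$. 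You are also right that the duality-based route would be both circular and insufficiently general here.

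Your closing worry about ``generators'' is unnecessary. Under the canonical identification $(\Lambda, M) \cong M$, a map $\Lambda \to Tr(\mathscr{I},C)$ \emph{is} an element of $Tr(\mathscr{I},C)$; and Lemma~\ref{L:trace} asserts the \emph{equality} $Tr(\mathscr{I},C) = I(\Lambda,C)$, not merely that elements of the form $h(m)$ generate. Hence every element of the trace is already of the form $h(m)$ for a single injective $J$ (the finite-direct-sum trick you mention is implicit in that equality), so every map $\Lambda \to Tr(\mathscr{I},C)$ factors through an injective outright---no submodule or generator argument is needed.
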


\begin{proof}
\textcolor{purple}{By the definition of cotorsion,} $\injc(Tr(\mathscr{I}, C)) = \{0\}$ if and only if any module homomorphism  $g : \Lambda \to Tr(\mathscr{I}, C)$ factors through an injective. Choose one such $g$ and let $i : Tr(\mathscr{I}, C) \to C$ be the inclusion map. 
The proof of Lemma~\ref{L:trace} shows that, since $ig(1) = g(1) \in Tr(\mathscr{I}, C)$, $ig$ factors through some injective $J$
%
%, where we identify $I(\Lambda, C)$ with its image in $C$. Therefore, there is a map $f : \Lambda \to C$ which factors through an injective and such that $f(1) = ig (1)$. Then $ig = f$ factors through some injective $J$ 
and we have a commutative diagram 
 \[
\xymatrix
	{
	\Lambda \ar[r]^{g} \ar[rd]
	& Tr(\mathscr{I}, C) \ar@{>->}[r]^{i}
	& C 
\\
	& J \ar[ru] \ar@{-->}[u]
	}
\] 
of solid arrows. By Lemma~\ref{L:trace}, the northeastern map factors through $i$, giving rise to a dashed arrow and a commutative triangle on the right-hand side. Since $i$ is monic, the triangle on the left commutes, too, showing that $g$ factors through an injective. 
\end{proof} 

\begin{remark}\label{R:q-not-id}
 The reader should not think that $\injc^{2} = \injc$. In plain terms, there may be nonzero maps $ \Lambda \to \injc(C)$ factoring through injectives. An example, based on a duality argument, will be provided by Remark~\ref{R:q-not-idem}. 
\end{remark}

Having a definition of the cotorsion module of a module, we can now define a notion of cotorsion module.

\begin{definition}\label{D:cotorsion-mod}
 A module $C$ is said to be a \texttt{cotorsion module} if the canonical surjection $(\Lambda, C) \lra (\overline{\Lambda, C})$ is an isomorphism. Equivalently, no nonzero map $\Lambda \to C$ factors through an injective, or, equivalently, the map $(\iota, C)$ in \eqref{cotorsion-def} is the zero map.
\end{definition}

\begin{remark}\label{R:cot-Matlis}
 As we mentioned before, while we could not find any prior definition of the cotorsion module of a module, there have been several attempts to define a more narrow notion of a cotorsion module. This has been done in various degrees of generality. In the case 
 $\Lambda$ is a commutative domain, Matlis~\cite{Mat} calls $C$ a cotorsion module if
$\Hom(Q,C) = \{0\} = \Ext^{1}(Q,C)$, where~$Q$ is the quotient field of 
$\Lambda$. In this case, $Q = I$ is the injective envelope of~$\Lambda$ and therefore the map $(\iota, C)$ from~\eqref{cotorsion-def} is zero. Thus a cotorsion module in the sense of Matlis is a cotorsion module in the sense of 
Definition~\ref{D:cotorsion-mod}. In fact, in this case, $C \simeq \injc(C) \simeq \Ext^{1}(\Sigma \Lambda, C)$, as can be seen by from the sequence~\eqref{cotorsion-def}. 
\end{remark}

\begin{proposition}
 \textcolor{purple}{The class of cotorsion modules is closed under submodules and products.}
\end{proposition}

\begin{proof}
 \textcolor{purple}{The class of cotorsion modules coincides with the torsion-free class of the preradical $\injc^{-1}$. The result now follows from  Proposition~\ref{P:tf-sub-prod}.}
\end{proof}

Having defined cotorsion modules, we proceed to define cotorsion-free modules.

\begin{definition}\label{D:cotorsion-free}
 The module $C$ is said to be \texttt{cotorsion-free} if the cotorsion module of $C$ is zero:
\[
\injc{(C)} = (\overline{\Lambda, C}) = \{0\}
\]
Equivalently, any map $\Lambda \to C$ factors through an injective, or, equivalently, the map $(\iota, C)$ in \eqref{cotorsion-def} is epic.
\end{definition}

Clearly, any injective module is cotorsion-free. It is also clear that if a module is both cotorsion and cotorsion-free, then it is the zero module.

\begin{proposition}
\textcolor{purple}{The class of cotorsion-free modules is closed under quotient modules and coproducts.}
\end{proposition}

\begin{proof}
 \textcolor{purple}{The class of cotorsion-free modules coincides with the torsion class of the preradical $\injc^{-1}$. The result now follows from  Proposition~\ref{P:t-quot-coprod}.}
\end{proof}

\begin{remark}
\textcolor{purple}{This is a companion to Remark~\ref{R:t-not-tt}. In general, the class of cotorsion-free modules is not closed under extensions. Otherwise, this class would be the torsion class of a torsion theory in the sense of Dickson. As we just remarked, this class coincides with the torsion class of $\injc^{-1}$. In particular, that would make $\injc^{-1}$ a radical, which is equivalent to saying that $\injc$ is idempotent, contradicting Remark~\ref{R:q-not-id}.} 
\end{remark}

It is a general property of coradical functors that the corresponding cotorsion-free class\footnote{Strictly speaking, this should be called the \texttt{precotorsion-free} class.} is a coreflective subcategory. In our case, this means that 
$\injc^{-1}$, called a coreflector and viewed as a functor from the category of all modules to the category of cotorsion-free modules (but not all modules!) is right adjoint to the inclusion functor. Thus, we have 

\begin{proposition}\label{P:coreflector-limits}
The functor $\injc^{-1}$ from the category of all modules to the category of cotorsion-free modules preserves all limits. In particular, the class of cotorsion-free modules has all limits.  \qed
\end{proposition}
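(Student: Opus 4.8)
The plan is to mirror, on the opposite side, the argument behind Proposition~\ref{P:reflector-colim}: realize $\injc^{-1}$ as the coreflector onto the cotorsion-free subcategory and then invoke the fact that right adjoints preserve limits. Write $\mathcal{F}$ for the full subcategory of $\Lambda$-$\Mod$ whose objects are the cotorsion-free modules and $\iota\colon\mathcal{F}\hookrightarrow\Lambda$-$\Mod$ for the inclusion. By Proposition~\ref{P:cotorsion-free-submodule} the module $\injc^{-1}(C)$ is cotorsion-free for every $C$, so $\injc^{-1}$ corestricts to a functor $\Lambda$-$\Mod\to\mathcal{F}$, which I again denote $\injc^{-1}$.

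The key point is the adjunction $\iota\dashv\injc^{-1}$. By Lemma~\ref{L:trace}, under $(\Lambda,C)\cong C$ the submodule $\injc^{-1}(C)=I(\Lambda,C)$ is identified with $Tr(\mathscr{I},C)$, the trace of the injective modules in $C$; and by the same lemma together with Definition~\ref{D:cotorsion-free}, a module $F$ is cotorsion-free precisely when $F=Tr(\mathscr{I},F)$, i.e. $F$ is the sum of the images of the injective modules mapping into it. Hence, given a cotorsion-free $F$ and an arbitrary homomorphism $\phi\colon \iota F\to C$, writing $F=\sum_j\imr(J_j\to F)$ with each $J_j$ injective, we get $\phi(F)=\sum_j\phi(\imr(J_j\to F))\subseteq Tr(\mathscr{I},C)=\injc^{-1}(C)$, so $\phi$ factors, uniquely since the inclusion is monic, through $\injc^{-1}(C)\hookrightarrow C$. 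This produces a bijection $\mathrm{Hom}_{\Lambda}(\iota F,C)\cong\mathrm{Hom}_{\Lambda}(F,\injc^{-1}(C))$, natural in both variables, exhibiting $\injc^{-1}$ as right adjoint to $\iota$; this is the ``general property of coradicals'' alluded to just before the statement.

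Everything else is then formal. As a right adjoint, $\injc^{-1}\colon\Lambda$-$\Mod\to\mathcal{F}$ preserves all limits, and $\Lambda$-$\Mod$ has all of them; this is the first assertion. For the second, given any diagram $D\colon\mathcal{J}\to\mathcal{F}$ set $L:=\varprojlim_{\Lambda\text{-}\Mod}(\iota D)$, which exists by completeness of $\Lambda$-$\Mod$; since $\injc^{-1}$ preserves this limit and $\injc^{-1}\iota\cong\mathrm{id}_{\mathcal{F}}$ (the inclusion of a full subcategory being fully faithful), we obtain $\injc^{-1}(L)=\varprojlim_{\mathcal{J}}(\injc^{-1}\iota D)=\varprojlim_{\mathcal{J}}D$ in $\mathcal{F}$. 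Thus $\mathcal{F}$ has all limits, computed by coreflecting the ambient limit.

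The argument carries no genuine obstacle; the only place where the concrete situation enters is the verification of the adjunction, and there the needed computation is exactly the one already performed in the proof of Lemma~\ref{L:trace}. So the single point requiring care is to read off cotorsion-freeness as the trace identity $F=Tr(\mathscr{I},F)$ and to check that a homomorphism out of a cotorsion-free module necessarily lands in the trace of the injectives in the target.
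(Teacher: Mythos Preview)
Your argument is correct and follows the same approach as the paper, which simply invokes the general fact (stated immediately before the proposition) that for a coradical the cotorsion-free class is coreflective with $\injc^{-1}$ as right adjoint to the inclusion, and ends with \qed. You have supplied an explicit verification of the adjunction $\iota\dashv\injc^{-1}$ via the trace description from Lemma~\ref{L:trace}, which is exactly the content behind the paper's appeal to ``general properties of coradicals''.
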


Lemma~\ref{L:trace} and Proposition~\ref{P:cotorsion-free-submodule} justify the following 

\begin{definition}
 The trace in $C$ of the class of injective $\Lambda$-modules will be called the
 cotorsion-free submodule of $C$. 
\end{definition}

Thus a module is cotorsion-free if and only if it coincides with its cotorsion-free submodule.

Summarizing the foregoing discussion, we rewrite the defining short exact sequence for the cotorsion quotient module of $C$ using the trace of the class~$\mathscr{I}$ of injectives in~$C$:
\begin{equation}\label{Eq:q-via-injective-trace}
\xymatrix
	{
	0 \ar[r]
	& Tr(\mathscr{I}, C) \ar[r]
	& C \ar[r]
	& \injc(C) \ar[r]
	& 0
	}
\end{equation}
Recalling the definition of the left cosatellite of a contravariant 
functor~\cite[Section~5]{MR-1} and the fact that the Hom functor is left-exact (in fact, we only need the half-exactness), we observe that $Tr(\mathscr{I}, C)$ is nothing but the left cosatellite of the functor $\Hom(\blank, C)$ evaluated on 
$\Lambda$, i.e., 
\[
Tr(\mathscr{I}, C) \simeq C_{1}(\blank, C)(\Lambda)
\]
The same is expressed by the short exact sequence 
\begin{equation}
 0 \lra C_{1}(\blank, C)(\Lambda) \lra C \lra C/Tr(\mathscr{I}, C) \lra 0
 \end{equation}
which is similar to the short exact sequence~\eqref{Eq:Rej}. In multiplicative notation, the similarity with~\eqref{Eq:Rej-mult} becomes even more apparent:
\begin{equation}\label{Eq:q=Tr-inverse}
 \injc(C) = (Tr(\mathscr{I}, C))^{-1} \quad \text{and} \quad 
\injc^{-1}(C) = C_{1}(\blank, C)(\Lambda).
\end{equation}
\smallskip

\begin{proposition}
 The functor $Tr(\mathscr{I}, C) \simeq C_{1}(\blank, C)(\Lambda)$, viewed as a functor of~$C$ from all modules to cotorsion-free modules, preserves all limits.  
\end{proposition}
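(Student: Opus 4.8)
The plan is to recognize this statement as the formal dual of the corollary following Proposition~\ref{P:s=colim-ext} and to reduce it, essentially in one step, to Proposition~\ref{P:coreflector-limits}. The left cosatellite description in~\eqref{Eq:q=Tr-inverse} already records a natural isomorphism $Tr(\mathscr{I}, C) \simeq C_{1}(\blank, C)(\Lambda) \simeq \injc^{-1}(C)$, the last term being precisely the coreflector into the subcategory of cotorsion-free modules. So all that remains is to check that this chain of isomorphisms is compatible with the chosen codomain and that the functor in question really is the right adjoint of the inclusion.

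I would carry this out in three steps. First, invoke~\eqref{Eq:q=Tr-inverse} to identify the functor under consideration with $\injc^{-1}$, naturally in $C$. Second, verify that $\injc^{-1}(C)$ is genuinely cotorsion-free: by Proposition~\ref{P:cotorsion-free-submodule}, $\injc$ is a coradical, so $\injc(\injc^{-1}(C)) = \{0\}$, which is exactly the condition of Definition~\ref{D:cotorsion-free}; equivalently, via Lemma~\ref{L:trace}, $Tr(\mathscr{I}, \injc^{-1}(C))$ coincides with $\injc^{-1}(C)$, so the natural isomorphism above does land in the subcategory of cotorsion-free modules. Third, recall from the discussion preceding Proposition~\ref{P:coreflector-limits} that $\injc^{-1}$, with this restricted codomain, is the coreflector, i.e.\ the right adjoint of the inclusion of cotorsion-free modules into $\Lambda$-$\Mod$; since right adjoints preserve limits, the claim follows. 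In particular the class of cotorsion-free modules is complete, and a limit of cotorsion-free modules is computed by applying $\injc^{-1} \simeq Tr(\mathscr{I}, \blank)$ to the limit formed in the ambient category.

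The main obstacle — really the only point requiring care — is the codomain bookkeeping, exactly the issue flagged in the remark after Proposition~\ref{P:reflector-colim} on the torsion side: the short exact sequence of endofunctors $0 \to \injc^{-1} \to \mathbf{1} \to \injc \to 0$ lives in $\Lambda$-$\Mod$ and so cannot by itself transport the limit-preservation property; one must pass through the coreflector interpretation, which is why the second step above is needed. A self-contained alternative, should one wish to avoid invoking the adjunction, is to argue directly: half-exactness of $\Hom(\blank, C)$ makes $C \mapsto C_{1}(\blank, C)(\Lambda)$ additive, and a hands-on computation with a product or an equalizer, using that $Tr(\mathscr{I}, \blank)$ is the trace of injectives, shows $Tr(\mathscr{I}, \varprojlim C_{j}) \simeq \varprojlim Tr(\mathscr{I}, C_{j})$ once the limit is taken inside the cotorsion-free subcategory. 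This route, however, essentially re-derives the adjunction and is longer than the one-line argument above, so I would present the short proof and merely allude to the direct verification.
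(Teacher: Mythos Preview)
Your proposal is correct and follows essentially the same approach as the paper: the paper's proof is the single line ``Follows from Proposition~\ref{P:coreflector-limits},'' which is exactly the reduction you carry out after identifying $Tr(\mathscr{I},\blank)$ with $\injc^{-1}$ via~\eqref{Eq:q=Tr-inverse}. Your additional care about the codomain and the coradical property is implicit in the paper's setup and makes the argument more self-contained, but there is no genuine difference in strategy.
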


\begin{proof}
Follows from Proposition~\ref{P:coreflector-limits}. 
\end{proof}

\begin{remark}\label{R:EJ}
 Enochs and Jenda~\cite[Definition 5.3.22]{EJ2000} say that a module $M$ over an arbitrary ring is cotorsion if $\Ext^{1}(F,M) = \{0\}$ for any flat module $F$. They remark that that definition\footnote{\textcolor{purple}{That definition is already present in~\cite[p. 46]{J-77}.}} generalizes the definitions of Harrison~\cite{Har} and Warfield~\cite{W} and agrees with that of Fuchs~\cite{Fuchs} but differs from the definition of Matlis mentioned above. The last observation is easy to explain: injective modules are cotorsion in the sense of Enochs and Jenda, but in general such modules are not  cotorsion in the sense of Matlis. In fact, as our definition shows, a better term for the cotorsion modules in the sense of Enochs-Jenda might have been ``cotorsion-free modules'' but, even under this moniker, among such modules there would be modules which are not cotorsion-free in the sense of Definition~\ref{D:cotorsion-free}. For example, Enochs and 
 Jenda~\cite[Lemma 5.3.23]{EJ2000} remark that every pure injective is cotorsion in their sense.\footnote{\textcolor{purple}{That comment is also present in~\cite[p. 46]{J-77}.}} We shall now show that there are pure injectives which are not cotorsion-free in the sense of Definition~\ref{D:cotorsion-free}.\footnote{The authors are grateful to Gena Puninski for a helpful comment leading to this example.} Let $\Bbbk$ be a field, $\Lambda : = \Bbbk[[X]]$, and $C := \Lambda$. Then~\cite[Proposition~1]{ZHZ} $\Bbbk[[X]]$ is pure injective as a module over 
itself. As $\Bbbk[[X]]$ is a PID, any injective is divisible and therefore the  trace of the injectives in $\Bbbk[[X]]$ consists of divisible elements. Since  $\Bbbk[[X]]$ is obviously not divisible, the injective trace is properly contained in $\Bbbk[[X]]$, and therefore $C$ is not cotorsion-free. In fact, since 0 is the only divisible element in $\Bbbk[[X]]$, the latter is cotorsion. Thus the class of cotorsion 
 $\Bbbk[[X]]$-modules in the sense of Enochs-Jenda contains both cotorsion-free modules (e.g., injectives) and cotorsion modules (e.g., $\Bbbk[[X]]$).
\end{remark}

The next two results deal with the vanishing and the exactness properties of the cotorsion functor. Since injectives are cotorsion-free, Corollary~\ref{C:q-epi} yields 

\begin{proposition}
\textcolor{purple}{If $C$ is injective, then $\injc(D) = 0$ for any quotient module $D$ of~$C$. In particular, all cosyzygy modules in an injective resolution of a module are cotorsion-free. \qed}
\end{proposition}

\begin{proposition}\label{P:q=0}
 The following conditions are equivalent:
 
\begin{itemize}
 \item[a)] $\injc$ preserves monomorphisms,
\smallskip 
 \item[b)] $\injc$ is the zero functor,
 \smallskip 
 \textcolor{purple}{\item[c)] $\injc(\Lambda) \simeq 0$},
 \smallskip 
 \item[d)] $\Lambda$ is left selfinjective.
 \smallskip 
\end{itemize}
\end{proposition}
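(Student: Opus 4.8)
The plan is to prove the chain of equivalences $a) \Leftrightarrow b) \Leftrightarrow c)$ for the cotorsion functor $\injc$, dualizing the argument already given for the torsion functor $\injt$ in Proposition~\ref{P:s-epi}. The implication $b) \Rightarrow a)$ is trivial, so the work is in $a) \Rightarrow b)$ and the two-sided link between $b)$ and $c)$.

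\textbf{Proof plan.} For $a) \Rightarrow b)$, I would dualize the $a) \Rightarrow b)$ step from Proposition~\ref{P:s-epi}. Given any left module $C$, choose a monomorphism $0 \to C \to J$ with $J$ injective. Applying $\injc$ and using that $\injc$ preserves monomorphisms (hypothesis a), we get a monomorphism $\injc(C) \to \injc(J)$. But any injective $J$ is cotorsion-free, i.e., $\injc(J) = \{0\}$ (this was observed right after Definition~\ref{D:cotorsion-free}: every map $\Lambda \to J$ factors through $J$ itself, hence through an injective). Therefore $\injc(C) = \{0\}$, establishing $b)$. For $b) \Leftrightarrow c)$, I would unwind the definition: $\injc$ is the zero functor if and only if, for every $C$, the canonical surjection $(\Lambda, C) \to (\overline{\Lambda, C})$ is an isomorphism, which by Definition~\ref{D:cotorsion-mod} says every map $\Lambda \to C$ factors through an injective. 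Equivalently, by Lemma~\ref{L:trace}, $Tr(\mathscr{I}, C) = C$ for all $C$. Applying this with $C = \Lambda$ forces $\Lambda$ itself to be a summand of a direct sum of injectives — actually, since $Tr(\mathscr{I}, \Lambda)$ is an image of a sum of injectives and equals $\Lambda$, and $\Lambda$ is projective, $\Lambda$ is a direct summand of an injective module, hence injective. Conversely, if $_{\Lambda}\Lambda$ is injective, then in the cosyzygy sequence~\eqref{cosyz-seq} the inclusion $\iota : \Lambda \to I$ splits, so $(\iota, C)$ is a split epimorphism for every $C$; but its cokernel is $(\overline{\Lambda, C})$, which must then vanish, giving $\injc = 0$.

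\textbf{Main obstacle.} The only delicate point is the direction $b) \Rightarrow c)$: from ``$\injc(C) = \{0\}$ for all $C$'' one must extract selfinjectivity of $_{\Lambda}\Lambda$. The clean way is to evaluate at $C = \Lambda$: vanishing of $\injc(\Lambda) = \oh(\Lambda, \Lambda)$ means the identity map $\Lambda \to \Lambda$ factors through an injective, i.e., there exist $\Lambda \overset{g}\to J \overset{h}\to \Lambda$ with $hg = 1_{\Lambda}$ and $J$ injective. This exhibits $\Lambda$ as a direct summand of the injective module $J$, and a direct summand of an injective is injective, so $_{\Lambda}\Lambda$ is selfinjective. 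This is genuinely the easiest witness and avoids any appeal to the more subtle structure of traces — everything else is bookkeeping dual to Proposition~\ref{P:s-epi}.
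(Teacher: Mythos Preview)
Your proposal is correct and follows essentially the same route as the paper: for $a)\Rightarrow b)$ you embed $C$ into an injective and use $\injc(J)=\{0\}$, and for $b)\Rightarrow c)$ you evaluate at $C=\Lambda$ so that the identity map factors through an injective, making $\Lambda$ a direct summand of an injective. The detour through $Tr(\mathscr{I},\Lambda)=\Lambda$ in your proof plan is unnecessary (and would need the extra observation that only finitely many injectives are required to hit the element $1$), but you correctly identify the clean argument in your ``Main obstacle'' paragraph, which is exactly what the paper does.
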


\begin{proof}
 Given any left $\Lambda$-module $C$ choose a monomorphism 
 $C \to J$ with~$J$ injective. Assuming $\injc$ preserves monomorphisms,  we have a monomorphism $\injc (C) \to  \injc (J)$. Since $J$ is injective,  $\injc (J) = \{0\}$ and therefore $\injc (C) = \{0\}$. This proves a) $\Rightarrow$ b). The converse is trivial. \textcolor{purple}{The implication b) $\Rightarrow$ c)  is also trivial. 
% To prove c)  $\Rightarrow $ b), choose an epimorphism $P \to C$ with $P$ projective. Since $\injc$ preserves epimorphisms and $\injc(P) =0$, we have $\injc (C) =0$. Thus the first three conditions are equivalent. 
 Now assume that 
 %$\injc = 0$. Then 
 $\injc(\Lambda) = 0$. This implies that the identity map on 
 $\Lambda$ factors through an injective, making  $\Lambda$ injective. This proves c) $\Rightarrow $ d).  The implication d) $\Rightarrow $ b) is immediate from the definition of $\injc$.}
\end{proof}

 \textcolor{purple}{The just proved result allows to prove the nonexistence of a functorial splitting for the cotorsion functor.} 

\begin{corollary}
 \textcolor{purple}{Suppose the cotorsion splits off as a functor, i.e., the exact sequence $0 \lra \injc^{-1} \lra \mathbf{1} \lra \injc \lra 0$ splits. Then $\injc =0$, i.e., $\Lambda$ is left selfinjective.}
 \end{corollary}
 
 \begin{proof}
 \textcolor{purple}{Under the assumption, $\injc$ becomes a subfunctor of $\mathbf{1}$ and hence preserves monomorphisms. The result follows.}
\end{proof}

Now we want to examine the properties of the cotorsion functor 
$\injc$ when the injective envelope of $\Lambda$ is projective.

\begin{proposition}
Suppose the injective envelope of $\prescript{}{\Lambda}\Lambda$ is projective. Then
 \begin{enumerate}
 
 \item $\injc \simeq \Ext^{1}(\Sigma \Lambda, \blank)$.
 \smallskip
 
 \item If $0 \to C' \to C \to C'' \to 0$ is an exact sequence, then \textcolor{purple}{there is a homomorphism $\delta : (\Sigma \Lambda, C'') \to \injc(C')$ such that} the induced sequence
\[
0 \to (\Sigma \Lambda, C') \to (\Sigma \Lambda, C) \to (\Sigma \Lambda, C'') \overset{\textcolor{purple}{\delta}}\to \injc(C') \to \injc(C) \to \injc(C'') \to 0
\]
is exact. In particular, $\injc$ is right-exact.
\smallskip

\item The natural transformation $\injc(\mathbf{1} \to \injc) = \injc \to \injc^{2}$ is an isomorphism, i.e., \textcolor{purple}{$\injc$ is an idempotent coradical and thus the cotorsion-free class of $\injc$ is the torsion class of a torsion theory. That torsion theory is hereditary, i.e., its torsion class is closed under submodules.}
\smallskip
 
\item The cotorsion-free class of $\injc$ is closed under extensions.
\smallskip

\item \textcolor{purple}{The cotorsion class of $\injc$ is closed under extensions.}
\end{enumerate}
\end{proposition}
 
 \begin{proof}
 (1) Map the short exact sequence 
 \[
0 \lra \Lambda \lra I \lra \Sigma \Lambda \lra 0
\]
into an arbitrary module and pass to the long exact sequence.

(2) Follows from \textcolor{purple}{the definition of $\injc$ and} the snake lemma. \textcolor{purple}{$\delta$ is just the connecting homomorphism.}
 
(3) By (2), $\injc$ is right-exact. Thus the short exact sequence
 \[
 0 \lra I(\Lambda, C) \lra C \lra \injc(C) \lra 0
 \]
 gives rise to the exact sequence
 \[
 \injc(I(\Lambda, C)) \lra \injc(C) \lra \injc^{2}(C) \lra 0.
 \]
By Proposition~\ref{P:cotorsion-free-submodule}, $\injc(I(\Lambda, C)) \simeq  \{0\}$, whence the first claim. \textcolor{purple}{Thus $\injc$ is an idempotent coradical, which, by duality, makes $\injc^{-1}$ an idempotent radical, and hence the torsion class of a torsion theory. That torsion class is the class of cotorsion-free modules. The torsion-free class is precisely the class of cotorsion modules. By~\cite[VI, Proposition 3.2]{S-75}, a torsion theory is hereditary if and only if its torsion-free class is closed under injective envelopes. Since injective modules are cotorsion-free, this condition is satisfied. }
 
(4) Apply $\injc$ to a short exact sequence $0 \to C' \to C \to C'' \to 0$. The resulting exact sequence $\injc(C') \to \injc(C) \to \injc(C'') \to 0$ shows that if the end terms vanish, then so does the middle term.

\textcolor{purple}{(5) This is a general property of torsion-free classes of torsion theories~(\cite[VI, Proposition 2.2]{S-75}).}
\end{proof}

\newcommand{\fpr}{\mathrm{fp}}

%\section{The injective stabilization of the tensor product and duality}
%\section{The small functor category}
%
%
%
%
%
%For any right module $A$ and any left module $B$, we have described how to compute the abelian group $A\ot B=(A\ot\blank)(B)$.  Because the assignment  $A \mapsto A\ot \blank$ is functorial, the abelian group $A\ot B$ can be obtained via a second functor, which we will now discuss. Given a left module $B$, an injective $I$, and an exact sequence $0\to B\to I$, we denote the kernel of the corresponding natural transformation
%$\blank \otimes B\to \blank \otimes I$ by $\blank\ot B$. Thus we have an exact sequence of additive functors 
%\[
%0\to \blank\ot B\to \blank \otimes B\to \blank \otimes I
%\]
%Since it remains exact when evaluated at any right module $A$, we have
%\[
%(\blank\ot B)(A) \simeq (A\ot\blank)(B),
%\]
%which is part of the statement that $\ot$ is a bifunctor. We want to look at this bifunctor in more detail.

%\subsection{The small functor category}

%+++++++++++++++

\section{Torsion, cotorsion, and the Auslander-Gruson-Jensen functor}\label{S:duality}

Throughout this section $\Mod(\Lambda)$ will denote the category of all right 
$\Lambda$-modules, while $\mod(\Lambda)$ will stand for the full subcategory determined by the finitely presented modules.

Recall that a covariant functor $F:\Mod(\Lambda^{op})\to \ab$ is finitely presented if there are modules $X,Y$ and a sequence of natural transformations \[
(Y,\blank) \lra (X,\blank) \lra F \lra 0
\]
such that for any left module $M$, the sequence of abelian groups 
\[
(Y,M) \lra (X,M) \lra F(M) \lra 0
\]
is exact.  Notice that a finitely presented functor is automatically additive. Given finitely presented functors $F$ and $G$, it is easily verified that the natural transformations between $F$ and $G$ form an abelian group. 

\begin{theorem}[\cite{A66}, Theorem 2.3]\label{T:afpthm}
Let $\fp(\Mod(\Lambda^{op}),\ab)$ denote the category of all finitely presented covariant functors together with natural transformations between them. Then:
\begin{enumerate}
\item $\fp(\Mod(\Lambda^{op}),\ab)$ is abelian.  A sequence of finitely presented functors is exact if and only if it is exact componentwise.
\smallskip
\item The projectives are precisely the representable functors $(M,\blank)$.
\smallskip
\item Every finitely presented functor $F\in \fp(\Mod(\Lambda^{op}),\ab)$ has a projective resolution of the form 
\[
0 \lra (Z,\blank) \lra (Y,\blank) \lra (X,\blank) \lra F \lra 0
\]
\end{enumerate}
\end{theorem} \qed

Moreover, since $\Mod(\Lambda^{op})$ has enough projectives, 
$\fp(\Mod(\Lambda^{op}),\ab)$ has enough injectives. This result is due to Ron Gentle and appears in~\cite{Gen}, where the existence of injectives is shown in Proposition 1.4 and the discussion following that proposition.  The fact that 
$\fp(\Mod(\Lambda^{op}),\ab)$ has enough injectives allows one to compute right derived functors.

In addition to $\fp(\Mod(\Lambda^{op}),\ab)$, we will be looking at the category $(\mod(\Lambda),\ab)$ of additive functors $\mod(\Lambda) \to \ab$. Both categories contain certain functors which are fundamental to the study of model theory of modules. In order to understand these functors, we first remark that the notion of finitely presented functor also makes sense when the domain is just an additive category. 
%$F: \mathcal{A}\to \ab$ for an additive category $\mathcal{A}$.  Such a functor is finitely presented if there are $A,B\in \mathcal{A}$ and an exact sequence $$(B,\blank)\to (A,\blank)\to F\to 0$$  where exactness is understood componentwise, meaning that for any object $C\in \mathcal{A}$, the sequence of abelian groups 
%\[
%(B,C)\to (A,C)\to F(C)\to 0
%\]
%is exact.   
%
Thus the category $\fp(\mod(\Lambda),\ab)$ consists of all functors $F:\mod(\Lambda)\to \ab$ for which there are finitely presented modules $X,Y$ and a presentation 
\begin{equation}\label{E:presentation}
 (Y,\blank) \lra (X,\blank) \lra F\lra 0
\end{equation}
If the module $X$ is finitely presented, then the representable functor 
\[
(X,\blank):\Mod(\Lambda) \lra \ab
\]
commutes with filtered colimits and is thus the colimit extension of its restriction to finitely presented modules.   As a result, given any finitely presented functor  $F\in \fp(\mod(\Lambda),\ab)$ with presentation~\eqref{E:presentation},
the functor $\overset{\to}{F}:\Mod(\Lambda)\to \ab$ has a presentation 
\[
(Y,\blank) \lra (X,\blank) \lra \overset{\to}{F}\to 0
\] 
Hence $\overset{\to}{F}$ is finitely presented as a functor on the large module category $\Mod(\Lambda)$.  Thus $\fp(\mod(\Lambda),\ab)$ is a subcategory of both the functor category $(\mod(\Lambda),\ab)$ and the functor category 
$\fp(\Mod(\Lambda),\ab)$, the latter via $F\mapsto\overset{\to}{F}$.    

As these functors sit inside the three different functor categories $\fp(\mod(\Lambda),\ab)$, $(\mod(\Lambda),\ab)$, and $\fp(\Mod(\Lambda),\ab)$, where two of these categories consist of finitely presented functors with different domain categories and the third has both finitely presented and non-finitely presented functors, the term finitely presented may become confusing.  A first step to escape this quandary is to use the equivalence between 
$(\mod(\Lambda),\ab)$ and the category of functors on $\Mod(\Lambda)$ that commute with filtered colimits.  This allows us to view both $\fp(\Mod(\Lambda),\ab)$ and $(\mod(\Lambda),\ab)$ as consisting of functors on $\Mod(\Lambda)$.    

Once this convention is taken, we can identify the functors in 
$ \fp(\mod(\Lambda),\ab)$ as being the intersection or the categories $\fp(\Mod(\Lambda),\ab)$ and $(\mod(\Lambda),\ab)$.   We will use the following terminology. 
%to discuss these functors.   
A  functor $F:\Mod(\Lambda)\to \ab$ is called a \texttt{pp-functor} if there exist finitely presented modules $X$ and $Y$ and a presentation 
\[
(Y,\blank) \lra (X,\blank) \lra F\to 0
\]  
With this terminology set, the full subcategory of $\fp(\Mod(\Lambda^{op}),\ab)$ consisting of the pp-functors is equivalent to the functor category $\fp(\mod(\Lambda^{op}),\ab)$, and the full subcategory of $(\mod(\Lambda),\ab)$ consisting of all pp-functors is equivalent to the functor category $\fp(\mod(\Lambda),\ab)$.  In addition, these two full subcategories are abelian and their inclusions are exact.

The Auslander-Gruson-Jensen (AGJ) duality, discovered by Gruson and Jensen in~\cite{GJ} and independently by Auslander in~\cite{A84}, is a pair of exact contravariant functors
 \[
\xymatrix
	{
	\fp(\mod(\Lambda^{op}),\ab) \ar@/^2pc/[rrr]^{D} 
	&
	&
	& \fp(\mod(\Lambda),\ab) 	\ar@/^2pc/[lll]^{D}
	}
\]
\textcolor{purple}{defined by $DF(B) := (F, B \otimes \blank)$ on functors and by $D \alpha (B) := (\alpha, B \otimes \blank)$ on natural transformations}. It has the following properties:
\begin{enumerate}
\item If $X$ is a finitely presented left module then 
\[
D(X,\blank) \simeq \blank\otimes X \qquad\text{and} \qquad D(\blank\otimes X) \simeq (X,\blank)
\]
\item If $X$ is a finitely presented right module then 
\[
D(X,\blank) \simeq X\otimes\blank \qquad\text{and} \qquad D(X\otimes\blank) \simeq (X,\blank)
\] 
\end{enumerate}

There is a functor 
\[
D_A:\fp(\Mod(\Lambda^{op}),\ab) \lra (\mod(\Lambda),\ab)
\]
 defined by 
 \[
 D_A := R_0 (\epsilon \circ w), 
 \]
 where $\epsilon$ is the tensor embedding%~\cite[(9.2)]{MR-1}
% ~\eqref{tensor-embed} 
\[
 \epsilon : \Mod(\Lambda^{op}) \lra (\mod(\Lambda),\ab) : M \mapsto \blank 
 \otimes M
\]
and $w$ is the defect functor~\cite[Proposition 6.1]{MR-1}.
%(see p.~\pageref{Page:defect}). 
The functor $D_{A}$ is contravariant, exact, and for any representable functor $(M,\blank)$ 
 \[
 D_A(M,\blank)=\blank \otimes M
 \]
 As shown in~\cite[Proposition~9]{RD}, the functor $D_A$ is completely determined by these properties.

\begin{theorem}[~\cite{RD}, Theorems 23 and 29]\label{T:DA}
The functor 
\[D_A:\fp(\Mod(\Lambda^{op}),\ab) \lra (\mod(\Lambda),\ab)
\] 
admits a left adjoint $D_L$ and a right adjoint $D_R$, both of which are fully faithful. The functors $D_R$ and $D_A$ restrict to the Auslander-Gruson-Jensen duality $D$ on the full subcategories of pp-functors. \qed
 \end{theorem}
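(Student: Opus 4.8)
The statement is established in~\cite{RD}; here is the line of argument one would follow, which also makes the functors $D_L,D_R$ explicit for later use. The plan is to construct $D_R$ and $D_L$ by hand and then to identify the units and counits of the two adjunctions by reducing everything to representables, where $D_A$ restricts to the duality $D$ and is pinned down by the relations recalled above. Two structural facts drive the whole argument. First, by Theorem~\ref{afpthm} the category $\fp(\Mod(\Lambda^{op}),\ab)$ is abelian with the representables as projective generators, and it is moreover \emph{complete}: $\prod_i (M_i,\blank)\cong(\coprod_i M_i,\blank)$, so a product of finitely presented functors is again finitely presented, while kernels exist because the category is abelian; dually (Gentle) it has enough injectives, the products of the tensor functors $N\otimes\blank$ with $N$ a finitely presented right module. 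Second, $(\mod(\Lambda),\ab)$ is a Grothendieck category with the representables as projective generators, whose injective objects are exactly the tensor functors $\blank\otimes_\Lambda M$ with $M$ pure-injective (and products thereof), and in which the tensor embedding $\epsilon: M\mapsto\blank\otimes M$ is fully faithful and exact.

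\textbf{The right adjoint.} For $G\in(\mod(\Lambda),\ab)$ I would set $(D_RG)(X):=\Hom_{(\mod(\Lambda),\ab)}(G,\blank\otimes_\Lambda X)$ for $X\in\Mod(\Lambda^{op})$. Choosing a projective presentation $\coprod_j(N_j,\blank)\to\coprod_i(N_i,\blank)\to G\to 0$ and applying Yoneda rewrites $D_RG$ as the kernel of a map $\prod_i(N_i\otimes\blank)\to\prod_j(N_j\otimes\blank)$ between products of finitely presented functors, so $D_RG\in\fp(\Mod(\Lambda^{op}),\ab)$. For the adjunction, pick a presentation $(Y,\blank)\to(X,\blank)\to F\to 0$; exactness of the contravariant $D_A$ gives $D_AF\cong\Ker(\blank\otimes X\to\blank\otimes Y)$, whence $\Hom(G,D_AF)\cong\Ker(\Hom(G,\blank\otimes X)\to\Hom(G,\blank\otimes Y))=\Ker((D_RG)(X)\to(D_RG)(Y))$, and applying $\Hom(\blank,D_RG)$ to the presentation of $F$ identifies $\Hom(F,D_RG)$ with the same kernel; naturality is routine, so $\Hom(G,D_AF)\cong\Hom(F,D_RG)$. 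That $D_R$ is fully faithful amounts to the counit $D_AD_RG\to G$ being invertible, checked on the generators $G=(N,\blank)$ using $D_R(N,\blank)\cong N\otimes\blank$ and $D_A(N\otimes\blank)\cong(N,\blank)$ and then extended by exactness and colimits. Finally, on a pp-functor $G=\Coker((N_1,\blank)\to(N_0,\blank))$ the formula evaluates to $\Ker(N_0\otimes\blank\to N_1\otimes\blank)=D(G)$, which is the asserted restriction of $D_R$.

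\textbf{The left adjoint.} Dually, every $G$ admits an injective coresolution $0\to G\to\blank\otimes M_0\to\blank\otimes M_1$ with $M_0,M_1$ pure-injective; by full faithfulness of $\epsilon$ its differential is $\epsilon(\psi)$ for a module map $\psi:M_0\to M_1$, and I would set $D_LG:=\Coker((M_1,\blank)\xrightarrow{\psi^{\ast}}(M_0,\blank))$, which lies in $\fp(\Mod(\Lambda^{op}),\ab)$ by inspection. One checks $\Hom(D_LG,N\otimes\blank)\cong G(N)$ for all finitely presented right modules $N$ (Yoneda again); writing $D_AF$ as the cokernel of a map of projectives of $(\mod(\Lambda),\ab)$ obtained by applying $D_A$ to an injective coresolution of $F$ in $\fp(\Mod(\Lambda^{op}),\ab)$, this yields $\Hom(D_AF,G)\cong\Hom(D_LG,F)$. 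Full faithfulness of $D_L$ reduces, as before, to the identities $D_L(\blank\otimes M)\cong(M,\blank)$ and $D_A(M,\blank)\cong\blank\otimes M$ on the cogenerating class of tensor functors.

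\textbf{Where the difficulty lies.} The adjunction isomorphisms are soft once the defining formulas for $D_R$ and $D_L$ have been guessed --- they unwind to left-exactness of $\Hom$ together with the Yoneda lemma. The substantive point, and the place where the finitely presented functor machinery of Theorem~\ref{afpthm} and~\cite{Gen} is indispensable, is that the objects $D_RG$ and $D_LG$ produced above actually lie in $\fp(\Mod(\Lambda^{op}),\ab)$: for $D_R$ this rests on the slightly unexpected fact that $\fp(\Mod(\Lambda^{op}),\ab)$, though not cocomplete, is complete (products of representables being representable), and for $D_L$ on the dual closure property together with the identification of the injectives of $(\mod(\Lambda),\ab)$ with tensor functors at pure-injective modules. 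A more structural route --- factoring $D_A=R_0(\epsilon\circ w)$ and composing the adjoints of the tensor embedding $\epsilon$ with those of the defect functor $w$ --- is also available, but it requires comparable care in controlling the zeroth right satellite $R_0$.
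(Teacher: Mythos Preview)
The paper does not actually prove this theorem: it is cited from \cite{RD} and closed immediately with a \qed, so there is no argument in the paper against which to compare your proposal. You have supplied considerably more than the paper itself does. Your outline --- defining $D_RG(X)=\Hom(G,\blank\otimes X)$ and $D_LG$ as a cokernel of representables at the pure-injective modules appearing in an injective coresolution of $G$, then verifying the adjunction isomorphisms by reducing to representables and tensor functors via Yoneda, and checking full faithfulness on generators --- is a reasonable reconstruction of the kind of argument one finds in \cite{RD}, and your closing remark about the alternative route through the factorization $D_A=R_0(\epsilon\circ w)$ points precisely to the structural picture the paper records in the diagram immediately following the theorem.

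Two small imprecisions, neither fatal to the outline. First, the injectives of $\fp(\Mod(\Lambda^{op}),\ab)$ are the functors $M\otimes\blank$ for \emph{arbitrary} right modules $M$, not ``products of tensor functors $N\otimes\blank$ with $N$ finitely presented''; this does not affect your $D_R$ argument, where what you actually use is that the pointwise product $\prod_i(N_i\otimes\blank)$ is finitely presented (which it is: resolve each $N_i$ by finite frees, use exactness of products in $\ab$, and observe that the result is a cokernel of a map between representables $(\Lambda^{(B)},\blank)\to(\Lambda^{(A)},\blank)$). Second, in your $D_L$ paragraph the step ``writing $D_AF$ as the cokernel of a map of projectives of $(\mod(\Lambda),\ab)$ obtained by applying $D_A$ to an injective coresolution of $F$'' presupposes that $D_A$ carries the injectives $M\otimes\blank$ of $\fp(\Mod(\Lambda^{op}),\ab)$ to projectives of $(\mod(\Lambda),\ab)$; this is true, but it does not follow from what you have written up to that point and needs its own justification.
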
 
 
 The foregoing statement is part of the following diagram of functors
 
 \begin{center}\begin{tikzpicture}
\matrix (m) [ampersand replacement= \&,matrix of math nodes, row sep=4em, 
column sep=4em,text height=1.5ex,text depth=0.25ex] 
{\fp(\Mod(\Lambda^{op}),\ab)\&\&(\mod(\Lambda),\ab)\\
\&\&\\
\&\Mod(\Lambda^{op})\&\\}; 
\path[->,    font=\scriptsize]
(m-3-2) edge[bend right=20] node[right=5]{$\epsilon$}(m-1-3)
(m-1-3)edge node[right]{$\ev_\Lambda$}(m-3-2)%evaluation
(m-3-2)edge[bend left=20] node[right=1]{$R_0\epsilon$}(m-1-3);
\path[->,  font=\scriptsize]
(m-1-1) edge node[auto]{$w$}(m-3-2)%defect
(m-3-2) edge[bend left=15] node[right=1]{$\textsf{Y}$}(m-1-1)%Yoneda
(m-3-2) edge[bend right=15] node[right=5]{$L^0\textsf{Y}$}(m-1-1);%Yoneda
\path[->,     font=\scriptsize]
(m-1-1) edge node[auto]{$D_A$}(m-1-3);%AGJ
\path[->,    font=\scriptsize]
(m-1-3) edge[bend right=15] node[above]{$D_L$}(m-1-1);
\path[->,      font=\scriptsize]
(m-1-3) edge[bend left=15] node[above]{$D_R$}(m-1-1);%Adjoint
\end{tikzpicture}\end{center}

Before proceeding, we once again return to an issue with notation.\label{Page:not-mod-inj} Recall that for any functor $F$, the projective stabilization is denoted by $\underline{F}$ and the injective stabilization is denoted by $\overline{F}$. One of the interesting observations from~\cite{AB} is that the projective stabilization of the functor $(B,\blank)$ is the functor $(\underline{B,\blank})$, which sends any module $C$ to the abelian group $(\underline{B, C})$, known as Hom modulo projectives.  In other words, if $F = (B,\blank)$, then we have an isomorphism
\[
\underline{F} \simeq (\underline{B,\blank})
\]       
Continuing with the functor $F = (B,\blank)$, its injective stabilization $\overline{F}$ is $0$ because~$F$ is left-exact.  Therefore, the injective stabilization of $(B,\blank)$ does not return the functor $(\overline{B, \blank})$, which sends any module $C$ to the abelian group $(\overline{B,C})$, known as Hom modulo injectives. Even though $(\overline{B,\blank})$ is not the injective stabilization of $(B,\blank)$, it is injectively stable as it clearly vanishes on injectives. As we remarked before, this functor is also finitely presented: just apply the contravariant Hom functor to a cosyzygy sequence of $B$.

\textcolor{purple}{At this point it is convenient to recall the active/inert terminology introduced earlier in this paper. The passage from torsion to cotorsion was motivated by a somewhat imprecise formal duality between the injective stabilization $A \ot \blank$ of the tensor product and the projective stabilization $(\overline{\blank, C})$ of the contravariant Hom functor. A more precise statement would refer to the \texttt{active} injective stabilization of the tensor product and the \texttt{active} projective stabilization of the contravariant Hom. Following a suggestion by George Janelidze, we denote the latter by 
$(\overset{\leftharpoondown}{\blank, C})$.}\footnote{\textcolor{purple}{A similar harpoon notation, in place of the underline, can be used to denote the \texttt{projective stabilization} of the covariant Hom. With all these conventions in place, we can finally avoid the confusion with the stabilizations and Hom modulo projectives or injectives.}}
 
\textcolor{purple}{On the other hand, the injective torsion was defined as  $\blank \ot \Lambda$, i.e., as the \texttt{inert} injective stabilization of the tensor product with $\Lambda$, and the cotorsion was defined as $(\overset{\leftharpoondown}{\Lambda, \blank})$, i.e., as the \texttt{inert} projective stabilization of Hom from $\Lambda$.}

One of the major reasons for looking at the \textcolor{purple}{\texttt{inert} injective stabilization of the tensor product is that this notion is dual, via the extension of the Auslander - Gruson - Jensen duality by Dean - Russell, to that of the \texttt{inert} projective stabilization of the contravariant Hom functor. Before stating that result we remind the reader that the latter is finitely presented and so the functor $D_{A}$ can be applied to it, and that $D_{A}$ sends representable functors to univariate tensor product functors.}

%The projective stabilization $\underline{\Hom}(A,\blank)$ of the covariant Hom functor is not in general finitely presented, but as we just remarked, the functor $\overline{\Hom}(B,\blank)$ is finitely presented.  The functor $D_A$ sends representable functors to tensor product functors and, as we shall now show, we can recover $\blank\ot B$ using $D_A$.

\begin{theorem}\label{T:cotorthm}
For any module $B$ 
\[
D_A\textcolor{purple}{(\overset{\leftharpoondown}{B, \blank})} \simeq \blank \ot B.\footnote{\textcolor{purple}{By the definition of $D_{A}$, the left-hand side of this formula is a functor defined on finitely presented modules. Thus the right-hand side should also be viewed as a functor with the same domain. However, as we have seen, $\blank \ot B$ preserves filtered colimits and is thus the colimit extension of its restriction to finitely presented modules.}}
\]
\end{theorem}

\begin{proof}
As seen above, the cosyzygy sequence $0 \to B \to I \to \Sigma B \to 0$  yields a presentation 
\[
(I, \blank) \lra (B, \blank) \lra \textcolor{purple}{(\overset{\leftharpoondown}{B, \blank})} \lra 0.
\]
Applying the contravariant exact functor $D_A$ yields an exact sequence of functors in $(\mod(\Lambda),\ab)$
\[
0 \lra D_A\textcolor{purple}{(\overset{\leftharpoondown}{B, \blank})} \lra \blank \otimes B \lra \blank \otimes I,
\]
which establishes that $D_A\textcolor{purple}{(\overset{\leftharpoondown}{B, \blank})} \simeq  \blank \ot B$.      
\end{proof}

\begin{corollary}\label{C:AGJ-q-c}
The Auslander-Gruson-Jensen functor sends the cotorsion functor on left (right) modules to the injective torsion functor on right (left) modules. In short, 
\[
D_A(\injc) \simeq \injt.
\]
Equivalently,
\[
D_A(Tr(\mathscr{I}, \blank)^{-1}) \simeq \overset{\lra}{rej}(\blank, \mathscr{F}).
\]
\end{corollary}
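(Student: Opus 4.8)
The plan is to obtain this as a direct specialization of Theorem~\ref{cotorthm}. The first step is to recognize the cotorsion functor as an instance of Hom modulo injectives in the covariant variable. Unravelling the definition $\injc(C) = \oh(\Lambda, C)$ and letting $C$ range over left modules, we get an identification $\injc = \overline{\Hom}({}_{\Lambda}\Lambda, \blank)$ of functors on $\Mod(\Lambda^{op})$. As noted in the remark preceding Theorem~\ref{cotorthm}, this functor is finitely presented: its length-one presentation
\[
(I, \blank) \lra ({}_{\Lambda}\Lambda, \blank) \lra \overline{\Hom}({}_{\Lambda}\Lambda, \blank) \lra 0
\]
is obtained by applying the contravariant Hom functor to the injective cosyzygy sequence~\eqref{cosyz-seq} of ${}_{\Lambda}\Lambda$. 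Hence $\injc$ is a genuine object of $\fp(\Mod(\Lambda^{op}),\ab)$ and $D_A$ may legitimately be applied to it.

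Dually, the definition $\injt(A) = A \ot \Lambda$ says that $\injt = \blank \ot {}_{\Lambda}\Lambda$ as a functor on $\Mod(\Lambda)$; by Proposition~\ref{P:s-comm-colim} it commutes with filtered colimits, so it is an object of $(\mod(\Lambda),\ab)$. Now I would simply invoke Theorem~\ref{cotorthm} with $B := {}_{\Lambda}\Lambda$: it gives
\[
D_A(\injc) = D_A\,\overline{\Hom}({}_{\Lambda}\Lambda, \blank) \simeq \blank \ot {}_{\Lambda}\Lambda = \injt ,
\]
which is the first assertion. The ``right (left)'' version is obtained by running this argument verbatim with $\Lambda$ replaced by $\Lambda^{op}$ throughout.

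For the ``equivalently'' reformulation, no further work is needed: by~\eqref{Eq:q=Tr-inverse} we have $\injc = Tr(\mathscr{I}, \blank)^{-1}$ and by~\eqref{Eq:Rej-mult} we have $\injt = \overset{\lra}{rej}(\blank, \mathscr{F})$, so substituting these into $D_A(\injc) \simeq \injt$ yields $D_A(Tr(\mathscr{I}, \blank)^{-1}) \simeq \overset{\lra}{rej}(\blank, \mathscr{F})$.

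I do not anticipate a real obstacle: Theorem~\ref{cotorthm} already contains all of the substance. What is left is only the bookkeeping of identifying $\injc$ and $\injt$ with the correct instances of $\overline{\Hom}(B, \blank)$ and $\blank \ot B$, and of verifying -- which is immediate from the cited remark and from Proposition~\ref{P:s-comm-colim} -- that these functors live in the respective functor categories between which $D_A$ operates. The only point worth spelling out is that the isomorphism furnished by Theorem~\ref{cotorthm} is natural by construction, so the resulting $D_A(\injc) \simeq \injt$ is an isomorphism of functors, not merely a pointwise statement.
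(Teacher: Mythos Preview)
Your proposal is correct and follows essentially the same approach as the paper: both set $B := {}_{\Lambda}\Lambda$ in Theorem~\ref{cotorthm} to obtain the first isomorphism, and both derive the second formulation by substituting the identifications $\injc = Tr(\mathscr{I}, \blank)^{-1}$ and $\injt = \overset{\lra}{rej}(\blank, \mathscr{F})$. Your write-up is more expansive in checking that $\injc$ and $\injt$ live in the appropriate functor categories, but this is added detail rather than a different argument.
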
 

\begin{proof}
 For the first isomorphism, set $B := \Lambda$. The second claim follows from 
 Proposition~\ref{P:s=colim-ext} and~\eqref{Eq:q=Tr-inverse}
\end{proof}

\begin{corollary}
\textcolor{purple}{$\Lambda$ is one-sided absolutely pure if and only if every pure injective $\Lambda$-module on the other side is cotorsion-free.}
\end{corollary}

\begin{proof}
Proposition~\ref{P:s-epi} shows that $\Lambda$ is left absolutely pure if and only if $\injt$ is the zero functor on right modules. By the preceding corollary, this is equivalent to $D_A(\injc)=0$. By~\cite[Proposition 19]{RD}, $D_A(\injc)=0$ if and only if $\injc$ vanishes on all pure injective left $\Lambda$-modules.
\end{proof}

\begin{remark}
 As we mentioned in Remark~\ref{R:EJ}, Enochs and Jenda show that any pure injective is cotorsion in their sense. From the point of view of our definitions, one may consider replacing their term ``cotorsion'' by our term ``cotorsion-free''. The just proved corollary then provides a result that may be considered as a ``replacement'' of the result of Enochs and Jenda. 
\end{remark}

%\begin{remark}
% In Proposition~\ref{P:cotorsion-direct} we showed that, for an arbitrary left module $C$ over a left noetherian left hereditary ring $\Lambda$, the epimorphism $\pi : C \lra \injc (C)$ is split. We claim that, in general (i.e., at least for some $\Lambda$), there is no functorial splitting for $\pi$. ????? To see this, assume the opposite. This means that the exact sequence 
%\[
%0 \lra \injc^{-1} \lra \mathbf{1} \lra \injc \lra 0
%\]
%is split. Since the Auslander-Gruson-Jensen functor is exact, Corollary~\ref{C:AGJ-q-c} shows that the sequence 
%\[
%0 \lra \injt \lra \mathbf{1} \lra \injt^{-1} \lra 0
%\]
%of additive functors on the category of finitely presented right 
%$\Lambda$-modules is also split exact. In particular, the torsion submodule of any finitely presented right $\Lambda$-module is a direct summand. 
%
%But it is not difficult to show that, at least for some rings, this statement is false. For example, let $R$ be a commutative (not necessarily noetherian) local ring with a nonzero socle. Assume, in addition, that $R$ is not selfinjective, in which case there are finitely generated $R$-modules with nonzero Bass torsion. But, by~\cite[Proposition~9]{M-10}, the torsion submodule  of the module is contained in the Jacobson radical of the module, and thus cannot be a direct summand.
%\end{remark}

If $M$ is a pure injective left $\Lambda$-module, then 
$\blank\otimes M$ is injective in the functor category 
$(\mod(\Lambda),\ab)$. The functor $D_L$, which is the left adjoint to $D_A$, sends any injective $\blank\otimes M$ to the representable 
$(M,\blank)$. Because $D_L$ is also right-exact, one can easily show the following.  

\begin{proposition}For any pure injective left $\Lambda$-module $M$, 
\[
\textcolor{purple}{(\overset{\leftharpoondown}{M, \blank})} \simeq D_L( \blank\ot M)
\]
\end{proposition}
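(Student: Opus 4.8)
The plan is to start from the cosyzygy presentation of $\overline{\Hom}(M,\blank)$ and apply the exact functor $D_A$ together with its left adjoint $D_L$. Concretely, pick an injective cosyzygy sequence $0 \to M \overset{j}\to I \to \Sigma M \to 0$ for the left module $M$; applying the contravariant Hom functor gives the projective presentation
\[
(I,\blank) \overset{(j,\blank)}\lra (M,\blank) \lra \overline{\Hom}(M,\blank) \lra 0
\]
in $\fp(\mod(\Lambda),\ab)$ (and, after colimit extension, in $\fp(\Mod(\Lambda),\ab)$).

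Next I would run $D_L$ through this presentation. Since $D_L$ is right-exact (it is a left adjoint), it preserves cokernels, so $D_L$ applied to the above presentation yields an exact sequence
\[
D_L(I,\blank) \lra D_L(M,\blank) \lra D_L(\overline{\Hom}(M,\blank)) \lra 0.
\]
I now need to identify the two left-hand terms. Here I use that $M$, being pure injective, makes $\blank \otimes M$ injective in $(\mod(\Lambda),\ab)$, and that $D_L$ sends injectives of the form $\blank\otimes M$ to the representable $\Hom(M,\blank)$ — this is exactly the displayed property of $D_L$ recalled just before the proposition. Dually, by Theorem~\ref{cotorthm} we have $D_A\overline{\Hom}(M,\blank) \simeq \blank \ot M$, and the adjunction between $D_A$ and $D_L$ (together with full faithfulness of $D_L$) should let me recognize $D_L(\blank \ot M)$ as the object whose presentation, under $D_A$, recovers the defining sequence $0 \to \blank \ot M \to \blank \otimes M \to \blank \otimes I$ of the injective stabilization. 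Chasing the two presentations — the one for $\overline{\Hom}(M,\blank)$ above and the one obtained by applying $D_A$ to a presentation of $\blank \ot M$ — and using that $D_A D_L \simeq \id$ on the relevant subcategory, I expect to match $D_L(\blank \ot M)$ term-by-term with $\overline{\Hom}(M,\blank)$.

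The main obstacle I anticipate is bookkeeping around which functor category each object lives in, and making the identification $D_L(\blank\otimes M) \simeq \Hom(M,\blank)$ precise when $M$ is only pure injective (not finitely presented), since the clean statements of the Auslander-Gruson-Jensen duality $D$ are for finitely presented modules. The resolution is to invoke the larger adjoint triple $D_L \dashv D_A \dashv D_R$ from the cited theorem of~\cite{RD}, which is defined on all of $\fp(\Mod(\Lambda^{op}),\ab)$ and $(\mod(\Lambda),\ab)$, rather than just the pp-subcategories; the key input is the stated fact that $D_L$ carries injectives $\blank\otimes M$ (all such $M$, i.e. all pure injectives) to $\Hom(M,\blank)$, and that $D_L$ is right-exact. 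Once these are in hand the argument is a short diagram chase, entirely parallel to the proof of Theorem~\ref{cotorthm} with the roles of $D_A$ and $D_L$ interchanged.
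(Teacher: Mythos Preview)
Your proposal has a genuine gap: you apply $D_L$ to the wrong side. The presentation $(I,\blank) \to (M,\blank) \to \overline{\Hom}(M,\blank) \to 0$ lives in $\fp(\Mod(\Lambda^{op}),\ab)$, which is the \emph{codomain} of $D_L$, not its domain; $D_L$ goes from $(\mod(\Lambda),\ab)$ to $\fp(\Mod(\Lambda^{op}),\ab)$. So the expression ``$D_L(\overline{\Hom}(M,\blank))$'' is not defined, and your displayed sequence $D_L(I,\blank) \to D_L(M,\blank) \to D_L(\overline{\Hom}(M,\blank)) \to 0$ does not make sense. Your fallback via $D_A D_L \simeq \id$ does not rescue this either: full faithfulness of $D_L$ gives $D_A D_L \simeq \id$ on $(\mod(\Lambda),\ab)$, but what you would need is $D_L D_A \overline{\Hom}(M,\blank) \simeq \overline{\Hom}(M,\blank)$, i.e.\ that $\overline{\Hom}(M,\blank)$ lies in the essential image of $D_L$ --- which is essentially the statement you are trying to prove.

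The paper's argument is the mirror image of what you attempted, and it works precisely because it respects the direction of $D_L$. One applies the right-exact contravariant functor $D_L$ to the \emph{defining} left-exact sequence $0 \to \blank\ot M \to \blank\otimes M \to \blank\otimes I$ in $(\mod(\Lambda),\ab)$. Since $M$ and $I$ are pure injective (the latter because it is injective), the stated property $D_L(\blank\otimes N) \simeq (N,\blank)$ applies to both end terms, yielding $(I,\blank) \to (M,\blank) \to D_L(\blank\ot M) \to 0$. Comparing with the cosyzygy presentation of $\overline{\Hom}(M,\blank)$ finishes the proof. Your instinct to parallel Theorem~\ref{cotorthm} was right; you just need to start from the sequence in the domain of $D_L$, not its codomain.
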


\begin{proof}
For the pure injective module $M$ take any monomorphism 
$0\to M\to I$ with $I$ injective. By applying the right-exact contravariant functor $D_L$ to the exact sequence 
\[
0 \lra \blank \ot M \lra \blank\otimes M \lra  \blank\otimes I
\] 
we have the exact sequence of functors 
\[
(I,\blank) \lra (M,\blank) \lra D_L ( \blank\ot M) \lra 0
\]
It follows that $D_L(\blank\ot M) \simeq \textcolor{purple}{(\overset{\leftharpoondown}{M, \blank})}$.  
\end{proof}

\begin{corollary}
If $_{\Lambda} \Lambda$ is pure injective, then $\injc \simeq D_L(\injt)$.  
\qed\end{corollary}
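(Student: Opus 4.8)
The plan is to recognize that this corollary is nothing more than the preceding Proposition specialized to the module $M := {}_{\Lambda}\Lambda$, combined with two definitional identifications of functors made earlier in the paper. First I would recall that, by the very definition of the cotorsion functor, $\injc = \overline{\Hom}(\Lambda,\blank)$ as an endofunctor on $\Lambda$-$\Mod$; this functor is finitely presented (apply the contravariant Hom functor to the cosyzygy sequence~\eqref{cosyz-seq}), so it legitimately lives in $\fp(\Mod(\Lambda^{op}),\ab)$, the codomain of $D_L$. Second, by the definition of the torsion functor and Proposition~\ref{P:s-comm-colim}, $\injt = \blank \ot \Lambda$, and since this functor commutes with filtered colimits it may be regarded as an object of $(\mod(\Lambda),\ab)$, the domain of $D_L$. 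Thus both sides of the asserted isomorphism make sense as stated, with the variances and module sides matching up automatically.

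With those identifications in place, I would simply invoke the preceding Proposition: the hypothesis that ${}_{\Lambda}\Lambda$ is pure injective is exactly the hypothesis of that Proposition with $M := {}_{\Lambda}\Lambda$, so it furnishes a natural isomorphism
\[
\overline{\Hom}(\Lambda,\blank) \simeq D_L(\blank \ot \Lambda).
\]
Rewriting the left-hand side as $\injc$ and the argument of $D_L$ on the right as $\injt$ gives $\injc \simeq D_L(\injt)$, which is the claim.

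I do not expect any genuine obstacle here; the substantive work has already been carried out in the preceding Proposition, which in turn rested on three inputs: that $\blank \otimes M$ is injective in $(\mod(\Lambda),\ab)$ whenever $M$ is pure injective, that the left adjoint $D_L$ carries this injective to the representable $\Hom(M,\blank)$, and that $D_L$ is right-exact, so that applying it to the defining exact sequence $0 \to \blank \ot M \to \blank \otimes M \to \blank \otimes I$ yields the presentation $(I,\blank) \to (M,\blank) \to D_L(\blank \ot M) \to 0$, which is precisely a presentation of $\overline{\Hom}(M,\blank)$. The only point meriting a sentence of care in the write-up is the bookkeeping of sides — $\injc$ is an endofunctor on left modules while $\injt$ is built from a right-module functor — but since $D_L : (\mod(\Lambda),\ab) \to \fp(\Mod(\Lambda^{op}),\ab)$ bridges exactly these two categories, nothing further needs to be adjusted.
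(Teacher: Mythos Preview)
Your proposal is correct and matches the paper's approach exactly: the corollary is marked with a bare \qed\ in the paper, indicating it is the immediate specialization of the preceding Proposition to $M := {}_{\Lambda}\Lambda$, together with the definitional identifications $\injc = \overline{\Hom}(\Lambda,\blank)$ and $\injt = \blank \ot \Lambda$. Your additional remarks about the functor categories and the recap of the Proposition's proof are accurate but more than the paper itself supplies.
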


In the case when $\Lambda$ is an algebra over a commutative ring $R$, the connections between torsion and cotorsion can be made more pointed by utilizing the generalized Auslander-Reiten formula~\cite[Proposition~9.7]{MR-1}. Let $\mathbf{J}$ be an injective $R$-module and $D_{\mathbf{J}} := \Hom_{R} (\blank, \mathbf{J})$. Then
\[
 D_{\mathbf{J}}(A\ot B) \simeq  \textcolor{purple}{(\overset{\!\!\!\!\!\!\!\!\!\!\!\!\!\!\!\leftharpoondown}{B, D_{\mathbf{J}}(A)})}, 
 \]
where $A$ is an arbitrary right $\Lambda$-module and $B$ is an arbitrary left $\Lambda$-module. Specializing to the case $B = \prescript{}{\Lambda}{\Lambda}$, we immediately have

\begin{proposition}\label{P:Ds=qD}
 In the above notation, 
 \[
 D_{\mathbf{J}} \circ \injt \simeq \injc \circ D_{\mathbf{J}},
 \]
 i.e., for each injective $R$-module $\mathbf{J}$ and each right $\Lambda$-module $A$, we have an isomorphism $D_{\mathbf{J}}(\injt(A)) \simeq \injc(D_{\mathbf{J}}(A))$ which is functorial in $A$. \qed
\end{proposition}

\begin{corollary}\label{C:exchange-formula}
 Let $\Lambda$ be an arbitrary ring, $R := \Z$, and, for any right $\Lambda$-module~$A$, let $A^{+} := \Hom_{\Z}(A, \Q/\Z)$ be the character module of $A$.
 Then 
\[
\injt(A)^{+} \simeq \injc(A^{+}) 
\] 
\end{corollary}
\qed

Since character modules are pure injective, we have
\begin{corollary}
 \textcolor{purple}{For any module $A$, the module $\injc(A^{+})$ is pure injective. \qed}
\end{corollary}

\textcolor{purple}{The next application was suggested by the referee.}

\begin{corollary}
 \textcolor{purple}{If $A$ is pure injective, then so is $\injc(A)$.}
\end{corollary}

\begin{proof}
\textcolor{purple}{For any module $A$, the canonical map $A \to A^{++}$ is pure. Assuming that~$A$ is pure injective, we have that this embedding splits and $\injc(A)$ becomes a direct summand of the pure injective $\injc(A^{++})$. The result now follows.}
\end{proof}

\begin{remark}\label{R:q-not-idem}
 As yet another application of Proposition~\ref{P:Ds=qD}, we can now show that, in general, $\injc^{-1}$ is not a radical or, equivalently, $\injc$ is not idempotent. This can be seen from the following example. Let $\Lambda$ be a commutative local finite-dimensional $\Bbbk$-algebra over a field $\Bbbk$. Then $D_{\Bbbk}$ is a duality on the category of finite-dimensional $\Lambda$-modules. Applying~$D_{\Bbbk}$ to the example from Remark~\ref{R:low-torsion}, we have the desired example. Details are left to the reader.
\end{remark}

\textcolor{purple}{In preparation for the next section, we are going to establish yet another connection between torsion and cotorsion by showing that $\injt$ is a subfunctor of $(\injc(\prescript{}{\Lambda}{\Lambda}), \blank)$, where both functors are defined on right $\Lambda$-modules. That this makes sense for the latter is guaranteed by Lemma~\ref{L:q-L-bimodule}, showing that $\injc(\prescript{}{\Lambda}{\Lambda})$ is a bimodule.}

\textcolor{purple}{We begin by recalling the natural transformation 
$\mu_{B} : \blank \otimes B \lra (B^{\ast}, \blank)$ from Lemma~\ref{L:mu-on-proj} (with an obvious change of side). The map  
\[
\mu_{B}(M) : M \otimes B \lra (B^{\ast}, M)
\]
is defined by $\mu_{B}(M)(m \otimes b)(l) :=  m(bl)$ for any $b \in B$, $m \in M$, and $l \in B^{\ast}$, where we are, once again, following the rule ``maps on the opposite side from scalars'', which we shall also do for the rest of the paper. When $B := \prescript{}{\Lambda}{\Lambda}$ we have the canonical isomorphism 
$\mu_{\prescript{}{\Lambda}{\Lambda}} : \blank \otimes \prescript{}{\Lambda}{\Lambda} \lra (\Lambda_{\Lambda}, \blank)$. To avoid overloading the notation, we denote it by simply $\mu_{\Lambda}$. By definition, $\injt$ is a subfunctor of $\blank \otimes \prescript{}{\Lambda}{\Lambda}$, and, by Lemma~\ref{L:q-L-bimodule}, 
$(\injc(\prescript{}{\Lambda}{\Lambda}), \blank)$ is a subfunctor of 
$(\Lambda_{\Lambda}, \blank)$. }

%=================================
%=================================
%=================================

\textcolor{purple}{Choose once again a defining cosyzygy sequence 
\begin{equation}%\label{Eq:def-L-I}
 0 \lra \prescript{}{\Lambda}{\Lambda} \overset{\iota}\lra I \lra \Sigma\Lambda \lra 0.
\end{equation}
Passing to the corresponding univariate tensor product functors we have the defining exact sequence for $\injt$
\[
0 \lra \injt \lra \blank \otimes \prescript{}{\Lambda}{\Lambda} \overset{\blank \otimes \iota}\lra \blank \otimes_{\Lambda} I \lra \blank \otimes_{\Lambda} \Sigma\Lambda \lra 0.
\]
Applying the natural transformations 
$\mu_{\Lambda}$, $\mu_{I}$, and $\mu_{\Sigma\Lambda}$, we have a commutative diagram 
\begin{equation}\label{Eq:s-as-subfunctor}
\begin{gathered}
\xymatrix
	{
	0 \ar[r] 
	& \injt \ar[r] \ar@{.>}[d]^{\kappa}
	& \blank \otimes \prescript{}{\Lambda}{\Lambda} \ar[r]^{\blank \otimes 		\iota} \ar[d]_{\cong}^{\mu_{\Lambda}}
	& \blank \otimes _{\Lambda} I \ar[r] \ar[d]^{\mu_{I}}
	&  \blank \otimes _{\Lambda} \Sigma\Lambda \ar[r] 				\ar[d]^{\mu_{\Sigma\Lambda}}
	& 0
\\
	0 \ar[r]
	& (\injc(\prescript{}{\Lambda}{\Lambda}), \blank) \ar[r]
	& (\Lambda_{\Lambda}, \blank) \ar[r]^{(\iota^{\ast}, \blank)}
	& (I^{\ast}, \blank) \ar[r]
	& (\Sigma\Lambda^{\ast}, \blank) \ar[r]
	& 0
	}
\end{gathered}
\end{equation}
of solid arrows. By the left-exactness of the Hom functor and the universal property of kernels, $\mu_{\Lambda}$ induces a natural transformation $\kappa$, which is clearly monic. We have thus proved}

\begin{proposition}\label{P:s-to-q}
\textcolor{purple}{$\injt$ is a subfunctor of  $(\injc(\prescript{}{\Lambda}{\Lambda}), \blank)$ via $\kappa$.} \qed
\end{proposition}

\textcolor{purple}{On the other hand, starting with the short exact sequence 
\begin{equation}\label{Eq:cot-def}
 0 \lra I(\prescript{}{\Lambda}{\Lambda}, \prescript{}{\Lambda}{\Lambda}) \overset{j}\lra \Lambda_{\Lambda} \lra \injc(\prescript{}{\Lambda}{\Lambda}) \to 0, 
\end{equation}
where $I(\prescript{}{\Lambda}{\Lambda}, \prescript{}{\Lambda}{\Lambda})$ is the bimodule of endomorphisms of $\prescript{}{\Lambda}{\Lambda}$ factoring through injectives or, equivalently, this is the trace $Tr(\mathscr{I}, \prescript{}{\Lambda}{\Lambda})$ of injectives in $\prescript{}{\Lambda}{\Lambda}$. Passing to the corresponding covariant Hom functors and comparing the result with the bottom row of~\eqref{Eq:s-as-subfunctor} we have a commutative diagram 
\begin{equation}\label{Eq:beta}
\begin{gathered}
\xymatrix
	{
	0 \ar[r] 
	& \injt \ar[r] \ar@{.>}[d]^{\kappa}
	& \blank \otimes \prescript{}{\Lambda}{\Lambda} \ar[r] 
	\ar[d]_{\cong}^{\mu_{\Lambda}}
	& \injt^{-1 }\ar[r] \ar@{.>}[d]^{\beta}
	& 0
\\
	0 \ar[r]
	& (\injc(\prescript{}{\Lambda}{\Lambda}), \blank) \ar[r]
	& (\Lambda_{\Lambda}, \blank) \ar[r]^>>>>>{(j, \blank)}
	& (Tr(\mathscr{I}, \prescript{}{\Lambda}{\Lambda}), \blank) 
	}
\end{gathered}
\end{equation}
of solid arrows with exact rows. By the universal property of cokernels, $\mu_{\Lambda}$ induces  
$\beta : \injt^{-1} \lra (Tr(\mathscr{I}, \prescript{}{\Lambda}{\Lambda}), \blank)$.}

%%%%%%%%%%%%%%%%%%%%%%%
%%%%%%%%%%%%%%%%%%%%%%%
%%%%%%%%%%%%%%%%%%%%%%%

\begin{proposition}\label{P:beta-on-s-1}
\textcolor{purple}{The natural transformation $\beta : \injt^{-1} \lra (Tr(\mathscr{I}, \prescript{}{\Lambda}{\Lambda}), \blank)$ evaluates to an epimorphism on any injective right $\Lambda$-module.}
\end{proposition}

\begin{proof}
\textcolor{purple}{Since mapping into an injective is an exact functor, the bottom row of the diagram~\eqref{Eq:beta} evaluates to a short exact sequence on any injective. The result now follows from the snake lemma.}
\end{proof}
\medskip

\textcolor{purple}{Proposition~\ref{P:s-to-q} has an analog for $\injc$. To explain it, we recall the natural transformation $\tau_{F} :F(\Lambda) \otimes \blank \lra F$ (see~\cite[p.~6]{MR-1} for details), where $F$ is an additive functor on left $\Lambda$-modules. Taking $F := \injc$, we have a natural transformation $\tau_{\injc} : \injc(\prescript{}{\Lambda}{\Lambda}) \otimes \blank \lra \injc$.}

\begin{proposition}\label{P:tau-q-epic}
 \textcolor{purple}{The natural transformation $\tau_{\injc} : \injc(\prescript{}{\Lambda}{\Lambda}) \otimes \blank \lra \injc$ is epic. Thus~$\injc$ is a quotient functor of 
 $\injc(\prescript{}{\Lambda}{\Lambda}) \otimes \blank$.}
\end{proposition}

\begin{proof}
 \textcolor{purple}{Recall the projective resolution of $\injc$ from Remark~\ref{R:pdq}:
 \[
 0 \lra (\Sigma\Lambda, \blank) \lra (I,\blank) \lra 
(\prescript{}{\Lambda}{\Lambda}, \blank) \lra \injc \lra 0.
 \]
% Evaluating the last transformation on $\prescript{}{\Lambda}{\Lambda}$ and using the right-exactness of the tensor product, we have an epimorphism $(\prescript{}{\Lambda}{\Lambda}, \prescript{}{\Lambda}{\Lambda}) \otimes \blank \lra \injc(\prescript{}{\Lambda}{\Lambda}) \otimes \blank$. 
 By the naturality of $\tau$, we have a commutative square
\[
\xymatrix
	{
	(\prescript{}{\Lambda}{\Lambda}, \prescript{}{\Lambda}{\Lambda}) \otimes \blank 
	\ar@{->>}[r] \ar[d]_{\tau_{(\Lambda, \blank)}}^{\cong}
	& \injc(\prescript{}{\Lambda}{\Lambda}) \otimes \blank \ar[d]^{\tau_{\injc}}
\\
	(\prescript{}{\Lambda}{\Lambda}, \blank) \ar@{->>}[r]
	& \injc
	}
\]
with an epic bottom arrow. Since the forgetful functor $(\prescript{}{\Lambda}{\Lambda}, \blank)$ obviously commutes with coproducts and is right-exact, $\tau_{(\Lambda, \blank)}$ is an isomorphism by~\cite[Theorem~3.5]{MR-1}). (This is also easy to see directly.) It follows that $\tau_{\injc}$ is epic.}
\end{proof}

\textcolor{purple}{Before stating an analog of Proposition~\ref{P:beta-on-s-1} for $\injc$, we draw, for future reference, a commutative diagram similar to~\eqref{Eq:s-as-subfunctor}:
\begin{equation}\label{Eq:q-as-quot-functor}
 \begin{gathered}
\xymatrix
	{
	0 \ar[r]
	& \Sigma\Lambda^{\ast} \otimes \blank \ar[r] 					\ar[d]_{\tau_{(\Sigma\Lambda^{\ast}, \blank)}}
	& I^{\ast} \otimes \blank \ar[r] \ar[d]_{\tau_{(I, \blank)}}
	& \Lambda_{\Lambda} \otimes \blank \ar[r] 
	\ar[d]^{\cong}_{\tau_{(\Lambda, \blank)}}
	& \injc(\prescript{}{\Lambda}{\Lambda}) \otimes \blank \ar[d]^{\tau_{\injc}} 		\ar[r]
	& 0
\\
	0 \ar[r]
	&(\Sigma\Lambda, \blank) \ar[r]
	& (I, \blank) \ar[r]^{(\iota, \blank)} 
	& (\prescript{}{\Lambda}{\Lambda}, \blank) \ar[r]
	& \injc \ar[r]
	& 0,
	}
\end{gathered}
\end{equation}
where the bottom row is the defining exact sequence for $\injc$. By the naturality of $\tau$, we have a commutative diagram 
\begin{equation}\label{Eq:tau-q-1}
 \begin{gathered}
\xymatrix
	{
	& Tr(\mathscr{I}, \prescript{}{\Lambda}{\Lambda}) \otimes \blank \ar[r] 		\ar[d]^{\tau_{\injc^{-1}}}
	& \Lambda_{\Lambda} \otimes \blank \ar[r] 
	\ar[d]^{\tau_{(\Lambda, \blank)}}_{\cong}
	& \injc(\prescript{}{\Lambda}{\Lambda}) \otimes \blank \ar[d]^{\tau_{\injc}} 		\ar[r]
	& 0
\\
	0 \ar[r]
	& \injc^{-1} \ar[r]^{(\iota, \blank)} 
	& (\prescript{}{\Lambda}{\Lambda}, \blank) \ar[r]
	& \injc \ar[r]
	& 0.
	}
\end{gathered}
\end{equation}
}

\begin{proposition}\label{P:tau-1-mono}
\textcolor{purple}{The natural transformation $\tau_{\injc^{-1}} : Tr(\mathscr{I}, \prescript{}{\Lambda}{\Lambda}) \otimes \blank \lra \injc^{-1}$ evaluates to a monomorphism on any projective left $\Lambda$-module.}
\end{proposition}

\begin{proof}
 \textcolor{purple}{Since tensoring with a projective is an exact functor, the top row of the diagram~\eqref{Eq:tau-q-1} evaluates to a short exact sequence on any projective. The result now follows from the snake lemma.}
\end{proof}
\medskip

\textcolor{purple}{The striking similarities between the diagrams~\eqref{Eq:s-as-subfunctor} and~\eqref{Eq:beta} and, respectively, \eqref{Eq:q-as-quot-functor} and~\eqref{Eq:tau-q-1} raise a question of whether there is a connection between the natural transformation $\mu$ and $\tau$. We are going to answer this question in the positive. To this end, we recall the operation $D$ used in the definition  of the AGJ duality. It acts on functors and natural transformations between them by the rules $DF(B) := (F, B \otimes \blank)$, where $F$ is a functor, and $D \alpha (B) := (\alpha, B \otimes \blank)$, where $\alpha$ is a natural transformation, for any right $\Lambda$-module $B$.}

\textcolor{purple}{Let $A$ be a left $\Lambda$-module. For brevity, set 
\[
\tau_{A} := \tau_{(A, \blank)} : A^{\ast} \otimes \blank \lra (A, \blank).
\]
}
\begin{proposition}\label{P:D-tau=mu}
 \textcolor{purple}{$D\tau_{A} \simeq \mu_{A}$.\footnote{\textcolor{purple}{If we set $\mu_{\blank \otimes A} := \mu_{A}$, then, in full detail, we claim that $D\tau_{(A, \blank)} \simeq \mu_{\blank \otimes A}$.}}}
\end{proposition}

\begin{proof}
 \textcolor{purple}{Given a right $\Lambda$-module $B$, the abelian group homomorphism 
 \[
 D\tau_{A}(B) : \big((A,\blank), B \otimes \blank\big) \lra 
 \big(A^{\ast} \otimes \blank, B \otimes \blank \big)
 \]
 is effected by the precomposition with $\tau_{A}$, i.e., for any 
 $\beta \in \big((A,\blank), B \otimes \blank\big)$, 
 $D\tau_{A}(B)(\beta)$ is the composition 
 \[
 A^{\ast} \otimes \blank \overset{\tau_{A}}\lra (A, \blank) \overset{\beta} \lra B \otimes \blank.
 \]
Recall the easily proved (via projective presentations and evaluation on $\Lambda$) fact that the natural transformations between univariate tensor product functors are in a bijective correspondence with the morphisms of their fixed arguments. This gives an isomorphism of abelian groups
\[
(A^{\ast} \otimes \blank, B \otimes \blank) \lra (A^{\ast}, B). 
\]
On the other hand, the Yoneda lemma yields an isomorphism 
\[
\big((A,\blank), B \otimes \blank\big) \lra B \otimes A.
\]
Putting all things together, we have a diagram
\[
\xymatrix
	{
	\big((A,\blank), B \otimes \blank \big) \ar[r]^{D\tau_{A}} 			\ar[d]_{\cong}
	& \big(A^{\ast} \otimes \blank, B \otimes \blank \big) 				\ar[d]^{\cong}
\\
	B \otimes A \ar[r]^{\mu_{A}}
	& (A^{\ast}, B).
	}
\]  
The notation-challenging, but otherwise straightforward, verification that this diagram commutes is left to the reader.}
\end{proof}

\textcolor{purple}{It is now easy to see that the diagrams~\eqref{Eq:s-as-subfunctor} and~\eqref{Eq:beta} can be obtained, respectively, from the diagrams \eqref{Eq:q-as-quot-functor} and~\eqref{Eq:tau-q-1} by applying the functor $D$. Therefore, one has an alternative way to prove Propositions~\ref{P:s-to-q} and \ref{P:beta-on-s-1} by first establishing Propositions~\ref{P:tau-q-epic} and \ref{P:tau-1-mono} and then applying Proposition~\ref{P:D-tau=mu}.}

\section{\textcolor{purple}{Torsion and cotorsion over FPE and FGE rings}}

\textcolor{purple}{The goal of this section is to show that quite a lot more can be said about torsion and cotorsion when the injective envelope of the ring is finitely presented. It is convenient to first give a name to the class of rings with this property.}  

\begin{definition}
 \textcolor{purple}{A ring $\Lambda$ is said to be left FPE if the injective envelope of $\Lambda$ viewed as a left module over itself is finitely presented.}
\end{definition}

\textcolor{purple}{Clearly, all artin algebras are FPE, so this class is quite large. In fact, any artin algebra is two-sided FPE. For the next several results we are going to assume, unless stated otherwise,  that $\Lambda$ is left FPE. Thus, all terms of the defining cosyzygy sequence }
\begin{equation}\label{Eq:def-L-I}
\textcolor{purple}{ 0 \lra \prescript{}{\Lambda}{\Lambda} \overset{\iota}\lra I \lra \Sigma\Lambda \lra 0}
\end{equation}
\textcolor{purple}{are finitely presented. Our first goal is to compute all derived functors of torsion and cotorsion over left FPE rings.}

\textcolor{purple}{In the proof of Proposition~\ref{P:fp implies s=t} we showed that, if the left $\Lambda$-module $A$ is finitely presented, then the canonical natural transformation $\mu_{A} : \blank \otimes A \lra (A^{\ast}, \blank)$ evaluates to an isomorphism on any injective right 
$\Lambda$-module. In particular, this applies to $A := I$, which makes 
$\mu_{I}$ from the diagram~\eqref{Eq:s-as-subfunctor} an isomorphism on injectives. Since $(\injc(\prescript{}{\Lambda}{\Lambda}),\blank)$ 
is left-exact, we have proved, by virtue of the analog of~\cite[Lemma~3.2, (7)]{MR-1} for right derived functors,}

\begin{theorem}
\textcolor{purple}{If $\Lambda$ is left FPE, then  
 $\kappa : \injt \lra (\injc(\prescript{}{\Lambda}{\Lambda}), \blank)$ is the zeroth right derived functor of $\injt$ on right $\Lambda$-modules.  Consequently,  
\[
R^{i}\injt \simeq \Ext^{i}(\injc(\prescript{}{\Lambda}{\Lambda}), \blank)
\]
for all integers $i$. \qed}
\end{theorem}

\begin{remark}
\textcolor{purple}{The just proved theorem allows to compute the torsion of any injective right $\Lambda$-module $J$ over a left FPE ring $\Lambda$ by utilizing the isomorphism $\injt(J) \simeq (\injc(\prescript{}{\Lambda}{\Lambda}), J)$.}
\end{remark}

\textcolor{purple}{The exact sequence $0 \lra \injt \lra \blank \otimes \prescript{}{\Lambda}{\Lambda} \overset{\blank \otimes \iota}\lra \blank \otimes_{\Lambda} I 
$ shows that $\injt$ is a finitely presented functor and the above theorem immediately tells us what its defect is.}

\begin{corollary}
 \textcolor{purple}{$w(\injt) \simeq \injc(\prescript{}{\Lambda}{\Lambda})$.} 
\end{corollary}

\begin{proof}
 \textcolor{purple}{Follows from~\cite[Proposition 3.12]{MR-1}.\footnote{This is also easy to compute directly from the copresentation of $\injt$.}}
\end{proof}

\textcolor{purple}{Since the bottom row of the diagram~\eqref{Eq:beta} evaluates to a short exact sequence on any injective, we have}
\begin{theorem}
 \textcolor{purple}{If $\Lambda$ is left FPE, then  
 $\beta : \injt^{-1} \lra (Tr(\mathscr{I}, \prescript{}{\Lambda}{\Lambda}), \blank)$ is the zeroth right derived functor of $\injt^{-1}$ on right $\Lambda$-modules.  Consequently,  
\[
R^{i}\injt^{-1} \simeq \Ext^{i}(Tr(\mathscr{I}, \prescript{}{\Lambda}{\Lambda}), \blank)
\]
for all integers $i$. \qed}
\end{theorem}

\begin{corollary}
 \textcolor{purple}{$w(\injt^{-1}) \simeq Tr(\mathscr{I}, \prescript{}{\Lambda}{\Lambda})$. \qed}
\end{corollary}

\textcolor{purple}{To determine the left derived functors of $\injt$ and $\injt^{-1}$ we don't need any assumptions on $\Lambda$. Since projectives have zero torsion, we have}

\begin{proposition}
 \textcolor{purple}{Without any assumptions on $\Lambda$, $L_{i}\injt = 0$ for all integers $i$. \qed}
\end{proposition}

\textcolor{purple}{Since $P \lra \injt^{-1}(P)$ is an isomorphism for any projective $P$, we have }

\begin{proposition}
 \textcolor{purple}{Without any assumptions on $\Lambda$, $L_{0}\injt^{-1} = \mathbf{1}$ and $L_{i}\injt^{-1} = 0$ for any integer $i \geq 1$. \qed}
\end{proposition}

\textcolor{purple}{Having determined the derived functors of torsion, we turn our attention to cotorsion. It turns out it suffices to assume that $I$ be finitely generated rather than finitely presented.}

\begin{definition}
 \textcolor{purple}{A ring $\Lambda$ is said to be left FGE if the injective envelope of  $\Lambda$ viewed as a left module over itself is finitely generated.}
\end{definition}

\textcolor{purple}{Assume now that $\Lambda$ is left FGE, i.e., $I$ is finitely generated.
In this case, $(I, \blank)$ commutes with coproducts and 
therefore, by~\cite[Lemma 3.4]{MR-1}, the natural transformation 
$\tau_{(I, \blank)} : I^{\ast} \otimes \blank \lra (I, \blank)$ evaluates to an isomorphism on projectives. The diagram~\eqref{Eq:q-as-quot-functor} shows that the same is true for 
$\tau_{\injc} : \injc(\prescript{}{\Lambda}{\Lambda}) \otimes \blank \lra \injc$. Since $\tau_{\injc} : \injc(\prescript{}{\Lambda}{\Lambda}) \otimes \blank$ is right-exact, we have proved}

\begin{theorem}
\textcolor{purple}{ If $\Lambda$ is left FGE, then  
 $\tau_{\injc} : \injc(\prescript{}{\Lambda}{\Lambda}) \otimes \blank \lra \injc$ is the zeroth left derived functor of $\injc$ on left $\Lambda$-modules.  Consequently,  
\[
L_{i}\injc \simeq \Tor_{i}(\injc(\prescript{}{\Lambda}{\Lambda}), \blank)
\]
for all integers $i$. \qed}
\end{theorem}

\begin{remark}
\textcolor{purple}{The just proved theorem allows to compute the cotorsion of any projective left $\Lambda$-module $P$ over a left FGE ring $\Lambda$ by utilizing the isomorphism 
$\injc(\prescript{}{\Lambda}{\Lambda}) \otimes P \simeq \injc(P) $.}
\end{remark}

\textcolor{purple}{Since $\injc$ is determined by the short exact sequence~\eqref{Eq:def-L-I}, we have}

\begin{proposition}\label{P:w(q)=0}
 \textcolor{purple}{Without any assumptions on $\Lambda$, $w(\injc) = 0$. \qed}
\end{proposition}

\textcolor{purple}{As the top row of the diagram~\eqref{Eq:tau-q-1} evaluates to a short exact sequence on any projective, $\tau_{\injc^{-1}}$ evaluates to an isomorphism on any projective. And since  $Tr(\mathscr{I}, \prescript{}{\Lambda}{\Lambda}) \otimes \blank$ is right-exact, we have proved}

\begin{theorem}
 \textcolor{purple}{If $\Lambda$ is left FGE, then $\tau_{\injc^{-1}} : Tr(\mathscr{I}, \prescript{}{\Lambda}{\Lambda}) \otimes \blank \lra \injc^{-1}$ is the zeroth left derived functor of $\injc^{-1}$ on left $\Lambda$-modules. Consequently,
 \[
 L_{i}\injc^{-1} \simeq \Tor_{i}(Tr(\mathscr{I}, \prescript{}{\Lambda}{\Lambda}),\blank)
 \]
 for all integers $i$. \qed}
\end{theorem}

\textcolor{purple}{The short exact sequence $0 \lra \injc^{-1} \lra \mathbf{1} \lra \injc \lra 0$ shows, since $\injc$ is finitely presented, that  $\injc^{-1}$ is finitely presented too. To compute its defect we recall that~$w$ is an exact contravariant functor. Applying it to the above sequence we have, in view of Proposition~\ref{P:w(q)=0},}

\begin{proposition}
 \textcolor{purple}{Without any assumptions on $\Lambda$, $w(\injc^{-1}) \simeq 
 \prescript{}{\Lambda}{\Lambda}$. \qed}
\end{proposition}

\textcolor{purple}{To determine the right derived functors of $\injc$ and $\injc^{-1}$, we don't need any assumptions on $\Lambda$. Since injectives are cotorsion-free, we have}

\begin{proposition}
 \textcolor{purple}{Without any assumptions on $\Lambda$, $R^{i}\injc = 0$ for all integers $i$. \qed}
\end{proposition}

\textcolor{purple}{Since $\injc^{-1}(J) \lra J$ is an isomorphism for any injective $J$, we have} 

\begin{proposition}
 \textcolor{purple}{Without any assumptions on $\Lambda$, $R^{0}\injc^{-1} = \mathbf{1}$ and $R^{i}\injc^{-1} = 0$ for any integer $i \geq 1$. \qed}
\end{proposition}

\textcolor{purple}{By now the reader has undoubtedly noticed a rather intriguing parallelism between the properties of torsion and cotorsion. In the case of an FPE ring, we can add one more observation.}

\begin{theorem}
\textcolor{purple}{If $\Lambda$ is left FPE}, then the notions of torsion and cotorsion are dual.  More precisely, the right adjoint
\[
D_R:(\mod(\Lambda),\ab) \lra \fp(\Mod(\Lambda^{op}),\ab)
\]
of $D_A$ carries the injective torsion functor on right modules to the cotorsion functor on left modules, i.e., 
\[
D_R(\injt) \simeq \injc.
\]
\end{theorem}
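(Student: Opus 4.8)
The plan is to exploit that $D_A$ is exact, that $D_R$ is its fully faithful right adjoint, and that on pp-functors $D_R$ restricts to the Auslander-Gruson-Jensen duality $D$, which interchanges $(X,\blank)$ and $\blank\otimes X$ for finitely presented $X$. The hypothesis is that there is an injective left module $I$ which is finitely presented together with a monomorphism $\iota:{}_{\Lambda}\!\Lambda\to I$; we may take $I$ to be an injective envelope-type container, and the cosyzygy $\Sigma\Lambda$ is then finitely presented as well, being the cokernel of a map between finitely presented modules.

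First I would record the two presentations in play. On the one hand, by Theorem~\ref{cotorthm} (with $B:=\Lambda$) and Proposition~\ref{P:s-comm-colim}, $\injt=\blank\ot\Lambda$ is a pp-functor, defined by the exact sequence $0\to\injt\to\blank\otimes\Lambda\to\blank\otimes I$, i.e.\ it sits in $\fp(\mod(\Lambda),\ab)$ because both $\Lambda$ and $I$ are finitely presented. On the other hand, the cotorsion functor on left modules is $\injc=\overline{\Hom}(\Lambda,\blank)$, which by the Remark preceding Theorem~\ref{cotorthm} is finitely presented with presentation $(I,\blank)\to(\Lambda,\blank)\to\overline{\Hom}(\Lambda,\blank)\to 0$ obtained from the same cosyzygy sequence; since $\Lambda$ and $I$ are finitely presented, this is a pp-functor in $\fp(\Mod(\Lambda^{op}),\ab)$.

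Next I would apply $D_R$ to the defining four-term exact sequence of $\injt$ in $(\mod(\Lambda),\ab)$,
\[
0\to\injt\to\blank\otimes\Lambda\to\blank\otimes I\to Q\to 0,
\]
where $Q$ is the cokernel. Because $D_R$ restricted to pp-functors is the exact duality $D$ carrying $\blank\otimes X$ to $(X,\blank)$ for finitely presented $X$ (and being exact on the pp-subcategory), applying $D_R$ turns this into
\[
0\to D_R(Q)\to (I,\blank)\to (\Lambda,\blank)\to D_R(\injt)\to 0.
\]
Comparing with the presentation $(I,\blank)\to(\Lambda,\blank)\to\injc\to 0$ of $\injc$, and checking that the middle map $(I,\blank)\to(\Lambda,\blank)$ produced by $D_R$ is (up to the canonical identifications) precisely $(\iota,\blank)$ — this follows from the naturality/uniqueness in the characterization of $D$ on representables, since $D$ sends $\blank\otimes\iota$ to $(\iota,\blank)$ — we conclude $D_R(\injt)\simeq\Coker\,(\iota,\blank)=\overline{\Hom}(\Lambda,\blank)=\injc$.

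The main obstacle, and the step requiring care, is the identification of $D_R$ with the duality $D$ on the relevant functors and the verification that all objects involved genuinely lie in the pp-subcategory where $D_R$ is exact. The finite presentability of $I$ is exactly what guarantees $\blank\otimes I$ and $(I,\blank)$ are pp-functors, so that the four-term sequence lives entirely in the abelian pp-subcategory on which $D_R=D$; without it, $D_R$ need only be a right adjoint and need not be exact or agree with $D$. I would also need to confirm that applying $D_R$ to a finite presentation yields a finite copresentation with the expected representables — this is precisely the content of property~(1)/(2) of the Auslander-Gruson-Jensen duality together with its exactness on pp-functors — and that the connecting identifications $D(\blank\otimes X)\simeq(X,\blank)$ are natural enough to match the specific map $\blank\otimes\iota$ with $(\iota,\blank)$. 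Once that bookkeeping is in place, the isomorphism $D_R(\injt)\simeq\injc$ drops out of a diagram chase comparing the two four-term exact sequences.
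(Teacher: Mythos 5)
Your proposal is correct and follows essentially the same route as the paper: apply $D_R$, which is exact on the pp-subcategory and coincides there with the Auslander--Gruson--Jensen duality $D$, to the defining exact sequence $0\to\injt\to\blank\otimes\Lambda\to\blank\otimes I$, identify $D_R(\blank\otimes X)\simeq(X,\blank)$ for finitely presented $X$, and recognize the resulting cokernel as $\overline{\Hom}(\Lambda,\blank)=\injc$. The only cosmetic difference is that you extend to a four-term sequence with a cokernel $Q$ and explicitly flag the naturality check that the induced map $(I,\blank)\to(\Lambda,\blank)$ is $(\iota,\blank)$, which the paper's shorter proof leaves implicit.
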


\begin{proof}\textcolor{purple}{We claim that each term in the defining exact sequence 
\[
0 \lra \injt \lra \blank\otimes \Lambda \lra \blank\otimes I
\]
belongs to $\fp(\mod(\Lambda),\ab)$. This is clear for the last two terms because both~$\Lambda$ and~$I$ are finitely presented. To check this for $\injt$, we look at the full additive subcategory $\mathscr{P}$ of 
$\fp(\mod(\Lambda),\ab)$ determined by the representable functors. Given a natural transformation $(M_{2}, \blank) \to (M_{3}, \blank)$ with the $M_{i}$ finitely presented modules, there is, by Yoneda's lemma, a morphism $f : M_{3} \to M_{2}$ that determines that transformation. Since the $M_{i}$ are finitely presented, so is the cokernel $M_{1}$ of $f$. By the left-exactness of Hom, we have an exact sequence 
\[
0 \lra (M_{1}, \blank) \lra (M_{2}, \blank) \lra (M_{3}, \blank),
\]
which shows that $\mathscr{P}$ is closed under kernels. It now follows from~\cite[Proposition 2.1, b)]{A66} that the same is true for 
$\fp(\mod(\Lambda),\ab)$. This implies that $\injt$ belongs to this category, as was claimed. Since $\fp(\mod(\Lambda),\ab)$ is obviously closed under cokernels, it is abelian. As $D_R$ restricts to the AGJ duality there, that restriction must be exact, and therefore} we have an exact sequence 
\[
D_R(\blank\otimes I) \lra D_R(\blank\otimes\Lambda) \lra D_R(\injt) \lra 0.
\]
Because $I$ and $\Lambda$ are finitely presented modules, this yields the exact sequence 
\[
(I,\blank) \lra (\Lambda, \blank) \lra D_R(\injt) \lra 0,
\] 
which shows that $D_R(\injt) \simeq \injc$.
\end{proof}

\begin{corollary}
Let $\Lambda$ be an artin algebra. Then $D_R(\injt) \simeq \injc$, where $\injt$ and $\injc$ have arbitrary opposite chiralities.  \qed
\end{corollary}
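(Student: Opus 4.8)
The plan is to deduce this corollary directly from the preceding theorem, so the only real task is to check that an artin algebra $\Lambda$ satisfies that theorem's hypothesis: the existence of a finitely presented injective left module $I$ together with a monomorphism $0\to {}_{\Lambda}\Lambda\to I$. Here I would invoke the standard Matlis duality $D$ between finitely generated right and left $\Lambda$-modules (for an artin $R$-algebra, $D=\Hom_{R}(\blank,E)$ with $E$ a minimal injective cogenerator of $\mathrm{mod}\,R$). Since $D$ is a duality carrying projectives to injectives and preserving finite generation, $D({}_{\Lambda}\Lambda_{\Lambda})$ is a finitely generated injective left $\Lambda$-module which is moreover an injective cogenerator. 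Because $\Lambda$ is noetherian, finitely generated implies finitely presented, and because $D(\Lambda_{\Lambda})$ is a cogenerator, ${}_{\Lambda}\Lambda$ embeds into a finite direct sum of copies of it; that finite direct sum is again finitely presented and injective. This produces the required $I$. (Equivalently, one may simply cite the classical fact that over an artin algebra every finitely generated module has a finitely generated injective envelope, and then pass to finite presentation by noetherianness.)

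With the hypothesis verified, the preceding theorem yields $D_{R}(\injt)\simeq\injc$, where $\injt$ is the injective torsion functor on right modules and $\injc$ the cotorsion functor on left modules. To obtain the statement for \emph{arbitrary opposite chiralities}, I would observe that $\Lambda^{op}$ is again an artin algebra, so the identical argument, with the roles of left and right interchanged throughout, gives $D_{R}(\injt)\simeq\injc$ with $\injt$ on left modules and $\injc$ on right modules. Combining the two cases gives the symmetric assertion.

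I do not expect a genuine obstacle: the substantive content is entirely contained in the theorem already proved, and the step of exhibiting a finitely presented injective overmodule of ${}_{\Lambda}\Lambda$ is classical for artin algebras. The mild point to be careful about is bookkeeping of sides, i.e., making sure that the duality $D$ and the hypothesis of the theorem are applied with consistent chirality, and that the reduction to $\Lambda^{op}$ is legitimate — which it is, since the class of artin algebras is closed under passage to the opposite ring.
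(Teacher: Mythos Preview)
Your proposal is correct and is precisely the intended argument: the paper marks the corollary with \qed, leaving it as an immediate consequence of the preceding theorem once one knows that an artin algebra admits a finitely presented injective containing ${}_{\Lambda}\Lambda$, and your verification of this hypothesis via Matlis duality (and the passage to $\Lambda^{op}$ for the opposite chirality) is exactly what is implicitly being invoked.
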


\textcolor{purple}{Finally, we want to show that the FPE assumption allows to produce an analog for $\injc$ of Proposition~\ref{P:s-of-pex-is-ex} for $\injt$.}

\begin{proposition}
\textcolor{purple}{Suppose that $\prescript{}{\Lambda}{\Lambda}$ is left FPE  and let $0 \to C' \to C \to C'' \to 0$ be a pure exact sequence of left 
$\Lambda$-modules. Then the induced sequence 
\[
0 \lra \injc(C') \lra \injc(C) \lra \injc(C'') \lra 0
\]
is exact.} 
\end{proposition}

\begin{proof}
\textcolor{purple}{Since $I$ is finitely presented, we have a finite presentation $(I, \blank) \to (\Lambda, \blank) \to  \injc \to 0$ of $\injc$ with finitely presented arguments, which shows that $\injc$ preserves filtered colimits. A pure exact sequence is a directed colimit of split exact sequences. Thus, the sequence in question is a filtered colimit of similar sequences arising from split exact sequences. Each such sequence is exact because~$\injc$ is an additive functor. The result now follows since filtered colimits preserve exactness.}
\smallskip
\newline\textit{Second Proof.}
\textcolor{purple}{Assuming again that $I$ is finitely presented, we have that all terms in the short exact sequence $0 \to \Lambda \to I \to \Sigma \Lambda \to 0$ are finitely presented. Therefore, mapping each term into the given pure exact sequence, we have three exact sequences (\cite[Theorem 4.89]{Lam-99}) which assemble into a four-row and three-column diagram whose fourth row consists of the induced maps on the cotorsion modules. By the left-exactness of Hom, the columns are also exact and the desired result now follows from the snake lemma.}
\end{proof}

\begin{corollary}
\textcolor{purple}{If $\Lambda$ is left FPE, then the class of cotorsion-free left $\Lambda$-modules is closed under pure extensions. \qed}
\end{corollary}

\end{document}